\documentclass[a4paper,10pt]{article}
\usepackage{geometry}
\usepackage{epsfig}
\usepackage{amsmath}
\usepackage{amssymb}
\usepackage{amsthm}
\usepackage{mathrsfs}
\usepackage{color}
\usepackage{amscd}
\usepackage{euscript}
\usepackage{ stmaryrd }
\usepackage{afterpage}
\usepackage{authblk}
\usepackage{enumitem}
\usepackage{mathtools}

\usepackage{ stmaryrd }
\usepackage{mathabx}
\usepackage{multicol}

\theoremstyle{plain} 
\newtheorem{thm}{Theorem}[section] 
 
\newtheorem{lem}[thm]{Lemma} 
\newtheorem{prop}[thm]{Proposition} 
\theoremstyle{definition} 
\newtheorem{defn}[thm]{Definition}
\newtheorem{rmk}[thm]{Remark}

\newtheorem{ex}[thm]{Example}

\DeclareFontEncoding{LS1}{}{}
\DeclareFontSubstitution{LS1}{stix}{m}{n}
\DeclareSymbolFont{symbols2}{LS1}{stixfrak} {m} {n}
\DeclareMathSymbol{\operp}{\mathbin}{symbols2}{"A8}
\DeclareMathOperator{\spn}{span}
\DeclareMathOperator{\rk}{rank}
\usepackage{tikz}
\usepackage{tikz-cd}
\usepackage{diagbox} % Per dividere le celle in diagonale
\usepackage{booktabs}
\usepackage{makecell}

\title{{\bf Dual slice functions and dual quaternionic polynomials}}

\author{Giulio Binosi\\
\small Istituto Nazionale di Alta Matematica\\
\small Unit\`a di Ricerca di Firenze c/o DiMaI ``U. Dini'' Universit\`a di Firenze\\
\small Viale Morgagni 67/A, I-50134 Firenze, Italy\\
\small ORCID 0000-0002-4733-6180\\
\small binosi@altamatematica.it

\and
Caterina Stoppato\\
\small DiMaI ``U. Dini'' Universit\`a di Firenze\\
\small Viale Morgagni 67/A, I-50134 Firenze, Italy\\
\small ORCID 0000-0001-9859-6559\\
\small caterina.stoppato@unifi.it}

\date{ }%{\small
\begin{document}

\maketitle

\begin{abstract}
    This work constructs a new function class over the algebra of dual quaternions: dual slice functions, which include one-sided dual quaternionic polynomials. The peculiar properties of dual slice functions therefore provide new tools to understand factorization of dual quaternionic polynomials, which was extensively studied in recent years for its applications to mechanism science. Dual slice functions are strictly related to the well-established theory of slice and slice regular functions over quaternions and dual quaternions.
\end{abstract}

%%%%%%%%%%%%%%%%%%%%%%%%%%%%%

\section{Introduction}\label{sec:introduction}

Let $\mathbb{H}$ be the real algebra of quaternions, namely the $4$-dimensional vector space $\mathbb{R}^4$, endowed with the associative and non commutative product acting on the standard basis $\{1,i,j,k\}$ as follows: $1$ is the identity element of the algebra and $i^2=j^2=k^2=ijk=-1$. We can embed $\mathbb{R}\subset\mathbb{H}$ as the vector subspace generated by $1$, which is the center of the algebra $\mathbb{H}$. We equip $\mathbb{H}$ with an involutive antiautomorphism $\mathbb{H}\to\mathbb{H},\ x\mapsto x^c $, where $(r_0+r_1i+r_2j+r_3k)^c=r_0-r_1i-r_2j-r_3k$. This makes $\mathbb{H}$ a real $^\ast$-algebra. We also consider the $8$-dimensional associative algebra of dual quaternions $\mathbb{DH}=\mathbb{H}+\epsilon\mathbb{H}=\{x_1+\epsilon x_2:x_1,x_2\in\mathbb{H}\}$, where, for any two dual quaternions $x=x_1+\epsilon x_2$ and $y=y_1+\epsilon y_2$, we set
\begin{equation*}
    x+y=x_1+y_1+\epsilon(x_2+y_2);\qquad
    xy=x_1y_1+\epsilon(x_1y_2+x_2y_1).
\end{equation*}
In particular, $\epsilon^2=0$ and the center of $\mathbb{DH}$ is the set $\mathbb{DR}\coloneqq\mathbb{R}+\epsilon\mathbb{R}$ of \emph{dual numbers}. The elements of $\mathbb{DC}\coloneqq\mathbb{C}+\epsilon\mathbb{C}$ will be called  \emph{dual complex numbers}. $\mathbb{DH}$ is a $^\ast$-algebra, endowed with the involutive antiautomorphism $\mathbb{DH}\to\mathbb{DH},\ x\mapsto x^c$, defined as $(x_1+\epsilon x_2)^c\coloneqq x_1^c+\epsilon x_2^c$, for all $x_1,x_2\in\mathbb{H}$. We point out that this involution fixes every point of $\mathbb{DR}$. We also point out that other notations, such as $\bar x$ or $x^*$, are used instead of $x^c$ in part of the existing literature.

For any real $\ast$-algebra $A$ and any $x\in A$, the work~\cite{perotti} set the notations
\begin{equation*}
    t(x)\coloneqq x+x^c,\qquad n(x)=xx^c.
\end{equation*}
Clearly, $t$ is $\mathbb{R}$-linear and $n$ is a homogeneous polynomial map of degree two in the real coordinates of $x$. In the special case when $A=\mathbb{H}$, for all $x_1\in\mathbb{H}$ we have $t(x_1),n(x_1)\in\mathbb{R}$. When $A=\mathbb{DH}$, for all $x=x_1+\epsilon x_2\in\mathbb{DH},a\in\mathbb{DR}$ we have $t(x)=t(x_1)+\epsilon t(x_2)\in\mathbb{DR}$, $n(x)=n(x_1)+\epsilon t(x_1x_2^c)\in\mathbb{DR}$, as well as $t(a)=2a$ and $n(a)=a^2$. When $A$ is associative,~\cite[Remark 1.8]{gpsalgebra} guarantees that $n(xy)=n(x)n(y)=n(y)n(x)=n(yx)$ for all $x,y\in A$.  When $A=\mathbb{DH}$, we also see that $t(ax)=ax+x^ca^c=ax+ax^c=at(x), n(ax)=n(a)n(x)=a^2n(x)$ and $n(x^c)=n(x)$ for all $x\in\mathbb{DH},a\in\mathbb{DR}$. Now fix $x\in A$. The element $x$ is invertible, with inverse $x^{-1}=n(x)^{-1}x^c$, if, and only if, $n(x),n(x^c)$ are invertible elements of $A$. This happens for all $x\in\mathbb{H}^*\coloneqq \mathbb{H}\setminus\{0\}$, whence $\mathbb{H}$ is a division algebra. In $A=\mathbb{DH}$ we have instead $n^{-1}(0)=\epsilon\mathbb{H}$ and $\epsilon\mathbb{H}^*$ is the set of zero divisors in $\mathbb{DH}$ (see~\cite[Example 1.13]{gpsalgebra}). On the other hand, $\mathbb{DH}\setminus\epsilon\mathbb{H}$ is a multiplicative group where every $x=x_1+\epsilon x_2$ has $x^{-1}=x_1^{-1}-\epsilon x_1^{-1}x_2x_1^{-1}$. Within any real $\ast$-algebra $A$, the \emph{quadratic cone} is defined as
\begin{equation*}
     Q_A\coloneqq\mathbb{R}\cup\{x\in A: t(x),n(x)\in\mathbb{R}, 4n(x)>t^2(x)\}\,.
\end{equation*}
On the one hand, $Q_\mathbb{H}=\mathbb{H}$. On the other hand, if we represent a dual quaternion $x$ in real components as $r_0+ir_1+jr_2+kr_3+\epsilon(r_4+ir_5+jr_6+kr_7)$, we find that $Q_\mathbb{DH}$ is contained in the \emph{Study quadric} $\mathscr{S}^7\coloneqq \{r_0r_4+r_1r_5+r_2r_6+r_3r_7=0\}$. More precisely, considering the hyperplane $\mathscr{H}:r_4=0$, we have
\begin{equation*}
    \mathscr{S}^7\cap\mathscr{H} = Q_\mathbb{DH}\cup(\mathbb{R}+\epsilon\operatorname{Im}(\mathbb{H})).
\end{equation*}
Belonging to this set is known in literature as the \emph{Study condition}. In particular, $Q_\mathbb{DH}$ is a $6$-dimensional semialgebraic subset of the $8$-dimensional real vector space $\mathbb{DH}$. More details about $\mathbb{H}$ and $\mathbb{DH}$ will be provided in the forthcoming Subsection~\ref{subsec:dualquaternions}.

Dual quaternions are commonly used, see~\cite{Daniilidis,selig}, for an alternate representation of the group $SE(3)$ of proper rigid body transformations of $\mathbb{R}^3$. Firstly, $\mathbb{R}^3$ can be identified with the real affine subspace $1+\epsilon\operatorname{Im}(\mathbb{H})$ of $\mathbb{DH}$. Secondly, the multiplicative subgroup $G:=\{h\in\mathbb{DH}\,:\,n(h)\in\mathbb{R}^*\}$ acts on the aforementioned affine subspace as follows:
\begin{equation}\label{eq:rigidbodytrasnformation}
1+\epsilon p_2 \longmapsto \frac{h\,(1+\epsilon p_2)\,\tilde h}{n(h)} =1+\epsilon(h_1p_2h_1^{-1}+2h_2h_1^{-1})
\end{equation}
where $\tilde h=h_1^c-\epsilon h_2^c$. In particular: translation by a vector $2h_2h_1^{-1}$ is obtained when $h\in \mathbb{R}+\epsilon\operatorname{Im}(\mathbb{H})$; rotation by angle $\theta$ (with $\cos(\frac\theta2)=\frac{\operatorname{Re}(h_1)}{|h_1|}$) about an axis with Pl\"ucker coordinates $\big(\frac{\operatorname{Im}(h_1)}{|\operatorname{Im}(h_1)|},\frac{h_2}{|\operatorname{Im}(h_1)|}\big)$ is obtained when $h\in Q_{\mathbb{DH}}\setminus\mathbb{R}$. Overall, the nested subgroups $\{h_1\in\mathbb{H} : n(h_1)=1\}$ and $\{h\in\mathbb{DH} : n(h)=1\}$ are double coverings of $SO(3)$ and $SE(3)$, respectively. More details can be found in the cited references and in~\cite{gstdualquaternions}.

The article \cite{hegedus} singled out, among dual quaternionic polynomials, the  special subclass of \emph{motion polynomials} $P(x)$ (see the forthcoming Definition~\ref{def:motionpolynomial}). For such a polynomial, the inclusion $P(t)\in G$ holds true for all $t\in\mathbb{R}$. Choosing $h=P(t)$ in~\eqref{eq:rigidbodytrasnformation} gives rigid body motions whose trajectories are rational curves. A linear monic polynomial $P(x)=x-h^{(1)}$ is a motion polynomial if, and only if, $h^{(1)}$ fulfills the Study condition. In particular: if $h^{(1)}$ belongs to $\mathbb{R}+\epsilon\operatorname{Im}\mathbb{H}$, then so does $P(t)=t-h^{(1)}$ for all $t\in\mathbb{R}$; if $h^{(1)}$ belongs to $Q_{\mathbb{DH}}$ then so does $P(t)=t-h^{(1)}$ for all $t\in\mathbb{R}$. The former setup may be used to parametrize a motion of a linkage with one prismatic joint; the latter setup, where the rotation axis is constant in $t$ but the angle varies with $t$, may be used to parametrize a motion of a linkage with one revolute joint. A product of linear monic polynomials
\[P(x)=(x-h^{(1)})\cdot\ldots\cdot(x-h^{(n)})\]
is a motion polynomial if $h^{(1)},\ldots,h^{(n)}$ all fulfill the Study condition. If $s$ of them belong to $\mathbb{R}+\epsilon\operatorname{Im}\mathbb{H}$ and $n-s$ belong to $Q_{\mathbb{DH}}$, we parametrize a motion of a linkage with $s$ prismatic joints and $n-s$ revolute joints. Any alternate factorization of the motion polynomial $P(x)$ into motion linear factors corresponds to another linkage producing the same exact motion. This motivated studying possible factorizations of dual quaternionic polynomials into linear factors $x-h^{(\ell)}$ with $h^{(\ell)}$ fulfilling the Study condition, see~\cite{gstdualquaternions,lischichoschrocker1,lischichoschrocker2,lischroeckerskopenkovscharler,schicho,siegelescharlerschroecker}. Other works, such as~\cite{pfurnerschroeckerhusty,siegelepfurnerschroecker} addressed the problem of factorization of dual quaternionic polynomials without the Study condition. Factorization (or even existence of zeros) is not guaranteed for dual quaternionic polynomials; not even for motion polynomials.

Both for the aforementioned applications and for an intrinsic interest, we became curious to understand zeros and factorizations of dual quaternionic polynomials in full detail. To better study dual quaternionic polynomials, we embedded them into the newly-defined class of \emph{dual slice functions} over dual quaternions. The construction and study of this class turned out to be very articulate and provided the material for the present article. The study of the zero sets of dual slice functions and polynomials over dual quaternions is postponed to a forthcoming paper, along with the related problem of factorization. Dual slice functions extend \emph{slice functions}, defined in~\cite{perotti} on open subsets $\Omega_D$ of the quadratic cone $Q_\mathbb{DH}$, to domains that are open subsets of $\mathbb{DH}$. In this setting, the special subclass of \emph{dual slice regular functions} is obtained by extending \emph{slice regular functions}. Quaternionic slice regular functions are presented in detail in the monograph~\cite{librospringer2}. Slice and slice regular functions over general alternative $\ast$-algebras have been defined in~\cite{perotti} and studied in subsequent literature, including~\cite{gpsalgebra}. The special case of dual quaternions was addressed in~\cite{gstdualquaternions}.

The present work is structured as follows. We start with preliminaries, which comprise: details about quaternions and dual quaternions in Subsection~\ref{subsec:dualquaternions}; the definition of \emph{stem functions} on open subsets $D$ of $\mathbb{R}_\mathbb{C}$ and the related construction of slice functions on $\Omega_D$, in Subsection~\ref{subsec:slicefunctions}; details about quaternionic slice functions in Subsection~\ref{subsec:quaternionicslicefunctions}; details about dual quaternionic slice functions in Subsection~\ref{subsec:dualquatenionicslicefunctions}. Section~\ref{sec:beyond} is devoted to a decomposition of the whole space $\mathbb{DH}$ into ``dual complex slices'', analogous to the standard decomposition of the quadratic cone $Q_A$ of any alternative $\ast$-algebra $A$ into ``complex slices''. This section also sets up appropriate open subsets $\widecheck{\Omega}_D$ of $\mathbb{DH}$, with $\widecheck{\Omega}_D\cap Q_\mathbb{DH}=\Omega_D$. Section~\ref{sec:dualstem} is devoted to natural extensions of stem functions on $D$ to $\check{D}\coloneqq D+\epsilon\mathbb{R}_\mathbb{C}\subset\mathbb{DR}_\mathbb{C}$. This gives rise to the class of \emph{dual stem functions} $\check{D}\to\mathbb{DH}_\mathbb{C}$. Section~\ref{sec:dualslice} defines the class $\mathcal{DS}(\widecheck{\Omega}_D)$ of dual slice functions $\widecheck{\Omega}_D\to\mathbb{DH}$, based on dual stem functions on $\check{D}$. Section~\ref{sec:dualslicealgebra} endows the class $\mathcal{DS}(\widecheck{\Omega}_D)$ with a $\ast$-algebra structure and a right $\mathbb{DH}$-module structure. It also shows that restriction to $\Omega_D$ is a $\ast$-algebra isomorphism (and a right $\mathbb{DH}$-module isomorphism) from $\mathcal{DS}(\widecheck{\Omega}_D)$ to the class $\mathcal{S}(\Omega_D)$ of slice functions on $\Omega_D$. Section~\ref{sec:dualsliceexpression} proves an important result, namely Theorem~\ref{thm:dualslice}: for every $\check{f}\in\mathcal{DS}(\widecheck{\Omega}_D)$, if we set $f\coloneqq\check{f}_{|_{\Omega_D}}$ and let $^\pi\!f$ denote the projection onto $\mathbb{H}$ of $\check{f}_{|_{\Omega_D\cap\mathbb{H}}}$, then
\[\check{f}(x)= f(x_1)+\epsilon \,d\left(^\pi f\right)_{x_1}\!(x_2)\,,\]
for all $x=x_1+\epsilon x_2\in\widecheck{\Omega}_D$ with $x_1\not\in\mathbb{R}$. The last condition can be removed under mild extra hypotheses on $\check{f}$. Thus, Theorem~\ref{thm:dualslice} states that, for $x_1$ fixed, the map $\mathbb{H}\to\mathbb{DH},\ x_2\mapsto\check{f}(x_1+\epsilon x_2)$ is a real affine map whose value at $0$ equals $f(x_1)$ and whose real differential at $0$ acts exactly like $\epsilon \,d\left(^\pi f\right)_{x_1}$. Section~\ref{sec:dualslicepolynomials} establishes another important result, namely Theorem~\ref{thm:dualslicepolynomials}. Among other things, this theorem states that $\mathcal{DS}(\mathbb{DH})$ includes, as a $\ast$-subalgebra and a right $\mathbb{DH}$-submodule, the class $\mathbb{DH}[x]$ of dual quaternionic polynomials. In particular, the peculiar expression of $\check{f}$ achieved in the main Theorem~\ref{thm:dualslice} applies to all dual quaternionic polynomials. This fact, along with other properties of dual slice functions, paves the way for our forthcoming study of zeros and factorizations of dual quaternionic polynomials.

%%%%%%%%%%%%%%%%%%%%%%%%%%%%%

\section{Preliminaries}
\subsection{Quaternions and dual quaternions}\label{subsec:dualquaternions}

Let $A$ be either $\mathbb{H}$ or $\mathbb{DH}$. The set of \emph{imaginary units} of $A$ is defined as
\begin{equation*}
    \mathbb{S}_A\coloneqq\{x\in A: t(x)=0,n(x)=1\}
\end{equation*}
and equals $\{x\in A: x^2=-1\}$ according to \cite[Remark 2.6]{gstdualquaternions}. The quadratic cone can be expressed as
\begin{equation}\label{eq:decompositionofcone}
     Q_A=\bigcup_{J\in\mathbb{S}_A}\mathbb{C}_J,
\end{equation}
where $\mathbb{C}_J\coloneqq\spn(1,J)$ is a $\ast$-subalgebra of $A$ isomorphic to the complex field $\mathbb{C}$. In other words, every element $x\in Q_A$ can be expressed as $x=\alpha+J\beta$ for some $\alpha,\beta\in\mathbb{R}$ and some $J\in\mathbb{S}_A$. For this specific $x$, we set $\operatorname{Re}(x)\coloneqq \alpha=t(x)/2$ and $\operatorname{Im}(x)\coloneqq J\beta=x-\operatorname{Re}(x)$. When $A=\mathbb{H}$ we find that $t$ is twice the projection along the real axis and $n$ is the squared Euclidean norm, whence 
\begin{equation*}
    \mathbb{S}_\mathbb{H}=\{a_1i+a_2j+a_3k:a_1^2+a_2^2+a_3^2=1\}\cong\mathbb{S}^2,\qquad  Q_\mathbb{H}=\mathbb{H},\qquad\operatorname{Im}(\mathbb{H})=\spn(i,j,k).
\end{equation*}
as well as $\operatorname{Re}(a_0+a_1i+a_2j+a_3k)=a_0$, $\operatorname{Im}(a_0+a_1i+a_2j+a_3k)=a_1i+a_2j+a_3k$.
When $A=\mathbb{DH}$, we have $t(x_1+\epsilon x_2)=2\operatorname{Re}(x_1)+2\epsilon\operatorname{Re}(x_2)\in\mathbb{DR},n(x_1+\epsilon x_2)=|x_1|^2+2\epsilon\langle x_1,x_2\rangle\in\mathbb{DR}$, whence
\begin{equation*}
    \begin{split}
        \mathbb{S}_\mathbb{DH}&=\{J_1+\epsilon J_2:J_1\in\mathbb{S}_\mathbb{H}, J_2\in\operatorname{Im}(\mathbb{H}), J_1\perp J_2\}=\bigcup_{J_1\in\mathbb{S}_\mathbb{H}}T_{J_1},\\
         Q_\mathbb{DH}&=\mathbb{R}\cup\{x_1+\epsilon x_2: x_1\in\mathbb{H}\setminus\mathbb{R},x_2\in\operatorname{Im}(\mathbb{H}), x_1\perp x_2\}=\mathbb{R}\cup \bigcup_{x_1\in\mathbb{H}\setminus\mathbb{R}}T_{x_1},\\
         \operatorname{Im}(Q_\mathbb{DH})&=\{x_1+\epsilon x_2:x_1,x_2\in\operatorname{Im}(\mathbb{H}),x_1\perp x_2\}=\bigcup_{x_1\in\operatorname{Im}(\mathbb{H})}T_{x_1},
    \end{split}
\end{equation*}
where $T_{x_1}$ is the affine $2$-plane $x_1+\epsilon\{x_2\in\operatorname{Im}(\mathbb{H}):x_1\perp x_2\}$, for any $x_1\in\mathbb{H}\setminus\mathbb{R}$. Later in our work, we will also use the affine $4$-space $H_{x_1}\coloneqq x_1+\epsilon\mathbb{H}$ for $x_1\in\mathbb{H}$, whose intersection with $Q_\mathbb{DH}$ is exactly $T_{x_1}$ if $x_1\notin\mathbb{R}$. Moreover, for all $x_1+\epsilon x_2\in Q_{\mathbb{DH}}$, 
\begin{equation}\label{eq:realandimaginarypartdq}
    \operatorname{Re}_\mathbb{DH}(x_1+\epsilon x_2)=\operatorname{Re}_\mathbb{H}(x_1),\qquad \operatorname{Im}_\mathbb{DH}(x_1+\epsilon x_2)=\operatorname{Im}_\mathbb{H}(x_1)+\epsilon x_2.
\end{equation}
In particular, given $x_1,x_2\in\mathbb{H}$ and assuming $x_1=\alpha+J_1\beta$ with $\alpha,\beta\in\mathbb{R}$ and $J_1\in\mathbb{S}_\mathbb{H}$, if the dual quaternion $x_1+\epsilon x_2$ belongs to $Q_{\mathbb{DH}}$, then $x_1+\epsilon x_2=\alpha+J\beta$, where $J=J_1+\epsilon x_2\beta^{-1}$. As a consequence, $T_{x_1}\subset\mathbb{DH}$ is the tangent plane to the $2$-sphere $\alpha+\mathbb{S}_\mathbb{H}\beta$ at $x_1$. Moreover, if $x_1\in\mathbb{C}_{J_1}\setminus\mathbb{R}$, for some $J_1\in\mathbb{S}_\mathbb{H}$ then $T_{x_1}=x_1+\epsilon\mathbb{C}_{J_1}^\perp$, where $\mathbb{C}_{J_1}^\perp$ is the orthogonal complement in $\mathbb{H}$ of the plane $\mathbb{C}_{J_1}$.

We will need the following notations.

\begin{defn}
    We define $A[x]\coloneqq\left\{\sum_{\ell=0}^nx^\ell a_\ell:n\in\mathbb{N},\{a_\ell\}_{\ell=0}^n\subset A\right\}$ and set
       \begin{align*}
    &\left(\sum_{\ell'=0}^{\phantom{'}n'}x^{\ell'}a_{\ell'}\right)\cdot\left(\sum_{\ell''=0}^{\phantom{''}n''}x^{\ell''}b_{\ell''}\right)\coloneqq\sum_{\ell=0}^{n'+n''}x^\ell\sum_{s=0}^\ell a_sb_{\ell-s}\,,\\
        &\left(\sum_{\ell'=0}^{\phantom{'}n'}x^{\ell'}a_{\ell'}\right)^c\coloneqq\sum_{\ell'=0}^{\phantom{'}n'}x^{\ell'}a_{\ell'}^c\,,\\
        &N\left(\sum_{\ell'=0}^{\phantom{'}n'}x^{\ell'}a_{\ell' }\right)\coloneqq\left(\sum_{\ell'=0}^{\phantom{'}n'}x^{\ell'}a_{\ell'}\right)\cdot\left(\sum_{\ell'=0}^{\phantom{'}n'}x^{\ell'}a_{\ell'}\right)^c=\sum_{\ell=0}^{2n'}x^\ell c_\ell,\quad c_\ell\coloneqq\sum_{s=0}^\ell a_{s}a_{\ell-s}^c\,,
    \end{align*}
    whence $c_{2u}=n(a_u)+\sum_{s=0}^{u-1}t(a_sa_{2u-s}^c)$ and $c_{2u+1}=\sum_{s=0}^u t(a_sa_{2u-s+1}^c)$. 
    
    We also define $A[x,x^c]\coloneqq\left\{\sum_{\ell,m=0}^nx^\ell(x^c)^ma_{\ell m}:n\in\mathbb{N},\{a_{\ell m}\}_{\ell,m=0}^n\subset A\right\}$ and set
    \begin{align*}
        &\left(\sum_{\ell',m'=0}^{\phantom{'}n'}x^{\ell'}(x^c)^{m'}a_{\ell' m'}\right)\cdot\left(\sum_{\ell'',m''=0}^{\phantom{''}n''}x^{\ell''}(x^c)^{m''}b_{\ell'' m''}\right)\coloneqq\sum_{\ell,m=0}^{n'+n''}x^\ell(x^c)^m\sum_{s=0}^\ell\sum_{t=0}^m a_{st}b_{\ell-s,m-t}\,,\\
        &\left(\sum_{\ell',m'=0}^{\phantom{'}n'}x^{\ell'}(x^c)^{m'}a_{\ell' m'}\right)^c\coloneqq\sum_{\ell',m'=0}^{\phantom{'}n'}x^{\ell'}(x^c)^{m'}a_{\ell' m'}^c\,,\\
        &N\left(\sum_{\ell',m'=0}^{\phantom{'}n'}x^{\ell'}(x^c)^{m'}a_{\ell' m'}\right)\coloneqq\sum_{\ell,m=0}^{2n'}x^\ell(x^c)^m\sum_{s=0}^\ell\sum_{t=0}^m a_{st}a_{\ell-s,m-t}^c.
    \end{align*}
\end{defn}

By direct inspection, $A[x]$ and $A[x,x^c]$ are nested real $\ast$-algebras and right $A$-modules. Furthermore, $\mathbb{R}[x],\mathbb{H}[x],\mathbb{DH}[x]$ are nested real $\ast$-algebras. Similar considerations apply to $\mathbb{R}[x],\mathbb{DR}[x],\mathbb{DH}[x]$, to $\mathbb{R}[x,x^c],\mathbb{H}[x,x^c],\mathbb{DH}[x,x^c]$ and to $\mathbb{R}[x,x^c],\mathbb{DR}[x,x^c],\mathbb{DH}[x,x^c]$. We point out that $N(P)(x)$ belongs to $\mathbb{R}[x,x^c]$ when $P(x)\in\mathbb{H}[x,x^c]$ and that $N(P)(x)$ belongs to $\mathbb{DR}[x,x^c]$ when $P(x)\in\mathbb{DH}[x,x^c]$. Let us recall from~\cite{hegedus} an important concept mentioned in Section~\ref{sec:introduction}.

\begin{defn}\label{def:motionpolynomial}
    An element $P(x)=\sum_{\ell=0}^nx^\ell a_\ell$ of $\mathbb{DH}[x]$ having degree $n$ is called a \emph{motion polynomial} if its leading coefficient $a_n$ is an invertible element of $\mathbb{DH}$ and if $N(P)(x)$ belongs to $\mathbb{R}[x]$.
\end{defn}

For our purposes, we will also use the complexified algebra $A_\mathbb{C}\coloneqq A\otimes\mathbb{C}=\{a+e_1b:a,b\in A\}$, where $\{1,e_1\}$ denotes an orthonormal basis of $\mathbb{C}$ and the operations of sum and multiplication are defined by $(a+e_1b)+(c+e_1d)\coloneqq a+c+e_1(b+d)$ and $(a+e_1b)(c+e_1d)\coloneqq ac-bd+e_1(ad+bc)$.  
$A_\mathbb{C}$ is a $^\ast$-algebra with $(a+e_1b)^c\coloneqq a^c+e_1b^c$. The latter map clearly fixes every element of $\mathbb{R}_\mathbb{C}$. Additionally, we equip $A_\mathbb{C}$ with the conjugation $\overline{a+e_1b}\coloneqq a-e_1b$. In general, this additional involution does not define a $\ast$-algebra structure on $A_\mathbb{C}$, but only on its commutative subalgebras. These include $\mathbb{R}_\mathbb{C}\simeq\mathbb{C}$ and, in the special case when $A=\mathbb{DH}$, also $\mathbb{DR}_\mathbb{C}\simeq\mathbb{DC}$. We point out that $\left(\,\overline{a+e_1b}\,\right)^c=a^c-e_1b^c=\overline{(a+e_1b)^c}$ for all $a,b\in A$. We now recall a few more definitions from~\cite{perotti}. For each $J\in\mathbb{S}_A$, we define the surjective real linear map
\begin{equation*}
\phi_J:A_\mathbb{C}\to A,\qquad\phi_J(a+e_1b)\coloneqq a+Jb.
\end{equation*}
The restriction of $\phi_J$ to $\mathbb{R}_\mathbb{C}\simeq\mathbb{C}$ is a real $^\ast$-algebra isomorphism from $(\mathbb{R}_\mathbb{C},+,\cdot,\bar{\phantom{z}})\simeq(\mathbb{C},+,\cdot,\bar{\phantom{z}})$ to $(\mathbb{C}_J,+,\cdot,\,^c)$ because of the following remark.

\begin{rmk}\label{rmk:phiJ}
    The expression $\phi_J((a+e_1b)(c+e_1d))=ac-bd+J(ad+bc)$ equals $\phi_J(a+e_1b)\phi_J(c+e_1d)=ac+JbJd+aJd+Jbc$ whenever $a,b$ belong to the center of $A$. The expression $\phi_J\left(\overline{a+e_1b}\right)=\phi_J(a-e_1b)=a-Jb$ equals $\phi_J(a+e_1b)^c=(a+Jb)^c=a^c-b^cJ$ whenever $a=a^c,b=b^c$ and $b$ belongs to the center of $A$. Finally, for real $a,b$, the equality $\phi_J(a+e_1b)=a+Jb=0$ implies $a=0=b$.
\end{rmk}

For any subset $D\subset \mathbb{R}_\mathbb{C}$, invariant under conjugation, the \emph{circularization} of $D$ is defined as
\begin{equation*}
    \Omega_D=\Omega_{D,A}\coloneqq\bigcup_{J\in\mathbb{S}_A}\phi_J(D)=\{\alpha+J\beta:\alpha+e_1\beta\in D, J\in\mathbb{S}_A\}\subseteq Q_A.
\end{equation*}
A subset of $Q_A$ is called \emph{circular} if it is the circularization of some open and invariant set $D\subset\mathbb{R}_\mathbb{C}$. We call $\Omega_D$ a \emph{slice domain} if $D$ is open and connected and if $D\cap\mathbb{R}\neq\emptyset$. $\Omega_D$ is called a \emph{product domain} if $D$ is open and has two connected components swapped by conjugation. By construction, for any $x\in\Omega_D$, there exist $z\in D, J\in\mathbb{S}_A$ such that $x=\phi_J(z)$: namely, if $x=\alpha+J\beta$, then $z=\alpha+e_1\beta$. If this is the case, we set $\mathbb{S}_{x}=\mathbb{S}_{x}^A\coloneqq\Omega_{\{z\}}=\alpha+\beta\mathbb{S}_A$. We note that $\mathbb{S}_x=\{x\}$ when $x\in\mathbb{R}$. In the special case when $A=\mathbb{DH}$, we point out that $\mathbb{S}_{x_1+\epsilon x_2}^{\mathbb{DH}}=\mathbb{S}_{x_1}^{\mathbb{DH}}$ for all $x_1+\epsilon x_2\in Q_\mathbb{DH}$.

Let us define the orthogonal projections $\pi_{\mathbb{H}}:\mathbb{DH}\to \mathbb{H}$ and $\pi_{\epsilon\mathbb{H}}:\mathbb{DH}\to\epsilon\mathbb{H}$ as $\pi_{\mathbb{H}}(x_1+\epsilon x_2)\coloneqq x_1$ and $\pi_{\epsilon\mathbb{H}}(x_1+\epsilon x_2)\coloneqq\epsilon x_2$ for $x_1,x_2\in\mathbb{H}$. Clearly, the fiber of $\pi_\mathbb{H}$ over any $x_1\in\mathbb{H}$ is the affine $4$-space $\pi_{\mathbb{H}}^{-1}(x_1)=H_{x_1}$. We remark that
\begin{equation*}
\mathbb{S}_{x_1+\epsilon x_2}^{\mathbb{DH}}\supset\pi_\mathbb{H}(\mathbb{S}_{x_1+\epsilon x_2}^{\mathbb{DH}})=\mathbb{S}_{x_1+\epsilon x_2}^{\mathbb{DH}}\cap\mathbb{H}=\mathbb{S}_{x_1}^{\mathbb{H}}.
\end{equation*}
Actually, the restriction $\pi_{\mathbb{H}}:\mathbb{S}_{x_1+\epsilon x_2}^{\mathbb{DH}}\to\mathbb{S}_{x_1}^{\mathbb{H}}$ is a vector fiber bundle, which we may think of as the tangent bundle to $\mathbb{S}_{x_1}^{\mathbb{H}}$. Moreover, 
\begin{equation}\label{eq:domainprojection}
    \Omega_{D,\mathbb{DH}}\supset\pi_\mathbb{H}(\Omega_{D,\mathbb{DH}})=\Omega_{D,\mathbb{DH}}\cap\mathbb{H}=\Omega_{D,\mathbb{H}}.
\end{equation}
While the fiber of $\pi_\mathbb{H}:Q_{\mathbb{DH}}\to\mathbb{H}$ over any $x_1\in\mathbb{R}$ is just the singleton $\{x_1\}$, the restriction $\pi_{\mathbb{H}}:Q_{\mathbb{DH}}\setminus\mathbb{R}\to\mathbb{H}\setminus\mathbb{R}$ is a vector fiber bundle whose fibers are planes. Indeed, it is the pullback bundle of $\pi_{\mathbb{H}}:\mathbb{S}_{\mathbb{DH}}\to\mathbb{S}_\mathbb{H}$ via the map
\begin{equation*}
p:\mathbb{H}\setminus\mathbb{R}\to\mathbb{S}_\mathbb{H},\qquad p(x)=\frac{\operatorname{Im}(x)  }{|\operatorname{Im}(x)|},
\end{equation*}
as it holds $p^*\mathbb{S}_\mathbb{DH}=\left\{(x_1,J_1+\epsilon J_2)\in (\mathbb{H}\setminus\mathbb{R})\times\mathbb{S}_\mathbb{DH}:J_1=\frac{\operatorname{Im}(x_1)}{|\operatorname{Im}(x_1)|}\right\}\cong Q_\mathbb{DH}\setminus\mathbb{R}$.

\subsection{General facts about slice and slice regular functions}\label{subsec:slicefunctions}

Some interesting classes of functions have been constructed and studied in~\cite{perotti}. The first one is the class of stem functions $D\to A_\mathbb{C}$.

\begin{defn}
    Let $D\subset\mathbb{R}_\mathbb{C}$ be an open set that is invariant with respect to conjugation, namely $z\in D$ if and only if $\overline{z}\in D$. A function $F:D\to A_\mathbb{C}$ is called \emph{stem function} if it is complex intrinsic, i.e. it satisfies
    \begin{equation*}
    F\left(\overline{z}\right)=\overline{F(z)},\qquad\forall z\in D.
    \end{equation*}
    If $F=F_0+e_1F_1$, where $F_0,F_1:D\to A$ are the $A$-valued components of $F$, we can equivalently require the following even-odd properties
    \begin{equation*}
        F_0\left(\overline{z}\right)=F_0(z),\qquad F_1\left(\overline{z}\right)=-F_1(z).
    \end{equation*}
    We denote by $Stem(D,A_\mathbb{C})$ the set of stem functions on $D$. We also set $Stem^k(D,A_\mathbb{C})\coloneqq Stem(D,A_\mathbb{C})\cap C^k(D,A_\mathbb{C})$ for any $k\in\mathbb{N}\cup\{\infty,\omega\}$. The symbol $Stem^{pol}(D,A_\mathbb{C})$ will denote the set of stem functions $F:D\to A_\mathbb{C}$ such that $F(\alpha+e_1\beta)$ is a polynomial function in the real variables $\alpha,\beta$.
\end{defn}    
    We remark that $Stem(D,A_\mathbb{C})$ is a real $^\ast$-algebra and a right $A$-module, with
    \begin{equation*}
        (F+G)(z)\coloneqq F(z)+G(z),\quad(F\cdot G)(z)\coloneqq F(z)G(z),\quad F^c(z)\coloneqq F(z)^c,\quad (Fa)(z)\coloneqq F(z)a, 
    \end{equation*}
    for all $F,G\in Stem(D,A_\mathbb{C})$ and $a\in A$.
\begin{rmk}\label{rmk:stempol}
    A stem function $F$ belongs to $Stem^{pol}(D,A_\mathbb{C})$ if, and only if, there exist $n\in\mathbb{N}$ and $\{a_{\ell m}\}_{\ell,m=0}^n\subset A$ such that $F(z)=\sum_{\ell,m=0}^nz^\ell\bar z^ma_{\ell m}$ for all $z\in D$. For the ``if'' part, it suffices consider each monomial $M(z)\coloneqq z^\ell\bar z^m$ (with $\ell,m\in\mathbb{N}$): it clearly fulfills $M(\bar z)=\overline{M(z)}$; additionally, $M(\alpha+e_1\beta)=(\alpha+e_1\beta)^\ell(\alpha-e_1\beta)^m$ is visibly a polynomial function of $\alpha,\beta$. Conversely, for $z=\alpha+e_1\beta\in D$, every polynomial function in $\alpha=\frac{z+\bar z}{2},\beta=(2e_1)^{-1}(z-\bar z)$ can obviously be expressed as $F(z)=\sum_{\ell,m=0}^nz^\ell\bar z^ma_{\ell m}$ for some $n\in\mathbb{N}$ and some finite sequence $\{a_{\ell m}\}_{\ell,m=0}^n\subset A_\mathbb{C}$. Now, condition $F(\bar z)=\overline{F(z)}$ yields $\sum_{\ell,m=0}^n\bar z^\ell z^ma_{\ell m}=\sum_{\ell,m=0}^n\bar z^\ell z^m\bar a_{\ell m}$, whence $a_{\ell m}=\bar a_{\ell m}$ for all $\ell,m\in\{0,\ldots,n\}$, i.e., $\{a_{\ell m}\}_{\ell,m=0}^n\subset A$.

    For every $A'\in\{\mathbb{R},\mathbb{DR},\mathbb{H}\}$ we note, for future use, that $F\in Stem^{pol}(D,A_\mathbb{C})$ takes values in $A'_\mathbb{C}$ if, and only if, the sequence $\{a_\ell\}_{\ell=0}^n$ of its coefficients is contained in $A'$.
\end{rmk}

The work~\cite{perotti} also gave the next definition.

\begin{defn}
    The Wirtinger operators $\frac{\partial}{\partial z},\frac{\partial}{\partial \overline{z}}:Stem^1(D,A_\mathbb{C})\to Stem^0(D,A_\mathbb{C})$ are defined through the formulas
    \begin{equation*}
        \frac{\partial F}{\partial z}\coloneqq\frac{1}{2}\left(\frac{\partial F}{\partial\alpha}-e_1\frac{\partial F}{\partial\beta}\right),\qquad \frac{\partial F}{\partial \overline{z}}\coloneqq\frac{1}{2}\left(\frac{\partial F}{\partial\alpha}+e_1\frac{\partial F}{\partial\beta}\right)
    \end{equation*}
    and are right $A$-module morphisms. We call $F$ \emph{holomorphic} if $\frac{\partial F}{\partial \overline{z}}=0$, \emph{antiholomorphic} if $\frac{\partial F}{\partial z}=0$. The symbol $HolStem(D,A_\mathbb{C})$ will denote the right $A$-submodule of $Stem^1(D,A_\mathbb{C})$ consisting of holomorphic stem functions.
\end{defn}

We point out that $\frac{\partial(FG)}{\partial z}=\frac{\partial F}{\partial z}G+F\frac{\partial G}{\partial z}$ and $\frac{\partial (FG)}{\partial \overline{z}}=\frac{\partial F}{\partial \overline{z}}G+F\frac{\partial G}{\partial \overline{z}}$. As a consequence, $HolStem(D,A_\mathbb{C})$ is a real $*$-subalgebra of $Stem^1(D,A_\mathbb{C})$. Moreover, $\frac{\partial F}{\partial \overline{z}}=0$ is equivalent to the Cauchy-Riemann equations
    \begin{equation}\label{eq:CR}
        0\equiv2\left(\frac{\partial F}{\partial \overline{z}}\right)_0=\frac{\partial F_0}{\partial\alpha}-\frac{\partial F_1}{\partial\beta},\qquad 0\equiv2\left(\frac{\partial F}{\partial \overline{z}}\right)_1=\frac{\partial F_1}{\partial\alpha}+\frac{\partial F_0}{\partial\beta}
    \end{equation}
for the components $F_0,F_1$ of $F$. We also point out that, for every $F\in Stem(D,A_\mathbb{C})$, the function $\overline{F}$ (mapping each $z\in D$ to $\overline{F(z)}=F(\bar z)$) still belongs to $Stem(D,A_\mathbb{C})$. Moreover, $\overline{F}$ is antiholomorphic if $F$ is holomorphic.

\begin{ex}
    If $F(z)=\sum_{\ell,m=0}^nz^\ell\bar z^ma_{\ell m}$ for all $z\in D$, where $\{a_{\ell m}\}_{\ell,m=0}^n\subset A$, then
    \begin{equation*}
        \frac{\partial F}{\partial z}(z)=\sum_{\ell,m=0}^n\ell z^{\ell-1}\bar z^ma_{\ell m}\,,\quad \frac{\partial F}{\partial \bar z}(z)=\sum_{\ell,m=0}^nm z^\ell\bar z^{m-1}a_{\ell m}\,.
    \end{equation*}
    Here, we adopted the following conventions: $\ell z^{\ell-1}\equiv0$ for $\ell=0$, $m\bar z^{m-1}\equiv0$ for $m=0$. In particular, $F$ is holomorphic if, and only if $a_{\ell m}=0$ for all $\ell\in\mathbb{N},m\in\mathbb{N}^*$. Thus, $Stem^{pol}(D,A_\mathbb{C})\cap HolStem(D,A_\mathbb{C})$ consists of all functions $F(z)=z^na_n+\ldots+za_1+a_0$ with $n\in\mathbb{N}$ and $\{a_\ell\}_{\ell=0}^n\subset A$. We note, for future use, that such an $F$ has
    \begin{equation*}
        \frac{\partial F}{\partial z}(z)=nz^{n-1}a_n+\ldots+2za_2+a_1\,.
    \end{equation*}
    Three examples of elements of $Stem^{pol}(D,A_\mathbb{C})$ not belonging to $HolStem(D,A_\mathbb{C})$ are $RE(z)\coloneqq\frac{z+\bar z}{2}$, and $IM(z)=\frac{z-\bar z}{2}$ and $IM^2(z)=\frac{z^2-2z\bar z+\bar z^2}{4}$, which will soon be useful.
\end{ex}

Stem functions are a tool to construct two other interesting classes of functions, see~\cite{perotti}: slice functions and slice regular functions $\Omega_D\to A$.

\begin{defn}
    Assume $D\subset \mathbb{R}_\mathbb{C}$ to be invariant under conjugation. A function $f:\Omega_D\subset A\to A$ is called a \emph{slice function} if there exists a stem function $F=F_0+e_1F_1:D\to A_\mathbb{C}$ such that, for any $x=\phi_J(z)\in \Omega_D$ it holds
    \begin{equation*}
        f(x)=F_0(z)+JF_1(z).
    \end{equation*}
    If this is the case, we write $f=\mathcal{I}(F)$. The function $f$ is called \emph{slice-preserving} if $F$ takes values in $\mathbb{R}_\mathbb{C}$, i.e., if $F_0,F_1$ take values in $\mathbb{R}$. We denote the set of slice functions on $\Omega_D$ by $\mathcal{S}(\Omega_D)$ and the set $\mathcal{I}(Stem(D,\mathbb{R}_\mathbb{C}))$ of slice-preserving slice functions on $\Omega_D$ by $\mathcal{S}_\mathbb{R}(\Omega_D)$, respectively. For $k\in\mathbb{N}\cup\{\infty,\omega\}$, we denote by the symbols $\mathcal{S}^k(\Omega_D),\mathcal{S}^{pol}(\Omega_D)$ the images through $\mathcal{I}$ of $Stem^k(D,A_\mathbb{C}),Stem^{pol}(D,A_\mathbb{C})$, respectively.
    
    A slice function $\mathcal{I}(F):\Omega_D\to A$ is termed \emph{slice regular} if the inducing stem function $F$ is holomorphic. The symbol $\mathcal{SR}(\Omega_D)$ will denote the set of slice regular functions $f:\Omega_D\to A$, which is the image through $\mathcal{I}$ of $HolStem(D,A_\mathbb{C})$. Furthermore, we set $\mathcal{SR}_\mathbb{R}(\Omega_D)\coloneqq\mathcal{S}_\mathbb{R}(\Omega_D)\cap\mathcal{SR}(\Omega_D)$ and $\mathcal{SR}^{pol}(\Omega_D)\coloneqq\mathcal{S}^{pol}(\Omega_D)\cap\mathcal{SR}(\Omega_D)$. In the special case when $A=\mathbb{DH}$, we also set $\mathcal{S}_\mathbb{DR}(\Omega_D)\coloneqq\mathcal{I}(Stem(D,\mathbb{DR}_\mathbb{C}))$, as well as $\mathcal{SR}_\mathbb{DR}(\Omega_D)\coloneqq\mathcal{S}_\mathbb{DR}(\Omega_D)\cap\mathcal{SR}(\Omega_D)$.
\end{defn}
Equivalently, we can define $f=\mathcal{I}(F)$ as the unique function that makes the following diagram commutative for any $J\in\mathbb{S}_A$:
\begin{center}
    \begin{tikzcd}
	{D} && {A_\mathbb{C}} \\
	& \circlearrowleft \\
	{ Q_A\supset\Omega_D} && {A.}
	\arrow["F", from=1-1, to=1-3]
	\arrow["{\phi_J}"', from=1-1, to=3-1]
	\arrow["f"', from=3-1, to=3-3]
	\arrow["{\phi_J}", from=1-3, to=3-3]
\end{tikzcd}
\end{center}

\begin{ex}
    The map $\operatorname{Re}:Q_A\to\mathbb{R}\subset A,\ x\mapsto\frac{x+x^c}{2}$ is an element of $\mathcal{S}_\mathbb{R}^{pol}(Q_A)$, induced by the polynomial stem function $RE:\mathbb{R}_\mathbb{C}\to\mathbb{R},\ z\mapsto\frac{z+\bar z}{2}$. The map $\operatorname{Im}:Q_A\to A,\ x\mapsto\frac{x-x^c}{2}$ is another element of $\mathcal{S}_\mathbb{R}^{pol}(Q_A)$, induced by the polynomial stem function $IM:\mathbb{R}_\mathbb{C}\to e_1\mathbb{R},\ z\mapsto\frac{z-\bar z}{2}$. The map $Q_A\to A,\ x\mapsto\frac{x^2-2xx^c+(x^c)^2}{4}$ is yet another element of $\mathcal{S}_\mathbb{R}^{pol}(Q_A)$, which by direct computation is induced by the polynomial stem function $IM^2:\mathbb{R}_\mathbb{C}\to \mathbb{R},\ z\mapsto\frac{z^2-2z\bar z+\bar z^2}{4}$. Another direct computation shows that $\frac{x^2-2xx^c+(x^c)^2}{4}=\operatorname{Im}(x)^2$.
\end{ex}

Since $A$ is associative, the map $\mathcal{I}:Stem(D,A_\mathbb{C})\to\mathcal{S}(\Omega_D)$ mapping each stem function into its induced slice function is a right $A$-module isomorphism. This fact is well known, see for instance \cite[Proposition 6.7]{gsunifiedtheory}. There exist unique definitions of product $f\cdot g$ and conjugation $f^c$ of elements $f,g$ of $\mathcal{S}(\Omega_D)$ that make $\mathcal{S}(\Omega_D)$ a real $^\ast$-algebra and $\mathcal{I}$ an isomorphism of $^\ast$-algebras. The notations $N(f)\coloneqq f\cdot f^c,T(f)\coloneqq f+f^c$ will also be useful. For more details about the definitions of $f\cdot g,f^c,N(f)$, see~\cite[Definitions 9 \& 11]{perotti}. Additionally, for any $\ell\in\mathbb{N}$, the symbol $f^{\bullet\ell}(x)=f(x)^{\bullet\ell}$ will denote the $\ell$-th power of $f$ with respect to the multiplicative operation $\cdot$. The equalities $(f\cdot g)(x)=f(x)g(x)$, $f^c(x)=(f(x^c))^c$, $N(f)(x)=n(f(x))$ and $T(f)(x)=t(f(x))$ are true when $f\in\mathcal{S}_\mathbb{R}(\Omega_D)$, but false in general: see~\cite[Remark 7]{perotti}. Each of the subclasses $\mathcal{SR}(\Omega_D),\mathcal{S}_\mathbb{R}(\Omega_D),\mathcal{SR}_\mathbb{R}(\Omega_D)$ is a $^\ast$-subalgebra of $\mathcal{S}(\Omega_D)$.

A slice function $f=\mathcal{I}(F_0)$, induced by an $A$-valued stem function $F=F_0+e_10=F_0:D\to A\subset A_\mathbb{C}$, is called a \emph{circular slice function}. It is constant on each ``sphere'' $\alpha+\mathbb{S}_A\beta$, with $\alpha+e_1\beta\in D$, i.e., on each $\mathbb{S}_x^A$ with $x\in\Omega_D$. The work~\cite{perotti} also gave the next definition.

\begin{defn}
    Let $f=\mathcal{I}(F)\in\mathcal{S}(\Omega_D)$, with $F=F_0+e_1F_1$. The \emph{spherical value} and \emph{spherical derivative} of $f$ are defined, respectively, as
    \begin{align*}
        v_sf(x)&\coloneqq\frac{1}{2}(f(x)+f(x^c)),\qquad\forall\, x\in\Omega_D,\\
        \partial_s f(x)&\coloneqq(x-x^c)^{-1}(f(x)-f(x)^c),\qquad\forall\, x\in\Omega_D\setminus\mathbb{R}.
    \end{align*}
    The functions $v_sf,\partial_s f$ are also denoted by $f^\circ_s,f'_s$, respectively.
\end{defn}
Note that $f^\circ_s,f'_s$ are slice functions, induced respectively by $F_0$ and by the stem function $\alpha+e_1\beta\mapsto F_1(\alpha+e_1\beta)/\beta$. In particular, $f^\circ_s$ and $f'_s$ are circular slice functions. Moreover, by construction, 
\begin{equation}\label{eq:sphericalrepresentationformula}
    f(x)=f^\circ_s(x)+\operatorname{Im}(x)f'_s(x)=f^\circ_s(x)+(\operatorname{Im}\cdot f'_s)(x)
\end{equation}
for all $x\in\Omega_D\setminus\mathbb{R}$ and $f(x)=f^\circ_s(x)$ for all $x\in\Omega_D\cap\mathbb{R}$. In the special case when $F_1\in\mathcal{C}^1(D,A_\mathbb{C})$ (true if $f\in\mathcal{S}^1(\Omega_D)$), we can extend $f'_s$ to a unique element of $\mathcal{S}^0(\Omega_D)$ by setting $f'_s(\alpha)\coloneqq\frac{\partial F_1}{\partial\beta}(\alpha)$ for all $\alpha\in\Omega_D\cap\mathbb{R}$. In this situation, formula~\eqref{eq:sphericalrepresentationformula} is valid throughout $\Omega_D$. Moreover, $f$ is circular if, and only if, $f'_s\equiv0$ in $\Omega_D\setminus\mathbb{R}$, i.e., $f=f^\circ_s$ throughout $\Omega_D$. Formula~\eqref{eq:sphericalrepresentationformula} is useful to study the zero set $V(f)$ of any slice function $f:\Omega_D\to A$, see the forthcoming Subsections~\ref{subsec:quaternionicslicefunctions} and~\ref{subsec:dualquatenionicslicefunctions}. The work~\cite{gpsalgebra} showed that
\begin{align}
    (f\cdot g)(x)&=f^\circ_s(x)g^\circ_s(x)+\operatorname{Im}(x)^2f'_s(x)g'_s(x)+\operatorname{Im}(x)\left(f^\circ_s(x)g'_s(x)+f'_s(x)g^\circ_s(x)\right)\,,\label{eq:sliceproduct}\\
    f^c(x)&=f^\circ_s(x)^c+\operatorname{Im}(x)f'_s(x)^c\,,\label{eq:sliceconjugate}\\
    N(f)&=n\left(f^\circ_s(x)\right)+\operatorname{Im}(x)^2\,n\left(f'_s(x)\right)+\operatorname{Im}(x)\,t\left(f^\circ_s(x)f'_s(x)^c\right)\label{eq:normalfunction}
\end{align}
for $x\in\Omega_D\setminus\mathbb{R}$ and that $(f\cdot g)(x)=f^\circ_s(x)g^\circ_s(x)=f(x)g(x)$, $f^c(x)=f^\circ_s(x)^c=f(x)^c$, and $N(f)=n\left(f^\circ_s(x)\right)=n(f(x))$ for $x\in\Omega_D\cap\mathbb{R}$. The last three formulas hold true at all $x\in\Omega_D$ in the special case when $f,g$ are circular slice functions. This allows us to derive from~\eqref{eq:sliceproduct},~\eqref{eq:sliceconjugate},~\eqref{eq:normalfunction} the following remark.

\begin{rmk}\label{rmk:sliceproductofcomponents}
    For all $f,g\in\mathcal{S}(\Omega_D)$, the equalities
    \begin{align*}
        f\cdot g&=f^\circ_s\cdot g^\circ_s+\operatorname{Im}^2\cdot f'_s\cdot g'_s+\operatorname{Im}\cdot \left(f^\circ_s\cdot g'_s+f'_s\cdot g^\circ_s\right)\,,\\
        f^c&=(f^\circ_s)^c+\operatorname{Im}\cdot(f'_s)^c\,,\\
        N(f)&=N\left(f^\circ_s\right)+\operatorname{Im}^2\cdot N\left(f'_s\right)+\operatorname{Im}\cdot T\left(f^\circ_s\cdot(f'_s)^c\right)
    \end{align*}
    hold true in $\Omega_D\setminus\mathbb{R}$. If, moreover, $f,g\in\mathcal{S}^1(\Omega_D)$, then the same equalities hold true throughout $\Omega_D$.
\end{rmk}

We notice that mapping $f=\mathcal{I}(F)$ to the function $x\mapsto f(x^c)=f^\circ_s(x)-\operatorname{Im}(x)f'_s(x)$ preserves $\mathcal{S}(\Omega_D)$ and $\mathcal{S}^k(\Omega_D)$ for all $k\in\mathbb{N}\cup\{\infty,\omega\}$, but not $\mathcal{SR}(\Omega_D)$. This is because  $x\mapsto f(x^c)$ is still a slice function on $\Omega_D$, induced by $\overline{F}$.

We recall from~\cite[Proposition 7]{perotti} that $\mathcal{S}^{2k+1}(\Omega_D)\subset C^k(\Omega_D,A)$ for all $k\in\mathbb{N}$ and that $\mathcal{S}^\infty(\Omega_D)\subset C^\infty(\Omega_D,A)$, $\mathcal{S}^\omega(\Omega_D)\subset C^\omega(\Omega_D,A)$. Moreover, according to~\cite[Corollary 2.7]{global}, the restriction of an element of $\mathcal{S}^1(\Omega_D)$ to $\Omega_D\setminus\mathbb{R}$ is a $C^1$ function. Given  $f\in\mathcal{S}^1(\Omega_D)$, i.e., $f=\mathcal{I}(F)$ with $F\in Stem^1(D,A_\mathbb{C})$, let us set
\begin{equation*}
    \partial_c f\coloneqq \mathcal{I}\left(\frac{\partial F}{\partial z}\right),\quad\overline{\partial}_c f\coloneqq \mathcal{I}\left(\frac{\partial F}{\partial \bar z}\right)\,.
\end{equation*}
We point out that $\partial_c,\overline{\partial}_c:\mathcal{S}^1(\Omega_D)\to\mathcal{S}^0(\Omega_D)$ are right $A$-module morphisms and that $\partial_c(f\cdot g)=\partial_c f\cdot g+f\cdot\partial_c g$. If $f\in\mathcal{SR}(\Omega_D)$ (i.e., if $F$ is holomorphic), then $\partial_c f$ is called the \emph{Cullen} (or complex) \emph{derivative} of $f$ and denoted also by the symbol $f'_c$. Since $\frac{\partial F}{\partial z}$ is holomorphic in this case, clearly $f'_c$ still belongs to $\mathcal{SR}(\Omega_D)$. This is in contrast with the fact that $f^\circ_s,f'_s$ are only slice regular when they are locally constant, see \cite[Remark 5]{perotti}. In general, a slice function $f\in\mathcal{S}^1(\Omega_D)$ is slice regular if, and only if, $\overline{\partial}_c f$ vanishes identically.

\begin{ex}[{$\mathcal{S}^{pol}(Q_A)$}]\label{exm:Spol}
    Let $\ell,m\in\mathbb{N}$. Both $x\mapsto x^\ell$ and $x\mapsto (x^c)^m$ are slice-preserving slice functions $Q_A\to A$, induced by the polynomial stem functions $z\mapsto z^\ell$ and $z\mapsto \bar z^m$ from $\mathbb{R}_\mathbb{C}$ to itself (see~\cite[Examples 2]{perotti}). So is their product $x^\ell\cdot(x^c)^m=x^\ell(x^c)^m$, which is induced by the polynomial stem function $z\mapsto z^\ell\bar z^m$. Taking into account that $\mathcal{I}$ is a right $A$-module isomorphism, we conclude that $\mathcal{S}^{pol}(Q_A)\coloneqq\mathcal{I}(Stem^{pol}(\mathbb{R}_\mathbb{C},A_\mathbb{C}))$ is the set of all functions $f:Q_A\to A$ of the form \[f(x)=\sum_{\ell,m=0}^nx^\ell(x^c)^ma_{\ell m}\]
    with $n\in\mathbb{N}$ and $\{a_{\ell m}\}_{\ell,m=0}^n\subset A$. This $f$ is induced by the stem function $F(z)=\sum_{\ell,m=0}^nz^\ell\bar z^ma_{\ell m}$. Moreover,
    \begin{equation*}
        \partial_c f(x)=\sum_{\ell,m=0}^n\ell x^{\ell-1}(x^c)^ma_{\ell m}\,,\qquad
        \overline{\partial}_c f(x)=\sum_{\ell,m=0}^nm x^\ell(x^c)^{m-1}a_{\ell m}\,.
    \end{equation*}
    On the other hand,
    \begin{equation*}
        f^\circ_s(x)=\sum_{\ell,m=0}^n\operatorname{Re}(x^\ell(x^c)^m)a_{\ell m}\,,\qquad
        f'_s(x)=(\operatorname{Im}(x))^{-1}\sum_{\ell,m=0}^n\operatorname{Im}(x^\ell(x^c)^m)a_{\ell m}\,.
    \end{equation*}
Clearly, $\mathcal{S}^{pol}(Q_A)$ is the right $A$-module obtained by  restricting to $Q_A$ all elements of the right $A$-module $A[x,x^c]$. The $\ast$-algebra structure on $\mathcal{S}^{pol}(Q_A)\subset\mathcal{S}(Q_A)$, too, is consistent with that of $A[x,x^c]$. This is because the equalities $z^{\ell'}\bar z^{m'}a_{\ell' m'}z^{\ell''}\bar z^{m''}b_{\ell'' m''}=z^{\ell'+\ell''}\bar z^{m'+m''}a_{\ell' m'}b_{\ell'' m''}$ and $(z^{\ell'}\bar z^{m'}a_{\ell' m'})^c=z^{\ell'}\bar z^{m'}a_{\ell' m'}^c$ immediately imply both $(x^{\ell'}(x^c)^{m'}a_{\ell' m'})\cdot(x^{\ell''}(x^c)^{m''}b_{\ell'' m''})=x^{\ell'+\ell''}(x^c)^{m'+m''}a_{\ell' m'}b_{\ell'' m''}$ and $(x^{\ell'}(x^c)^{m'}a_{\ell' m'})^c=x^{\ell'}(x^c)^{m'}a_{\ell' m'}^c$.
\end{ex}

\begin{ex}[{$\mathcal{SR}^{pol}(Q_A)$}]\label{exm:SRpol}
    By definition, $\mathcal{SR}^{pol}(Q_A)\coloneqq\mathcal{S}^{pol}(Q_A)\cap\mathcal{SR}(Q_A)$. Its elements are exactly the functions $f\in\mathcal{S}^{pol}(Q_A)$ with $\overline{\partial}_cf\equiv0$. Thus, $\mathcal{SR}^{pol}(Q_A)$ is the set of all restrictions to $Q_A$ of elements of $A[x]$: namely, functions $f:Q_A\to A$ of the form
    \[f(x)=x^na_n+\ldots+xa_1+a_0\]
    with $n\in\mathbb{N}$ and $\{a_\ell\}_{\ell=0}^n\subset A$. In this situation, $f=\mathcal{I}(F)$ where $F(z)=z^na_n+\ldots+za_1+a_0$. Thus, $\mathcal{SR}^{pol}(Q_A)=\mathcal{I}(Stem^{pol}(\mathbb{R}_\mathbb{C},A_\mathbb{C})\cap HolStem(\mathbb{R}_\mathbb{C},A_\mathbb{C}))$. Moreover,
    \begin{equation*}
        f'_c(x)=nx^{n-1}a_n+\ldots+2xa_2+a_1\,.
    \end{equation*}
    On the other hand,
    \begin{align*}
        f^\circ_s(x)&=\operatorname{Re}(x^n)a_n+\ldots+\operatorname{Re}(x^2)a_2+\operatorname{Re}(x)a_1+a_0\,,\\
        f'_s(x)&=(\operatorname{Im}(x))^{-1}\operatorname{Im}(x^n)a_n+\ldots+(\operatorname{Im}(x))^{-1}\operatorname{Im}(x^2)a_2+a_1\,.
    \end{align*}
    Finally, the $A$-module structure and the $\ast$-algebra structure on $\mathcal{SR}^{pol}(Q_A)\subset\mathcal{S}(Q_A)$ are consistent with those of $A[x]$. For instance: if $y=\alpha+I\beta\in Q_A$ (with $\alpha,\beta\in\mathbb{R},I\in\mathbb{S}_A$) and $f(x)\coloneqq x-y$, then $f^c(x)=x-y^c$ and $N(f)(x)=x^2-xt(y)+n(y)=(x-\alpha)^2+\beta^2$. 
\end{ex}

\subsection{Quaternionic slice and slice regular functions}\label{subsec:quaternionicslicefunctions}

In this subsection we focus only on quaternions and overview some standard properties of quaternionic slice and slice regular functions. We begin with some properties of the zero set $V(f)$ of any quaternionic slice function $f$.

\begin{lem}\label{lem:quaternioniczeros}
    Let $\Omega_D$ be a circular domain in $A=\mathbb{H}$ and let $\alpha,\beta\in\mathbb{R}$ (with $\beta>0$) be such that $\alpha+\beta\mathbb{S}_\mathbb{H}\subset\Omega_D$. If $f\in\mathcal{S}(\Omega_D)$, one of the following facts holds true:
    \begin{enumerate}
        \item $V(f)\cap(\alpha+\beta\mathbb{S}_\mathbb{H})=\emptyset$;
        \item $V(f)\cap(\alpha+\beta\mathbb{S}_\mathbb{H})$ is a singleton and $f'_s$ does not vanish in $\alpha+\beta\mathbb{S}_\mathbb{H}$;
        \item $\alpha+\beta\mathbb{S}_\mathbb{H}\subset V(f)$ and $f^\circ_s,f'_s$ both vanish identically in $\alpha+\beta\mathbb{S}_\mathbb{H}$.
    \end{enumerate}
    In each of the former cases, respectively,
    \begin{enumerate}
        \item $V(f^c)\cap(\alpha+\beta\mathbb{S}_\mathbb{H})=\emptyset=V(N(f))\cap(\alpha+\beta\mathbb{S}_\mathbb{H})$;
        \item $V(f^c)\cap(\alpha+\beta\mathbb{S}_\mathbb{H})$ is a singleton and $V(N(f))\supseteq\alpha+\beta\mathbb{S}_\mathbb{H}$;
        \item $\alpha+\beta\mathbb{S}_\mathbb{H}$ is contained both in $V(f^c)$ and in $V(N(f))$.
    \end{enumerate}
\end{lem}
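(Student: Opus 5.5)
The plan is to work on the sphere $\alpha+\beta\mathbb{S}_\mathbb{H}$ through the spherical representation formula~\eqref{eq:sphericalrepresentationformula}. Since $f^\circ_s$ and $f'_s$ are circular slice functions, they take constant values $a\coloneqq f^\circ_s(x)$ and $b\coloneqq f'_s(x)$ at every $x=\alpha+J\beta$ with $J\in\mathbb{S}_\mathbb{H}$. The formula then gives
\[
f(\alpha+J\beta)=a+J\beta b\qquad\text{for all } J\in\mathbb{S}_\mathbb{H}.
\]
So the zero set of $f$ on the sphere is the set of solutions $J\in\mathbb{S}_\mathbb{H}$ of $J\beta b=-a$, and the trichotomy follows from a case analysis on $b$.

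First, if $b=0$, then $f\equiv a$ on the sphere. We get case 1 when $a\neq0$ and case 3 when $a=0$; in the latter case $f^\circ_s$ and $f'_s$ both vanish on the sphere. Second, if $b\neq0$, then $b$ is invertible because $\mathbb{H}$ is a division algebra. The equation forces $J=-a b^{-1}\beta^{-1}$, so there is at most one zero. A zero exists exactly when $-ab^{-1}\beta^{-1}\in\mathbb{S}_\mathbb{H}$, i.e.\ when $t(ab^{-1})=0$ and $n(ab^{-1})=\beta^2$. This yields case 2 when the condition holds and case 1 otherwise, with $f'_s\equiv b\neq0$ on the sphere in case 2. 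The three cases are mutually exclusive, since case 3 requires infinitely many zeros.

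For the second list, I would apply~\eqref{eq:sliceconjugate}, which gives $f^c(\alpha+J\beta)=a^c+J\beta b^c$, and~\eqref{eq:normalfunction}, which gives
\[
N(f)(\alpha+J\beta)=n(a)-\beta^2 n(b)+J\beta\, t(ab^c),
\]
using $\operatorname{Im}(x)^2=-\beta^2$. The quantities $n(a)-\beta^2n(b)$ and $\beta\,t(ab^c)$ are real, so $N(f)$ vanishes at one point of the sphere if and only if it vanishes on the whole sphere, namely iff $n(a)=\beta^2n(b)$ and $t(ab^c)=0$. Rewriting this in terms of $ab^{-1}=ab^c/n(b)$ (when $b\neq0$), it is exactly the condition above for $f$ to have a zero on the sphere. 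When $b=0$ it reduces to $a=0$.

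Hence in case 1 we have $N(f)\neq0$ on the sphere. In case 2 we have $V(N(f))\supseteq\alpha+\beta\mathbb{S}_\mathbb{H}$. In case 3 all of $f^c$, $N(f)$ vanish identically there, since $a=b=0$. For $f^c$, the same analysis applies with $(a^c,b^c)$ in place of $(a,b)$. Because $n$ is invariant under conjugation and $t(a^c b)=t(ab^c)$, the existence condition for zeros of $f^c$ coincides with that for $f$. This yields emptiness in case 1 and a singleton in case 2.

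The only delicate step is checking that the existence criterion for a zero of $f$, namely $-ab^{-1}\beta^{-1}\in\mathbb{S}_\mathbb{H}$, is equivalent to the real conditions $n(a)=\beta^2n(b)$ and $t(ab^c)=0$, and that the same conditions govern $f^c$. This is a short computation with $t$ and $n$ in $\mathbb{H}$, using $b^{-1}=b^c/n(b)$ and the multiplicativity of $n$.
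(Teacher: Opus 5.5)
Your proof is correct and follows the same route as the paper. The paper derives the trichotomy from the spherical representation formula~\eqref{eq:sphericalrepresentationformula} and the absence of zero divisors in $\mathbb{H}$, then obtains the statements about $f^c$ and $N(f)$ from~\eqref{eq:sliceconjugate} and~\eqref{eq:normalfunction}; you simply supply the explicit computations it leaves implicit, namely the criterion $n(a)=\beta^2n(b)$, $t(ab^c)=0$ and the identity $t(a^cb)=t(ab^c)$.
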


The first statement in Lemma~\ref{lem:quaternioniczeros} is an immediate consequence of formula~\eqref{eq:sphericalrepresentationformula}, if we take into account the fact that there are no zero divisors in $\mathbb{H}$. The second statement then follows from~\eqref{eq:sliceconjugate} and~\eqref{eq:normalfunction}. For the special case of Lemma~\ref{lem:quaternioniczeros} when $f$ is slice regular, we refer the reader to~\cite[Chapter 3]{librospringer2}, which also includes the next result.

\begin{thm}
  Assume $\Omega_D$ to be a slice domain in $A=\mathbb{H}$ and let $f\in\mathcal{SR}(\Omega_D)$, with $f$ not identically zero. Then the zero set $V(f)$ of $f$ consists of isolated points or isolated 2-spheres of the form $\alpha+\beta\mathbb{S}_\mathbb{H}$.
\end{thm}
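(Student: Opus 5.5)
The plan is to reduce the statement to the classical identity principle for holomorphic functions on $D$, combined with the trichotomy of Lemma~\ref{lem:quaternioniczeros} and the symmetrized function $N(f)=f\cdot f^c$, which is slice-preserving.

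\textbf{Step 1: reduction to the slice-preserving case.} Write $f=\mathcal{I}(F)$ with $F\in HolStem(D,\mathbb{H}_\mathbb{C})$. Then $N(f)=\mathcal{I}(FF^c)\in\mathcal{SR}_\mathbb{R}(\Omega_D)$. For $x=\phi_J(z)$ we have $N(f)(x)=\phi_J(G(z))$, where $G\coloneqq FF^c$ is a holomorphic $\mathbb{C}$-valued function on $D$ with $G(\bar z)=\overline{G(z)}$.

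I would first check that $N(f)\not\equiv0$. The restriction $f_{|\mathbb{C}_J}$ is a holomorphic map $\mathbb{C}_J\cap\Omega_D\to\mathbb{H}\cong\mathbb{C}_J\oplus\mathbb{C}_J K$, after splitting $\mathbb{H}=\mathbb{C}_J+\mathbb{C}_JK$. We have $N(f)(x)=n(f(x))$ at real points. More generally, on $\mathbb{C}_J$ one has the identity
\[
N(f)_{|\mathbb{C}_J}=h\bar h^{*}+g\bar g^{*},
\]
where $f_{|\mathbb{C}_J}=h+gK$ and $h^*(z)\coloneqq\overline{h(\bar z)}$.
\begin{itemize}
\item If $D$ meets $\mathbb{R}$ (slice domain), then $n(f)>0$ at real points where $f\neq0$. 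If $f$ vanished on all of $\Omega_D\cap\mathbb{R}$, the identity principle on the connected set $D$ would force $F\equiv0$. Hence $G\not\equiv0$.
\item For the disconnected case I would not bother, since a slice domain is assumed.
\end{itemize}

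\textbf{Step 2: isolated zeros of $G$.} Since $D$ is connected and $G\not\equiv0$ is holomorphic, the zeros of $G$ in $D$ are isolated. By conjugation-invariance they come in pairs $z,\bar z$. The zero set of $N(f)$ is therefore $\Omega_{V(G)}$, a discrete union of real points and spheres $\alpha+\beta\mathbb{S}_\mathbb{H}$.

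\textbf{Step 3: comparing $V(f)$ with $V(N(f))$.} I would show $V(f)\subseteq V(N(f))$.
\begin{itemize}
\item At real points, $N(f)(x)=n(f(x))$.
\item At non-real points, use Lemma~\ref{lem:quaternioniczeros}: a zero of $f$ on a sphere forces case 2 or case 3, and in both cases $\alpha+\beta\mathbb{S}_\mathbb{H}\subseteq V(N(f))$.
\end{itemize}
Therefore $V(f)$ is contained in a discrete union of points and spheres. By Lemma~\ref{lem:quaternioniczeros}, on each such sphere $V(f)$ is empty, a single point, or the whole sphere. This gives exactly isolated points or isolated spheres.

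\textbf{Main obstacle.} The delicate point is Step 1: proving that $f\not\equiv0$ implies $N(f)\not\equiv0$. Doing it via real points needs $D\cap\mathbb{R}\neq\emptyset$, together with the fact that a holomorphic $F$ vanishing on a real interval vanishes identically. That fact follows by applying the identity principle componentwise to $F_0+e_1F_1$ in real coordinates of $\mathbb{H}$, so the slice-domain hypothesis is exactly what is used there.
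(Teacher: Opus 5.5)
Your proposal is correct and is essentially the standard argument of \cite[Chapter 3]{librospringer2}, to which the paper defers instead of giving its own proof: pass to the slice-preserving symmetrization $N(f)=f\cdot f^c$, get $N(f)\not\equiv0$ from its values $n(f(x))$ at real points together with the identity principle on the connected set $D$, and then combine the discreteness of $V(N(f))=\Omega_{V(G)}$ with the trichotomy of Lemma~\ref{lem:quaternioniczeros}. The only slip is the displayed splitting identity: with your definition $h^*(z)=\overline{h(\bar z)}$ it should read $N(f)_{|\mathbb{C}_J}=hh^*+gg^*$, and since you never use it you can simply drop it.
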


When $\Omega_D$ is a product domain, the zero sets can be more peculiar, see~\cite{gporientation}.

It is also possible to consider quotients of slice functions, in the following sense.

\begin{defn}
Let $\Omega_D$ be a circular domain in $A=\mathbb{H}$ and let $f\in\mathcal{S}(\Omega_D)$. If $N(f)\not\equiv0$, then the \emph{slice reciprocal} $f^{-\bullet}$ is defined as 
\begin{equation*}
    f^{-\bullet}(x)\coloneqq (N(f)(x))^{-1}f^c(x)
\end{equation*}
for every $x\in\Omega_D\setminus V(N(f))$. We also set the notation $f^{-\bullet \ell}\coloneqq (f^{-\bullet})^{\bullet\ell}$ for all $\ell\in\mathbb{N}$.
\end{defn}

When $h\in\mathcal{S}(\Omega_D)$ is slice-preserving, it is not hard to prove that $h^{-\bullet}$ is a slice-preserving element of $\mathcal{S}(\Omega_D)$ whose value at any $x\in\Omega_D\setminus V(h)$ is $h(x)^{-1}$. This applies, in particular, to $h=N(f)$, whence $f^{-\bullet}=N(f)^{-\bullet}\cdot f^c$ is an element of $\mathcal{S}(\Omega_D\setminus V(N(f)))$. Moreover, $f\cdot f^{-\bullet}=f^{-\bullet}\cdot f\equiv1$. For all $g\in\mathcal{S}(\Omega_D)$, we also have
\begin{align*}
   (f^{-\bullet}\cdot g)(x)&=(N(f)^{-\bullet}\cdot f^c\cdot g)(x)=(N(f)(x))^{-1}(f^c\cdot g)(x)\\
   (g\cdot f^{-\bullet})(x)&=(g\cdot N(f)^{-\bullet}\cdot f^c)(x)=(N(f)^{-\bullet}\cdot g\cdot f^c)(x)=(N(f)(x))^{-1}(g\cdot f^c)(x)
\end{align*}
for all $x\in\Omega_D\setminus V(N(f))$.

We now study the real differential of a quaternionic slice function $f$ induced by a $C^1$ stem function. If $f$ is slice regular, the following result is well-known (see, e.g.,~\cite[Remark 8.15]{librospringer2}).

\begin{lem}\label{lem:differential}
    Let $\Omega_D$ be a slice domain in $A=\mathbb{H}$ and let $f\in\mathcal{SR}(\Omega_D)$. Let $p=\phi_I(z)\in\mathbb{C}_I\setminus\mathbb{R}$, for some $I\in\mathbb{S}_\mathbb{H},z\in D$ and let us decompose $T_p\Omega_D\simeq\mathbb{H}$ as $\mathbb{C}_I\oplus\mathbb{C}_I^\perp$.
    Then
    \begin{equation*}
        df_{p}(v+w)=v f'_c(p)+w f'_s(p),\qquad\forall v\in\mathbb{C}_I,\quad w\in\mathbb{C}_I^\perp.
    \end{equation*}
    As a consequence, $\rk df_p\in\{0,2,4\}$. If we pick, instead, $p\in\Omega_D\cap\mathbb{R}$, then $df_{p}(v)=vf'_c(p)$ for all $v\in T_p\Omega_D\simeq\mathbb{H}$ and $\rk df_p\in\{0,4\}$.
\end{lem}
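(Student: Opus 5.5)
We prove Lemma~\ref{lem:differential} by differentiating the spherical representation formula~\eqref{eq:sphericalrepresentationformula}, $f(x)=f^\circ_s(x)+\operatorname{Im}(x)f'_s(x)$, at $p=\alpha+I\beta$ with $\beta>0$ (possibly after replacing $z$ by $\bar z$ and $I$ by $-I$). Near $p$ we write $x=\phi_J(\alpha'+e_1\beta')$ with $\alpha'=\operatorname{Re}(x)$, $\beta'=|\operatorname{Im}(x)|$ and $J=\operatorname{Im}(x)/|\operatorname{Im}(x)|$. These are smooth coordinates on $\Omega_D\setminus\mathbb{R}$. The circular functions $f^\circ_s,f'_s$ depend only on $(\alpha',\beta')$, namely $f^\circ_s(x)=F_0(\alpha'+e_1\beta')$ and $f'_s(x)=F_1(\alpha'+e_1\beta')/\beta'$, where $F=F_0+e_1F_1$ is holomorphic, hence smooth.

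We then compute the differential on the two summands of $T_p\Omega_D\simeq\mathbb{C}_I\oplus\mathbb{C}_I^\perp$.

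\emph{Directions $w\in\mathbb{C}_I^\perp$.} Since $\operatorname{Re}(x)$ and $|\operatorname{Im}(x)|$ have zero derivative along $w$ at $p$ (as $w$ is tangent to the sphere $\alpha+\beta\mathbb{S}_\mathbb{H}$), the differentials of $f^\circ_s$ and $f'_s$ vanish on $w$. The map $x\mapsto\operatorname{Im}(x)$ is real linear, with derivative $w$ in the direction $w$. Hence $df_p(w)=w\,f'_s(p)$.

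\emph{Directions $v\in\mathbb{C}_I$.} Here $f$ restricted to $\Omega_D\cap\mathbb{C}_I$ equals $\phi_I\circ F\circ\phi_I^{-1}$, i.e. $f(a+Ib)=F_0(a+e_1b)+IF_1(a+e_1b)$. By the Cauchy--Riemann equations~\eqref{eq:CR}, the derivative along $v=v_0+Iv_1$ is
\[
v_0\partial_\alpha F_0+v_1\partial_\beta F_0+I\bigl(v_0\partial_\alpha F_1+v_1\partial_\beta F_1\bigr)=(v_0+Iv_1)\bigl(\partial_\alpha F_0+I\partial_\alpha F_1\bigr).
\]
Moreover $\partial_\alpha F_0+I\partial_\alpha F_1=\phi_I(\partial F/\partial z)=f'_c(p)$, because for holomorphic $F$ we have $\partial F/\partial z=\partial F/\partial\alpha$. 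The commutation of $I$ with $v_0,v_1$ is harmless since these are real. This gives $df_p(v)=v\,f'_c(p)$, and linearity gives the stated formula for $df_p(v+w)$.

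\emph{Rank.} Left multiplication $v\mapsto vq$ by a fixed quaternion $q$ is either zero or injective on each of the real subspaces $\mathbb{C}_I$ and $\mathbb{C}_I^\perp$, so each summand contributes rank $0$ or $2$. To get $\rk df_p\in\{0,2,4\}$ we must exclude rank $3$, which requires showing that the two image planes $\mathbb{C}_I f'_c(p)$ and $\mathbb{C}_I^\perp f'_s(p)$ are either equal or complementary. Both are right translates of complex lines: they are $\mathbb{C}_I f'_c(p)$ and $(\mathbb{C}_I J)f'_s(p)$ for some $J\perp\mathbb{C}_I$. Each is a left $\mathbb{C}_I$-submodule of $\mathbb{H}$ of real dimension $2$, i.e. a $\mathbb{C}_I$-line in $\mathbb{H}\cong\mathbb{C}_I^2$ viewed as a left $\mathbb{C}_I$-vector space. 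Two $\mathbb{C}_I$-lines either coincide or intersect trivially, so the total rank is $0$, $2$, or $4$. This is the key observation, and it is the step I expect to need the most care: one must check that both images really are left $\mathbb{C}_I$-submodules, using that $\mathbb{C}_I^\perp=\mathbb{C}_I J$ is stable under left multiplication by $\mathbb{C}_I$.

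\emph{Real points.} At $p\in\mathbb{R}$ the coordinates above degenerate. Instead, restrict $f$ to $\mathbb{C}_J$ for every $J\in\mathbb{S}_\mathbb{H}$: the same Cauchy--Riemann computation gives $df_p(v)=vf'_c(p)$ for all $v\in\mathbb{C}_J$, and every $v\in\mathbb{H}$ lies in some $\mathbb{C}_J$. Since $f$ is $C^1$ (slice regular functions are real analytic by~\cite[Proposition 7]{perotti}), $df_p$ is linear, so it agrees everywhere with the linear map $v\mapsto vf'_c(p)$. That map has rank $0$ or $4$, because $\mathbb{H}$ is a division algebra.
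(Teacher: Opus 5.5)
Your proof is correct and follows the route the paper indicates. The paper does not prove this lemma itself but cites it from the monograph; just before Remark~\ref{rmk:differential} it sketches your computation of the directional derivatives along $\mathbb{C}_I$ (via the stem function and the Cauchy--Riemann equations) and along $\mathbb{C}_I^\perp$ (via the spherical representation formula), and your rank argument, that the image of $df_p$ is a left $\mathbb{C}_I$-subspace of $\mathbb{H}\cong\mathbb{C}_I^2$ and so has even real dimension, supplies the part the paper leaves to the citation.
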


Let us now extend Lemma~\ref{lem:differential} to cover the case when $f$ is not regular. The directional derivative of $f$ at $p\in\mathbb{C}_I$ in the real direction equals $\partial_cf(p)+\overline{\partial}_cf(p)$; the directional derivative of $f$ at $p$ along $I$ equals $I\,(\partial_cf(p)-\overline{\partial}_cf(p))=I\,\partial_cf(p)+I^c\,\overline{\partial}_cf(p)$; if $p\not\in\mathbb{R}$, the directional derivative of $f$ at $p$ along any $w\in\mathbb{C}_I^\perp$ still equals $w\,\partial_sf(p)$. We can therefore make the next remark.

\begin{rmk}\label{rmk:differential}
    Let $\Omega_D$ be a circular domain in $A=\mathbb{H}$, let $p=\phi_I(z)\in\mathbb{C}_I\setminus\mathbb{R}$, for some $I\in\mathbb{S}_\mathbb{H},z\in D$ and let us decompose $T_p\Omega_D\simeq\mathbb{H}$ as $\mathbb{C}_I\oplus\mathbb{C}_I^\perp$. If $f=\mathcal{I}(F)\in\mathcal{S}^1(\Omega_D)$, then
    \begin{align*}
        df_{p}(v+w)&=\alpha(\partial_c+\overline{\partial}_c)f(p)+\beta I(\partial_c-\overline{\partial}_c)f(p)+w\,\partial_sf(p)\\
        &=v\,\partial_cf(p)+v^c\,\overline{\partial}_cf(p)+w\,\partial_sf(p)
    \end{align*}
    for all $v=\alpha+I\beta\in\mathbb{C}_I,\ w\in\mathbb{C}_I^\perp$. In the special case when $f$ is a circular slice function (i.e., $F=F_0,F_1\equiv0$ and $f(\alpha+I\beta)=F_0(\alpha+e_1\beta)$ for all $\alpha+e_1\beta\in D$ and all $I\in\mathbb{S}_\mathbb{H}$), we get $\partial_sf\equiv0$ and
    \begin{equation*}
        df_{p}(\alpha+I\beta+w)=df_{p}(\alpha+I\beta)=\alpha\frac{\partial F_0}{\partial\alpha}(z)+\beta\frac{\partial F_0}{\partial\beta}(z)
        ,\qquad\forall \alpha,\beta\in\mathbb{R},I\in\mathbb{S}_\mathbb{H},\ w\in\mathbb{C}_I^\perp.
    \end{equation*}
    We point out that the last expression does not depend on the choice of $I$ in $\mathbb{S}_\mathbb{H}$ nor on the choice of $w\in\mathbb{C}_I^\perp$.

    If we pick, instead, $p\in\Omega_D\cap\mathbb{R}=D\cap\mathbb{R}$ and $v=\alpha+I\beta\in T_p\Omega_D\simeq\mathbb{H}$, then $df_{p}(v)=v\,\partial_cf(p)+v^c\,\overline{\partial}_cf(p)$ for general  $f\in\mathcal{S}^3(\Omega_D)\subset C^1(\Omega_D,A)$ and $df_{p}(v)=df_{p}(\alpha+I\beta)=\alpha\frac{\partial F_0}{\partial\alpha}(p)+\beta\frac{\partial F_0}{\partial\beta}(p)$ for any circular element $f=\mathcal{I}(F_0)$ of $\mathcal{S}^3(\Omega_D)$.
\end{rmk}

\subsection{Dual quaternionic slice regular functions}\label{subsec:dualquatenionicslicefunctions}

In this subsection we focus only on the real $\ast$-algebra (and bilateral $\mathbb{H}$-module) $\mathbb{DH}=\mathbb{H}+\mathbb{H}\epsilon=\mathbb{H}+\epsilon\mathbb{H}$ of dual quaternions. For stem functions, we will heavily use the decomposition $\mathbb{DH}_\mathbb{C}=\mathbb{H}_\mathbb{C}+\epsilon\mathbb{H}_\mathbb{C}$ and the fact that $\mathbb{H}_\mathbb{C}$ is a real $\ast$-subalgebra (and a bilateral $\mathbb{H}_\mathbb{C}$-submodule) of $\mathbb{DH}_\mathbb{C}$. We denote by $\pi_{\mathbb{H}_\mathbb{C}}:\mathbb{DH}_\mathbb{C}\to\mathbb{H}_\mathbb{C}$ the natural projection $\pi_{\mathbb{H}_\mathbb{C}}(a+e_1b)=\pi_\mathbb{H}(a)+e_1\pi_\mathbb{H}(b)$.

Slice and slice regular functions over $\mathbb{DH}$ have been studied in~\cite{gstdualquaternions}, whence we recall several results and definitions.

\begin{rmk}
    Let $f,g\in\mathcal{S}(\Omega_D)$. If $f\in\mathcal{S}_\mathbb{DR}(\Omega_D)$, then $(f\cdot g)(x)=f(x)g(x)=(g\cdot f)(x)$ for all $x\in\Omega_D$. In particular, $\mathcal{S}_\mathbb{DR}(\Omega_D)$ is the center of $\mathcal{S}(\Omega_D)$ and $\mathcal{SR}_\mathbb{DR}(\Omega_D)$ is the center of $\mathcal{SR}(\Omega_D)$.
\end{rmk}

\begin{lem}\label{lem:dualquaternioniczeros}
    Let $\Omega_D$ be a circular domain in $A=\mathbb{DH}$ and let $\alpha,\beta\in\mathbb{R}$ (with $\beta\neq0$) be such that $\alpha+\beta\mathbb{S}_\mathbb{DH}\subset\Omega_D$. If $f\in\mathcal{S}(\Omega_D)$, one of the following facts holds true:
    \begin{enumerate}
        \item $V(f)\cap(\alpha+\beta\mathbb{S}_\mathbb{DH})=\emptyset$;
        \item $V(f)\cap(\alpha+\beta\mathbb{S}_\mathbb{DH})$ is a singleton and the constant values of $f^\circ_s,f'_s$ in $\alpha+\beta\mathbb{S}_\mathbb{DH}$ are invertible;
        \item $V(f)\cap(\alpha+\beta\mathbb{S}_\mathbb{DH})=T_{\alpha+I\beta}$ for some $I\in\mathbb{S}_\mathbb{DH}$ and the constant values of $f^\circ_s,f'_s$ in $\alpha+\beta\mathbb{S}_\mathbb{DH}$ belong to $\epsilon\mathbb{H}^*$;
        \item $\alpha+\beta\mathbb{S}_\mathbb{DH}\subseteq V(f)$ and $f^\circ_s,f'_s$ both vanish identically in $\alpha+\beta\mathbb{S}_\mathbb{DH}$.
    \end{enumerate}
\end{lem}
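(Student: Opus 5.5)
Write $a\coloneqq f^\circ_s$ and $b\coloneqq f'_s$. Both are circular, so they take constant values $a,b\in\mathbb{DH}$ on the sphere $\alpha+\beta\mathbb{S}_\mathbb{DH}$. By formula~\eqref{eq:sphericalrepresentationformula}, every point $x=\alpha+J\beta$ of the sphere, with $J\in\mathbb{S}_\mathbb{DH}$, satisfies
\[f(x)=a+\beta Jb .\]
Hence $x\in V(f)$ if and only if $Jb=-\beta^{-1}a$. Since $\beta\neq0$, the whole question becomes: for which $J\in\mathbb{S}_\mathbb{DH}$ does the equation $Jb=c$ hold, where $c\coloneqq-\beta^{-1}a$? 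We will split according to whether $b$ is invertible, lies in $\epsilon\mathbb{H}^*$, or vanishes. These are all the possibilities, because $n^{-1}(0)=\epsilon\mathbb{H}$.

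\emph{Case $b$ invertible.} The equation forces $J=cb^{-1}$. If this element is not in $\mathbb{S}_\mathbb{DH}$, we are in case 1. Otherwise $V(f)$ meets the sphere in exactly one point. In that case $a=-\beta Jb$ is a product of invertible elements, since $n(J)=1$. So $a$ is invertible too, which gives case 2.

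\emph{Case $b=\epsilon b_2$ with $b_2\in\mathbb{H}^*$.} Write $J=J_1+\epsilon J_2$. Then $Jb=\epsilon J_1b_2$, so the equation says $c\in\epsilon\mathbb{H}$ and $J_1=c_2b_2^{-1}$, where $c=\epsilon c_2$. Thus there are no zeros unless $c_2b_2^{-1}\in\mathbb{S}_\mathbb{H}$. If it is, the solutions are exactly the $J=J_1+\epsilon J_2$ with $J_1$ fixed and $J_2\in\operatorname{Im}(\mathbb{H})$ orthogonal to $J_1$, which is arbitrary. So the zero set is $\alpha+\beta(J_1+\epsilon\mathbb{C}_{J_1}^\perp)$. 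Using the description of $\mathbb{S}_\mathbb{DH}$ and of $T_{x_1}$ from Subsection~\ref{subsec:dualquaternions}, this is $T_{\alpha+I\beta}$ with $I=J_1$; here one checks that $\beta\,\mathbb{C}_{J_1}^\perp=\mathbb{C}_{J_1}^\perp$. In this situation $c_2=J_1b_2\neq0$, so $a\in\epsilon\mathbb{H}^*$ as well, which gives case 3. When the zero set is empty we are in case 1.

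\emph{Case $b=0$.} The equation becomes $c=0$. Either $a\neq0$ and we get case 1, or $a=0=b$ and the whole sphere lies in $V(f)$, which is case 4.

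The only delicate point is the identification of the set $\{\alpha+\beta(J_1+\epsilon J_2)\}$ with the affine plane $T_{\alpha+J_1\beta}$, including the sign of $\beta$. This follows from the explicit description of $\mathbb{S}_\mathbb{DH}$ as $\bigcup_{J_1}T_{J_1}$.
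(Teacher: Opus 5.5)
Your proof is correct and takes the same route as the paper. The paper cites the earlier work on dual quaternions for this lemma and notes that it follows from the spherical representation formula $f(x)=f^\circ_s(x)+\operatorname{Im}(x)f'_s(x)$ together with the fact that $\epsilon\mathbb{H}^*$ is exactly the set of zero divisors of $\mathbb{DH}$; your case split on the constant value of $f'_s$ (invertible, in $\epsilon\mathbb{H}^*$, or zero) is precisely that argument written out, and the completeness of the split is most directly justified by $\mathbb{DH}\setminus\epsilon\mathbb{H}$ being the group of units rather than by $n^{-1}(0)=\epsilon\mathbb{H}$ alone.
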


\begin{ex}\label{ex:deltaslice}
    Fix $y\in Q_\mathbb{DH}\setminus\mathbb{R}$ and set $f(x)\coloneqq x-y$, which yields $f\in\mathcal{SR}^{pol}(Q_\mathbb{DH})$. Then $\epsilon f(x)=\epsilon(x-y_1)$ and $f^c(x)=x-y^c$. Moreover, $N(f)(x)=x^2-xt(y)+n(y)$ is an element of $\mathcal{SR}^{pol}_\mathbb{R}(Q_\mathbb{DH})$, which we denote by $\Delta_y(x)$. It is the restriction to $Q_\mathbb{DH}$ of the minimal polynomial of $y$. We have $V(f)=\{y\}$, $V(\epsilon f)=T_{y_1}$ and $V(\Delta_y)=\mathbb{S}^\mathbb{DH}_y$.
\end{ex}

Lemma~\ref{lem:dualquaternioniczeros} was proven in~\cite[Theorem 5.2]{gstdualquaternions}. It is a consequence of formula~\eqref{eq:sphericalrepresentationformula}, if we take into account the fact that $\epsilon\mathbb{H}^*$ is the set of zero divisors of $\mathbb{DH}$.

\begin{defn}
    Given a stem function $F\in Stem(D,\mathbb{DH}_\mathbb{C})$, we define $^\pi\!F\in Stem(D,\mathbb{H}_\mathbb{C})$ as $^\pi\!F\coloneqq \pi_{\mathbb{H}_\mathbb{C}}\circ F$.
    
    Given a slice function $f=\mathcal{I}_\mathbb{DH}(F):\Omega_D\to\mathbb{DH}$, its \emph{primal part} is the quaternionic slice function $^\pi\!f:\Omega_D\cap\mathbb{H}\to\mathbb{H}$ defined as the value at $^\pi\!F$ of $\mathcal{I}_\mathbb{H}:Stem(D,\mathbb{H}_\mathbb{C})\to\mathcal{S}(\Omega_D\cap\mathbb{H})$.
\end{defn}
Moreover,~\cite[Remark 4.7]{gstdualquaternions} proved that $\pi_\mathbb{H}\circ f = {^\pi\!f}\circ(\pi_\mathbb{H})_{|_{\Omega_D}}$. Taking into account that $\epsilon(y_1+\epsilon y_2)=\epsilon y_1$ for all $y_1,y_2\in\mathbb{H}$, we conclude that $\epsilon h(x_1+\epsilon x_2)=\epsilon\,^\pi\!h(x_1)$ for all $h\in\mathcal{S}(\Omega_D)$ and for all $x_1+\epsilon x_2\in\Omega_D$.

Our next aim is decomposing dual quaternionic stem functions and slice functions into appropriately chosen ``quaternionic'' components. We begin with stem functions.

\begin{rmk}
    $Stem(D,\mathbb{H}_\mathbb{C})$ is a real $^\ast$-subalgebra of $Stem(D,\mathbb{DH}_\mathbb{C})$. This is a consequence of the fact that addition, multiplication and conjugation of stem functions are defined pointwise. For the same reason,
    \begin{equation}\label{eq:stemdecomposition}
        Stem(D,\mathbb{DH}_\mathbb{C})=Stem(D,\mathbb{H}_\mathbb{C})+\epsilon Stem(D,\mathbb{H}_\mathbb{C})=Stem(D,\mathbb{H}_\mathbb{C})+ Stem(D,\mathbb{H}_\mathbb{C})\epsilon\,,
    \end{equation}
    where both sums are direct. Finally, $Stem(D,\mathbb{H}_\mathbb{C})=\{\pi_{\mathbb{H}_\mathbb{C}}\circ F: F\in Stem(D,\mathbb{DH}_\mathbb{C})\}$.
\end{rmk}

We are now in a position to define a special subclass within the class of slice functions over dual quaternions.

\begin{defn}
    We denote the image through $\mathcal{I}_\mathbb{DH}:Stem(D,\mathbb{DH}_\mathbb{C})\to\mathcal{S}(\Omega_D)$ of the real $^\ast$-subalgebra (and right $\mathbb{H}$-module) $Stem(D,\mathbb{H}_\mathbb{C})$ by the symbol $\mathcal{S}_{\mathbb{H}}(\Omega_D)$ and call its elements \emph{quaternion-preserving} slice functions $\Omega_D\to\mathbb{DH}$. We also set $\mathcal{S}^k_\mathbb{H}(\Omega_D)\coloneqq\mathcal{S}^k(\Omega_D)\cap \mathcal{S}_\mathbb{H}(\Omega_D)$, for $k\in\mathbb{N}\cup\{\infty,\omega\}$. Finally, we denote by $\mathcal{SR}_{\mathbb{H}}(\Omega_D)$ the real $^\ast$-subalgebra (and right $\mathbb{H}$-submodule) of $\mathcal{S}_{\mathbb{H}}(\Omega_D)$ consisting of slice regular functions.
\end{defn}

Our terminology is justified by the next remark, where we take into account~\eqref{eq:domainprojection} and denote by $\Omega_D$ the domain $\Omega_{D,\mathbb{DH}}$.

\begin{rmk}
    If $f\in\mathcal{S}_{\mathbb{H}}(\Omega_D)$, then $f(\Omega_D\cap\mathbb{H})\subseteq\mathbb{H}$. Moreover, the restriction of $f$ to $\Omega_D\cap\mathbb{H}$ is $^\pi\!f$.
\end{rmk}

It is easy to see that a circular slice function $f$, i.e., a slice function induced by a $\mathbb{DH}$-valued stem function $F=F_0$, is quaternion-preserving if, and only if, it takes values in $\mathbb{H}$. This is because $\mathbb{H}=\mathbb{DH}\cap\mathbb{H}_\mathbb{C}$. In general, quaternion-preserving slice functions can be characterized as follows.

\begin{rmk}
    A slice function $f=\mathcal{I}(F)$ is quaternion-preserving if, and only if, $f^\circ_s,f'_s$ take values in $\mathbb{H}$. This follows from two facts: on the one hand, $F$ takes values in $\mathbb{H}_\mathbb{C}$ if, and only if, $F_0,F_1$ take values in $\mathbb{H}$; on the other hand $f^\circ_s,f'_s$ are circular slice functions, induced respectively by $F_0$ and by $\alpha+e_1\beta\mapsto F_1(\alpha+e_1\beta)/\beta$.
\end{rmk}

In particular, the last remark tells us that case {\it 3} in Lemma~\ref{lem:dualquaternioniczeros} never occurs when $f$ is quaternion-preserving.

We are now ready for the announced decomposition of dual quaternionic slice functions into appropriately chosen ``quaternionic'' components. Taking into account the decomposition~\eqref{eq:stemdecomposition} and the fact that $\mathcal{I}_\mathbb{DH}:Stem(D,\mathbb{DH}_\mathbb{C})\to\mathcal{S}(\Omega_D)$ is a right $\mathbb{DH}$-module isomorphism, we conclude that (for $k\in\mathbb{N}\cup\{\infty,\omega\}$)
\begin{align}
    \mathcal{S}(\Omega_D)&=\mathcal{S}_{\mathbb{H}}(\Omega_D)+\mathcal{S}_{\mathbb{H}}(\Omega_D)\epsilon=\mathcal{S}_{\mathbb{H}}(\Omega_D)+\epsilon\mathcal{S}_{\mathbb{H}}(\Omega_D)\,,\notag\\
    \mathcal{S}^k(\Omega_D)&=\mathcal{S}^k_{\mathbb{H}}(\Omega_D)+\mathcal{S}^k_{\mathbb{H}}(\Omega_D)\epsilon=\mathcal{S}^k_{\mathbb{H}}(\Omega_D)+\epsilon\mathcal{S}^k_{\mathbb{H}}(\Omega_D)\,,\notag\\
    \mathcal{SR}(\Omega_D)&=\mathcal{SR}_{\mathbb{H}}(\Omega_D)+\mathcal{SR}_{\mathbb{H}}(\Omega_D)\epsilon=\mathcal{SR}_{\mathbb{H}}(\Omega_D)+\epsilon\mathcal{SR}_{\mathbb{H}}(\Omega_D)\,,\label{eq:decomposition of slice regular}
\end{align}
where all sums are direct.

\begin{defn}
    Given $f\in\mathcal{S}(\Omega_D)$, the unique elements $g$ and $h$ of $\mathcal{S}_\mathbb{H}(\Omega_D)$ such that $f=g+\epsilon h$ are called the first and second \emph{quaternion-preserving components} of $f$.
\end{defn}

By construction, the first quaternion-preserving component of $f=\mathcal{I}_\mathbb{DH}(F)$ is $g=\mathcal{I}_\mathbb{DH}(\pi_{\mathbb{H}_\mathbb{C}}\circ F)=\mathcal{I}_\mathbb{DH}(^\pi\!F)$. Thus, $g$ is the (unique) slice extension to $\Omega_D$ of the primal part $^\pi\!f=\mathcal{I}_\mathbb{H}(^\pi\!F):\Omega_D\cap\mathbb{H}\to \mathbb{H}$ of $f$.
If $f=g+\epsilon h\in\mathcal{S}(\Omega_D)$, then $f^\circ_s=g^\circ_s+\epsilon h^\circ_s$ and $f'_s=g'_s+\epsilon h'_s$ where $g^\circ_s,g'_s,h^\circ_s,h'_s$ are $\mathbb{H}$-valued circular slice functions, whence circular elements of $\mathcal{S}_\mathbb{H}(\Omega_D)$. This allows a new interpretation of the cases listed in Lemma~\ref{lem:dualquaternioniczeros}.

\begin{rmk}
    Let $f=g+\epsilon h\in\mathcal{S}(\Omega_D)$, with $g,h\in\mathcal{S}_\mathbb{H}(\Omega_D)$. If the set $V(f)\cap(\alpha+\beta\mathbb{S}_\mathbb{DH})$ is a singleton, then $g'_s\neq0$ in $\alpha+\beta\mathbb{S}_\mathbb{DH}$. If it equals $T_{\alpha+I\beta}$ for some $I\in\mathbb{S}_\mathbb{DH}$, then $g^\circ_s\equiv0\equiv g'_s$ in $\alpha+\beta\mathbb{S}_\mathbb{DH}$ and $h^\circ_s\neq0\neq h'_s$ in $\alpha+\beta\mathbb{S}_\mathbb{DH}$. Finally, if $\alpha+\beta\mathbb{S}_\mathbb{DH}\subseteq V(f)$, then $g^\circ_s=g'_s=h^\circ_s=h'_s\equiv0$ in $\alpha+\beta\mathbb{S}_\mathbb{DH}$.
\end{rmk}

We now prove a new result.

\begin{lem}\label{lem:decompositionusingsphericalderivative}
    Let $\Omega_D$ be a circular domain in $Q_{\mathbb{DH}}$ and let $f=g+\epsilon h\in\mathcal{S}(\Omega_D)$ (with $g,h\in\mathcal{S}_\mathbb{H}(\Omega_D)$). Then, for any $x=x_1+\epsilon x_2\in\Omega_D\setminus\mathbb{R}$, it holds
    \begin{equation*}
        f(x)=f(x_1)+\epsilon x_2 f'_s(x_1)=f(x_1)+\epsilon x_2 (^\pi\!f)'_s(x_1)=g(x_1)+\epsilon (h(x_1)+x_2 g'_s(x_1)).
    \end{equation*}
    In particular: if $g$ is circular, then $x_1+\epsilon x_2\mapsto f(x_1+\epsilon x_2)=g(x_1)+\epsilon h(x_1)$ is constant in $x_2$.
\end{lem}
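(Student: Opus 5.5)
The plan is to start from the spherical representation formula~\eqref{eq:sphericalrepresentationformula} and use the explicit structure of $Q_\mathbb{DH}\setminus\mathbb{R}$. Fix $x=x_1+\epsilon x_2\in\Omega_D\setminus\mathbb{R}$. Then $x_1\in\mathbb{H}\setminus\mathbb{R}$ and $x_2\in\operatorname{Im}(\mathbb{H})$ with $x_2\perp x_1$. By~\eqref{eq:realandimaginarypartdq} we have $\operatorname{Re}(x)=\operatorname{Re}(x_1)$ and $\operatorname{Im}(x)=\operatorname{Im}(x_1)+\epsilon x_2$. Moreover, $x$ and $x_1$ lie on the same sphere, since $\mathbb{S}^{\mathbb{DH}}_x=\mathbb{S}^{\mathbb{DH}}_{x_1}$, and $x_1\in\Omega_D$ by~\eqref{eq:domainprojection}. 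The circular functions $f^\circ_s,f'_s$ are constant on each sphere $\alpha+\beta\mathbb{S}_\mathbb{DH}$, so $f^\circ_s(x)=f^\circ_s(x_1)$ and $f'_s(x)=f'_s(x_1)$. Applying~\eqref{eq:sphericalrepresentationformula} at $x$ and at $x_1$ then gives
\[f(x)=f^\circ_s(x_1)+(\operatorname{Im}(x_1)+\epsilon x_2)f'_s(x_1)=f(x_1)+\epsilon x_2f'_s(x_1),\]
which is the first equality.

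For the second equality I use $\epsilon(y_1+\epsilon y_2)=\epsilon y_1$. This gives $\epsilon x_2 f'_s(x_1)=\epsilon x_2\,\pi_\mathbb{H}(f'_s(x_1))$, so it suffices to check that $\pi_\mathbb{H}(f'_s(x_1))=(^\pi\!f)'_s(x_1)$. The function $f'_s$ is induced by $F_1(z)/\beta$, and $\pi_\mathbb{H}$ is real linear, so $\pi_\mathbb{H}(F_1/\beta)$ is the stem function inducing $(^\pi\!f)'_s$, because $^\pi\!F$ has components $\pi_\mathbb{H}\circ F_0$ and $\pi_\mathbb{H}\circ F_1$. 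Since $x_1=\alpha+J_1\beta$ with $J_1\in\mathbb{S}_\mathbb{H}$ and the value is a circular function evaluated there, the claim follows. Alternatively, one can invoke $\pi_\mathbb{H}\circ f'_s={}^\pi(f'_s)\circ\pi_\mathbb{H}$ from~\cite[Remark 4.7]{gstdualquaternions}.

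For the third equality, write $f=g+\epsilon h$. Then $f(x_1)=g(x_1)+\epsilon h(x_1)$. The spherical derivative is linear and $\epsilon$ is central, so $f'_s=g'_s+\epsilon h'_s$, and $g'_s$ takes values in $\mathbb{H}$ because $g$ is quaternion-preserving. Hence $\epsilon x_2f'_s(x_1)=\epsilon x_2 g'_s(x_1)$, because $\epsilon\cdot\epsilon=0$ kills the $h'_s$ term. This gives $g(x_1)+\epsilon(h(x_1)+x_2g'_s(x_1))$. For the final claim: if $g$ is circular, then $g'_s\equiv0$, and the expression no longer depends on $x_2$.

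The only delicate point is the bookkeeping that $f^\circ_s$ and $f'_s$ take the same values at $x$ and at $x_1$. This relies on both points having the same $z=\alpha+e_1\beta\in D$, with imaginary units $J_1+\epsilon x_2\beta^{-1}$ and $J_1$ respectively. The rest is direct substitution.
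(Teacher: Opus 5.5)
Your proposal is correct and follows essentially the same route as the paper. Both arguments apply the spherical representation formula at $x$, use that $f^\circ_s,f'_s$ are circular together with $\mathbb{S}_x^{\mathbb{DH}}=\mathbb{S}_{x_1}^{\mathbb{DH}}$ and $\operatorname{Im}(x)=\operatorname{Im}(x_1)+\epsilon x_2$, and then reduce $\epsilon f'_s(x_1)$ to $\epsilon g'_s(x_1)=\epsilon\,({}^\pi\!f)'_s(x_1)$ via $\epsilon^2=0$. Your version additionally spells out details the paper leaves implicit, namely that $x_1\in\Omega_D$ and the stem-function justification of $\pi_\mathbb{H}(f'_s(x_1))=({}^\pi\!f)'_s(x_1)$.
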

\begin{proof}
    Formula~\eqref{eq:sphericalrepresentationformula} guarantees that $f(x)=f^\circ_s(x)+\operatorname{Im}(x)f'_s(x)$. Taking into account that $f^\circ_s,f'_s$ are circular slice functions, the equality $\mathbb{S}_{x}^\mathbb{DH}=\mathbb{S}_{x_1}^\mathbb{DH}$ and the second among equalities~\eqref{eq:realandimaginarypartdq}, we find that
    \begin{equation*}
        f(x)=f^\circ_s(x_1)+\left(\operatorname{Im}(x_1)+\epsilon x_2\right)f'_s(x_1)=f(x_1)+\epsilon x_2 f'_s(x_1).
    \end{equation*}
    Here, $\epsilon f'_s(x_1)=\epsilon g'_s(x_1)=\epsilon\,{^\pi\!f'_s}(x_1)$ and $f(x_1)=g(x_1)+\epsilon h(x_1)$. The first statement follows. Finally: if $g$ is circular, then $g'_s\equiv0$ and the general formula reduces to $f(x)=f(x_1)=g(x_1)+\epsilon h(x_1)$.
\end{proof}

Let $f=g+\epsilon h\in\mathcal{S}(\Omega_D)$ (with $g,h\in\mathcal{S}_\mathbb{H}(\Omega_D)$) and let $x=x_1+\epsilon x_2\in\Omega_D$ (with $x_1,x_2\in\mathbb{H}$). Then
\begin{align}\label{eq:primalpart}
    \pi_\mathbb{H}(f(x))&={^\pi\!f(x_1)}=g(x_1)=\pi_\mathbb{H}(g(x))\,,\\
    \pi_{\epsilon\mathbb{H}}(f(x))&=\pi_{\epsilon\mathbb{H}}(g(x))+\epsilon h(x)=
    \begin{cases}
        \epsilon (h(x_1)+x_2 g'_s(x_1)) & \text{if\ }x\in\Omega_D\setminus\mathbb{R}\\
        \epsilon h(x_1)& \text{if\ }x\in\Omega_D\cap\mathbb{R}
    \end{cases}\,.
\end{align}

%%%%%%%%%%%%%%%%%%%%%%%%%%%%%

\section{Beyond the quadratic cone}\label{sec:beyond}

While slice regular functions over dual quaternions have been studied in detail in~\cite{gstdualquaternions}, an interest remains in classes of $\mathbb{DH}$-valued functions with domains not limited to the quadratic cone $Q_{\mathbb{DH}}$. An example are the functions $\mathbb{DH}\to\mathbb{DH}$ associated to dual quaternionic polynomials. This interest will lead us to the introduction of a new function class in the forthcoming Section~\ref{sec:dualslice}. In preparation, the present section  establishes a useful decomposition of $\mathbb{DH}$ and sets up appropriate domains.

Our decomposition of $\mathbb{DH}$ is analogous to~\eqref{eq:decompositionofcone}, with $Q_A$ replaced by $\mathbb{DH}$ and the ``complex slices'' $\mathbb{C}_J$ replaced by ``dual complex slices'', defined as follows.

\begin{defn}
    Set $\mathbb{R}_\mathbb{C}^+\coloneqq\{a_1+e_1b_1:a_1,b_1\in\mathbb{R},b_1>0\}$ and $\mathbb{DR}_\mathbb{C}^+\coloneqq\mathbb{R}_\mathbb{C}^++\epsilon\mathbb{R}_\mathbb{C}=\{a_1+\epsilon a_2+e_1(b_1+\epsilon b_2):a_1,b_1,a_2,b_2\in\mathbb{R},b_1>0\}$. For any $J=J_1+\epsilon J_2\in\mathbb{S}_\mathbb{DH}$, we set
    \[\mathbb{DC}_J\coloneqq\phi_J(\mathbb{DR}_\mathbb{C})=\mathbb{C}_J+\epsilon\mathbb{C}_J=\spn(1,J,\epsilon,\epsilon J)=\spn(1,J,\epsilon,\epsilon J_1)\,.\]
    We also set $\mathbb{C}_J^+\coloneqq\phi_J(\mathbb{R}_\mathbb{C}^+)$ and $\mathbb{DC}_J^+\coloneqq\phi_J(\mathbb{DR}_\mathbb{C}^+)$.
\end{defn}

\begin{prop}\label{prop:decompositionintoDCs}
    For every choice of $I,J\in\mathbb{S}_\mathbb{DH}$, the following properties hold true.
    \begin{enumerate}
        \item $\mathbb{DC}_J$ is a $\ast$-subalgebra of $\mathbb{DH}$, isomorphic to the commutative $\ast$-algebra $\mathbb{DR}_\mathbb{C}$. In particular this is true for $\mathbb{DC}_i$, which is the $\ast$-algebra of dual complex numbers $\mathbb{DC}\coloneqq \mathbb{C}+\epsilon\mathbb{C}$.
        \item If $I=\pm J$, then $\mathbb{DC}_I=\mathbb{DC}_J$. If $I\neq\pm J$ but $I_1=\pm J_1$, then
        \begin{equation*}
            \mathbb{DR}\subset\mathbb{DC}_I\cap\mathbb{DC}_J=\spn(1,\epsilon,\epsilon I_1)\subset\mathbb{R}+\epsilon\mathbb{H}\,.
        \end{equation*}
        If $I_1\neq\pm J_1$, then $\mathbb{DR}=\mathbb{DC}_I\cap\mathbb{DC}_J\subset\mathbb{R}+\epsilon\mathbb{H}$. Finally, $\mathbb{DC}_I^+\cap\mathbb{DC}_J^+=\emptyset$ if $I\neq J$.
        \item $\mathbb{DH}=\bigcup_{J\in\mathbb{S}_\mathbb{DH}}\mathbb{DC}_J$ and the map 
        \begin{equation*}
            \Phi:\mathbb{S}_\mathbb{DH}\times\mathbb{DR}_\mathbb{C}\to\mathbb{DH},\quad(J,a+e_1b)\mapsto\phi_J(a+e_1b)=a+Jb
        \end{equation*}
        is surjective. So is the restriction $\Phi:\mathbb{S}_\mathbb{DH}\times\mathbb{R}_\mathbb{C}\to Q_\mathbb{DH}$. Moreover, for all $a,b=b_1+\epsilon b_2\in\mathbb{DR}$ with $b_1>0$ and for all $J\in\mathbb{S}_\mathbb{DH}$, the following equalities hold true:
        \begin{align*}
            &\Phi^{-1}(a+Jb)=\{(J,a+e_1b),(-J,a-e_1b)\}\,,\\
            &\Phi^{-1}(a+J\epsilon b_2)=(T_{J_1}\times\{a+e_1\epsilon b_2\})\cup(T_{-J_1}\times\{a-e_1\epsilon b_2\})\,,\\
            &\Phi^{-1}(a)=\mathbb{S}_\mathbb{DH}\times\{a\}\,.
        \end{align*}
        As a consequence, the following restrictions are homeomorphisms:
        \begin{align*}
            &\Phi:\mathbb{S}_\mathbb{DH}\times\mathbb{DR}_\mathbb{C}^+\to\bigcup_{J\in\mathbb{S}_\mathbb{DH}}\mathbb{DC}_J^+=\mathbb{DH}\setminus(\mathbb{R}+\epsilon\mathbb{H})\,,\\
            &\Phi:\mathbb{S}_\mathbb{DH}\times\mathbb{R}_\mathbb{C}^+\to\bigcup_{J\in\mathbb{S}_\mathbb{DH}}\mathbb{C}_J^+=Q_\mathbb{DH}\setminus\mathbb{R}\,.
        \end{align*}
    \end{enumerate}
\end{prop}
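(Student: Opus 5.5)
The plan is to reduce everything to explicit coordinates. Write $J=J_1+\epsilon J_2$ with $J_1\in\mathbb{S}_\mathbb{H}$, $J_2\in\operatorname{Im}(\mathbb{H})$ and $J_2\perp J_1$, and write dual numbers as $a=a_1+\epsilon a_2$, $b=b_1+\epsilon b_2$. Then
\[
a+Jb=(a_1+J_1b_1)+\epsilon\,(a_2+J_1b_2+J_2b_1).
\]
This single formula drives all three items. The key observation is that its $\epsilon$-part splits orthogonally in $\mathbb{H}=\mathbb{C}_{J_1}\oplus\mathbb{C}_{J_1}^\perp$: the component $a_2+J_1b_2$ lies in $\mathbb{C}_{J_1}$ and $J_2b_1$ lies in $\mathbb{C}_{J_1}^\perp$.

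\emph{Item 1.} Since $a,b\in\mathbb{DR}$ are central and self-conjugate, Remark~\ref{rmk:phiJ} shows that $\phi_J$ restricted to $\mathbb{DR}_\mathbb{C}$ is multiplicative and intertwines $\bar{\ }$ with $^c$. Here we use $J^2=-1$, which follows from $J\in\mathbb{S}_\mathbb{DH}$. For injectivity, suppose $a+Jb=0$. The primal part $a_1+J_1b_1=0$ gives $a_1=b_1=0$. The $\epsilon$-part then reduces to $a_2+J_1b_2=0$, so $a_2=b_2=0$. The image is therefore a $\ast$-subalgebra isomorphic to $\mathbb{DR}_\mathbb{C}$. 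The equality $\epsilon J=\epsilon J_1$ gives the last spanning set, and the case $J=i$ gives $\mathbb{DC}$.

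\emph{Item 2.} The case $I=\pm J$ is immediate. Suppose $a+Ib=c+Jd$. Comparing primal parts, $a_1+I_1b_1=c_1+J_1d_1$.

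If $I_1\neq\pm J_1$, then $b_1=d_1=0$ and $a_1=c_1$. The $\epsilon$-parts then read $a_2+I_1b_2=c_2+J_1d_2$, which forces $b_2=d_2=0$, so the intersection is $\mathbb{DR}$.

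If $I_1=\pm J_1$, we replace $J$ by $-J$ if needed to get $I_1=J_1$. The primal comparison then gives $b_1=d_1$. The $\epsilon$-comparison, projected onto $\mathbb{C}_{I_1}^\perp$, gives $(I_2-J_2)b_1=0$ with $I_2\neq J_2$, so $b_1=0$. The remaining freedom is exactly $\spn(1,\epsilon,\epsilon I_1)$.

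For the disjointness of $\mathbb{DC}_I^+$ and $\mathbb{DC}_J^+$, the same computation with $b_1,d_1>0$ forces $I_1=J_1$ (the case $I_1=-J_1$ would force $b_1=-d_1$), hence $b_1=d_1$. Projecting onto $\mathbb{C}_{I_1}^\perp$ then gives $I_2b_1=J_2b_1$, so $I=J$.

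\emph{Item 3.} Surjectivity will be shown constructively. Take $x=x_1+\epsilon x_2$.

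If $x_1\notin\mathbb{R}$, set
\[
a_1=\operatorname{Re}x_1,\quad b_1=|\operatorname{Im}x_1|,\quad J_1=\operatorname{Im}x_1/b_1.
\]
Then decompose $x_2$ along $1$, $J_1$ and $\mathbb{C}_{J_1}^\perp$. This gives $a_2=\operatorname{Re}x_2$, $b_2=\langle x_2,J_1\rangle$, and $J_2=(\text{projection of }x_2\text{ onto }\mathbb{C}_{J_1}^\perp)/b_1$. By construction $J_2$ is imaginary and orthogonal to $J_1$.

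If $x_1\in\mathbb{R}$ and $\operatorname{Im}x_2\neq0$, take $b_1=0$, $J_1=\pm\operatorname{Im}x_2/|\operatorname{Im}x_2|$ and $J_2$ arbitrary. This is precisely what yields the fibre $T_{\pm J_1}\times\{a\pm e_1\epsilon b_2\}$.

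If $x\in\mathbb{DR}$, every $J$ works, which gives $\Phi^{-1}(a)=\mathbb{S}_\mathbb{DH}\times\{a\}$.

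The fibre descriptions follow from the same bookkeeping as in Item 2, using uniqueness of the orthogonal decomposition. The statement for $Q_\mathbb{DH}$ is formula~\eqref{eq:decompositionofcone} together with the uniqueness just shown.

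For the homeomorphism claims, the restricted maps are continuous bijections, with bijectivity coming from the first fibre formula with $b_1>0$. The inverse is given by the explicit formulas above, which are continuous wherever $\operatorname{Im}x_1\neq0$, i.e. on $\mathbb{DH}\setminus(\mathbb{R}+\epsilon\mathbb{H})$. Restricting to $b_2=0=a_2$ gives the analogous statement for $Q_\mathbb{DH}\setminus\mathbb{R}$.

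The main obstacle I expect is the careful case analysis in Item 2 and in the degenerate fibres of Item 3, namely $b_1=0$ with $b_2\neq0$. There the $\mathbb{C}_{J_1}^\perp$-component is what produces the affine plane $T_{J_1}$ of admissible $J$, and sign conventions must be tracked precisely.
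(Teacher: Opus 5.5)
Your proposal is correct and follows essentially the paper's route. Item 1 uses Remark~\ref{rmk:phiJ} plus injectivity, the intersections and fibres come from component comparison, and surjectivity and the homeomorphisms come from an explicit inverse; the differences are cosmetic. You certify $J\in\mathbb{S}_{\mathbb{DH}}$ via the splitting $\mathbb{H}=\mathbb{C}_{J_1}\oplus\mathbb{C}_{J_1}^\perp$ rather than by computing $t(J)=0$, $n(J)=1$ for $J=(2b)^{-1}(x-x^c)$, and the two constructions give the same $J$. In item 2 you compare general elements rather than spanning vectors, which also makes the case $I=-J$ of the disjointness claim explicit. Your case split also correctly shows that the second fibre formula needs $b_2\neq0$: for $b_2=0$ the fibre is all of $\mathbb{S}_{\mathbb{DH}}\times\{a\}$.
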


\begin{proof}
    \begin{enumerate}
        \item Using Remark~\ref{rmk:phiJ} and the fact that $a,b\in\mathbb{DR},a+Jb=0$ imply $a=0=b$, it is easy to see that the restriction of $\phi_J$ to $\mathbb{DR}_\mathbb{C}$ is a $^\ast$-algebra isomorphism from $(\mathbb{DR}_\mathbb{C},+,\cdot,\bar{\phantom{z}})$ to $(\mathbb{DC}_J,+,\cdot,\,^c)$.
        \item The inclusion $\mathbb{DC}_I\cap\mathbb{DC}_J\supseteq\spn(1,\epsilon)=\mathbb{DR}$ follows by definition. We are left with studying the intersection $\spn(I,\epsilon I_1)\cap\spn(J,\epsilon J_1)$. For $\lambda,\mu\in\mathbb{R}$, we have $\epsilon I_1=\lambda J+\mu J_1=\lambda J_1+\epsilon(\lambda J_2+\mu J_1)$ if, and only if, $\lambda=0,I_1=\mu\epsilon J_1$, which is the same as $I_1=\pm J_1$. Similarly, for $\lambda,\mu\in\mathbb{R}$, the equality $I=I_1+\epsilon I_2=\lambda J+\mu\epsilon J_1=\lambda J_1+\epsilon(\lambda J_2+\mu J_1)$ is equivalent to $I_1=\lambda J_1, I_2=\lambda J_2+\mu J_1$, which is the same as $I_1=\pm J_1, I_2=\pm J_2+\mu J_1$. Taking into account that $I_1\perp I_2$, the last equalities are equivalent to $I=\pm J$.\\
        Since $\mathbb{DC}_J^+\cap(\mathbb{R}+\epsilon\mathbb{H})=\emptyset$, it follows at once that $\mathbb{DC}_I^+\cap\mathbb{DC}_J^+=\emptyset$ when $I\neq J$.
        \item The set $\Phi(\mathbb{S}_\mathbb{DH}\times\mathbb{DR}_\mathbb{C}^+)=\bigcup_{J\in\mathbb{S}_\mathbb{DH}}\mathbb{DC}_J^+$ coincides with $\mathbb{DH}\setminus(\mathbb{R+\epsilon\mathbb{H}})$ because every $x=x_1+\epsilon x_2\in\mathbb{DH}\setminus(\mathbb{R+\epsilon\mathbb{H}})$ can be expressed as $x=a+Jb=a+bJ$, where $a\coloneqq\frac{t(x)}{2}=\operatorname{Re}(x_1)+\epsilon\operatorname{Re}(x_2)\in\mathbb{DR}$, $b\coloneqq|\operatorname{Im}(x_1)|+\epsilon\frac{\langle\operatorname{Im}(x_1),x_2\rangle}{|\operatorname{Im}(x_1)|}\in\mathbb{DR}$ and $J\coloneqq(x-a)b^{-1}=b^{-1}(x-a)=(2b)^{-1}(x-x^c)$. This $J$ is, indeed, an element of $\mathbb{S}_\mathbb{DH}$ because
        \begin{align*}
            t(J)&=(2b)^{-1}t(x-x^c)=(2b)^{-1}(x+x^c-x^c-x)=0\,,\\
            n(J)&=(2b)^{-2}n(x-x^c)=b^{-2}n(\operatorname{Im}(x_1)+\epsilon\operatorname{Im}(x_2))\\
            &=(|\operatorname{Im}(x_1)|^2+2\epsilon\langle\operatorname{Im}(x_1),x_2\rangle)^{-1}(|\operatorname{Im}(x_1)|^2+2\epsilon\langle\operatorname{Im}(x_1),\operatorname{Im}(x_2)\rangle)=1
        \end{align*}
        Using property {\it 2}, we see that $x=a+Jb\in\mathbb{DC}_J^+$ belongs to $\mathbb{DC}_I$ if, and only if, $I=\pm J$. Thus, such an $x$ has exactly two possible decompositions: namely, $x=a+Jb$ and $x=a+(-J)(-b)$. We conclude that $\Phi^{-1}(a+Jb)=\{(J,a+e_1b),(-J,a-e_1b)\}$ and that $\Phi_{|_{\mathbb{S}_\mathbb{DH}\times\mathbb{DR}_\mathbb{C}^+}}$ is bijective. Its inverse
         \begin{align*}
            &\Psi:\mathbb{DH}\setminus(\mathbb{R+\epsilon\mathbb{H}})\to\mathbb{S}_\mathbb{DH}\times\mathbb{DR}_\mathbb{C}^+,\\
            &x\mapsto\left(\frac{x-x^c}{2}\left(|\operatorname{Im}(x_1)|+\epsilon\frac{\langle\operatorname{Im}(x_1),x_2\rangle}{|\operatorname{Im}(x_1)|}\right)^{-1},\ \frac{x+x^c}{2}+e_1\left(|\operatorname{Im}(x_1)|+\epsilon\frac{\langle\operatorname{Im}(x_1),x_2\rangle}{|\operatorname{Im}(x_1)|}\right)\right)
        \end{align*}
        is clearly continuous. So is the restriction $\Psi:Q_\mathbb{DH}\setminus\mathbb{R}\to\mathbb{S}_\mathbb{DH}\times\mathbb{R}_\mathbb{C}^+$.\\
        Additionally: if $x\in\mathbb{R}+\epsilon(\mathbb{H}\setminus\mathbb{R})$, then $x=a+Jb$ with $a,b\in\mathbb{DR}$ is equivalent to $a=\frac{t(x)}{2}$, $b=\pm\epsilon|\operatorname{Im}(x_2)|$ and $J\in T_{\pm\frac{\operatorname{Im}(x_2)}{|\operatorname{Im}(x_2)|}}$; if $x\in\mathbb{DR}$, then $x=a+Jb$ with $a,b\in\mathbb{DR}$ is equivalent to $a=\frac{t(x)}{2},b=0,J\in\mathbb{S}_\mathbb{DH}$. It follows at once that $\Phi(\mathbb{S}_\mathbb{DH}\times\mathbb{DR}_\mathbb{C})=\bigcup_{J\in\mathbb{S}_\mathbb{DH}}\mathbb{DC}_J$ equals $\mathbb{DH}$ and that $\Phi^{-1}(a+J\epsilon b_2)=(T_{J_1}\times\{a+e_1\epsilon b_2\})\cup(T_{-J_1}\times\{a-e_1\epsilon b_2\})$ and $\Phi^{-1}(a)=\mathbb{S}_\mathbb{DH}\times\{a\}$ for all $a\in\mathbb{DR},\epsilon b_2\in\epsilon\mathbb{R}$.\qedhere
    \end{enumerate}
\end{proof}

We now define and study a useful projection from $\mathbb{DH}$ onto its quadratic cone.

\begin{defn}
     The map $\pi_Q:\mathbb{DH}\to Q_{\mathbb{DH}}$ is defined to make the following diagram commutative:
\begin{center}
    \begin{tikzcd}[ampersand replacement=\&,cramped]
    {\mathbb{S}_\mathbb{DH}\times\mathbb{DR}_\mathbb{C}} \&\& {\mathbb{S}_\mathbb{DH}\times\mathbb{R}_\mathbb{C}} \\
	\& \circlearrowleft \\
	{\mathbb{DH}} \&\& {Q_\mathbb{DH}}
	\arrow["{(id,\pi_{\mathbb{H}_\mathbb{C}})}", from=1-1, to=1-3]
	\arrow["\Phi"', from=1-1, to=3-1]
	\arrow["\Phi", from=1-3, to=3-3]
	\arrow["{\pi_Q}"', from=3-1, to=3-3]
    \end{tikzcd}
\end{center}
In other words, we define $\pi_Q(a+Jb)\coloneqq a_1+Jb_1$ for all $a=a_1+\epsilon a_2,b=b_1+\epsilon b_2\in\mathbb{DR}$ and all $J\in\mathbb{S}_\mathbb{DH}$.
\end{defn}

The map $\pi_Q$ is well defined because: if $b_1\neq0$, then $a'+J'b'=a+Jb$ is equivalent to $a'=a,b'=\pm b,J'=\pm J$, whence $a'_1+J'b'_1=a_1+Jb_1$; if $b_1=0$, then $a'+J'b'=a+Jb$ implies $a'=a,b_1'=0$, whence $a'_1+J'b'_1=a'_1=a_1=a_1+Jb_1$. We now propose an alternative presentation of $\pi_Q$, based upon the decomposition $\mathbb{DH}=\bigcup_{x_1\in\mathbb{H}}H_{x_1}$. We recall that $H_{x_1}\supset T_{x_1}$ for all $x_1\in\mathbb{H}\setminus\mathbb{R}$.

\begin{rmk}\label{rmk:Qprojection}
    If $x_1\in\mathbb{R}$, then $\pi_Q(x_1+\epsilon x_2)=x_1$. If we fix, instead, $x_1\in\mathbb{H}\setminus\mathbb{R}$, then the restriction of $\pi_Q$ to the affine $4$-space $H_{x_1}$ is the orthogonal projection onto the affine $2$-plane $T_{x_1}$. In particular, $\pi_\mathbb{H}\circ\pi_Q=\pi_\mathbb{H}$.
\end{rmk}

If $x_1\in\mathbb{C}_{J_1}\setminus\mathbb{R}$ with $J_1\in\mathbb{S}_\mathbb{H}$ and, if we decompose $x_2\in\mathbb{H}$ as $x_2=x_2^\parallel+x_2^\perp$ with $x_2^\parallel\in\mathbb{C}_{J_1},x_2^\perp\in\mathbb{C}_{J_1}^\perp$, then
\begin{equation*}
    \pi_Q(x_1+\epsilon x_2)=x_1+\epsilon x_2^\perp=x_1+\epsilon(x_2-\langle x_2, 1\rangle-\langle x_2, J_1\rangle J_1).
\end{equation*}
In real components: if $x=r_0+ir_1+jr_2+kr_3+\epsilon(r_4+ir_5+jr_6+kr_7)\in \mathbb{DH}\setminus(\mathbb{R}+\epsilon\mathbb{H})$, then
\begin{equation*}
    \pi_Q(x)=r_0+ir_1+jr_2+kr_3+\epsilon\left(ir_5+jr_6+kr_7-\frac{r_1r_5+r_2r_6+r_3r_7}{r_1^2+r_2^2+r_3^2}(ir_1+jr_2+kr_3)\right);
\end{equation*}
while if $x=r_0+\epsilon(r_4+ir_5+jr_6+kr_7)\in\mathbb{R}+\epsilon\mathbb{H}$, then $\pi_Q(x)=r_0$. We are now ready for the next proposition.

\begin{prop}\label{prop:Qprojection}
    $\pi_Q:\mathbb{DH}\to Q_{\mathbb{DH}}$ is a quasi-open surjective map, which coincides in $Q_{\mathbb{DH}}$ with the identity map. On the one hand, $\pi_Q$ is not continuous at any point of the vector $5$-space $\mathbb{R}+\epsilon\mathbb{H}$. On the other hand, the restriction $\pi_Q:\mathbb{DH}\setminus(\mathbb{R}+\epsilon\mathbb{H})\to Q_{\mathbb{DH}}\setminus\mathbb{R}$ is a trivial vector fiber bundle with fiber $\epsilon\mathbb{R}_\mathbb{C}$.
\end{prop}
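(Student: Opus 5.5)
We treat the claims in the order: surjectivity and identity on $Q_{\mathbb{DH}}$, the bundle structure off $\mathbb{R}+\epsilon\mathbb{H}$, discontinuity on $\mathbb{R}+\epsilon\mathbb{H}$, and finally quasi-openness.

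\emph{Surjectivity and identity on the cone.} If $x\in Q_{\mathbb{DH}}$, Proposition~\ref{prop:decompositionintoDCs} (restriction of $\Phi$ to $\mathbb{S}_\mathbb{DH}\times\mathbb{R}_\mathbb{C}$) writes $x=a_1+Jb_1$ with $a_1,b_1\in\mathbb{R}$. Hence $\pi_Q(x)=x$ by definition, and surjectivity follows at once.

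\emph{Bundle structure.} We use the homeomorphisms $\Phi:\mathbb{S}_\mathbb{DH}\times\mathbb{DR}_\mathbb{C}^+\to\mathbb{DH}\setminus(\mathbb{R}+\epsilon\mathbb{H})$ and $\Phi:\mathbb{S}_\mathbb{DH}\times\mathbb{R}_\mathbb{C}^+\to Q_\mathbb{DH}\setminus\mathbb{R}$ from Proposition~\ref{prop:decompositionintoDCs}. In these coordinates $\pi_Q$ becomes $(\mathrm{id},\pi_{\mathbb{H}_\mathbb{C}})$. Since $\mathbb{DR}_\mathbb{C}^+=\mathbb{R}_\mathbb{C}^+\times\epsilon\mathbb{R}_\mathbb{C}$, this map is the projection of the product $(\mathbb{S}_\mathbb{DH}\times\mathbb{R}_\mathbb{C}^+)\times\epsilon\mathbb{R}_\mathbb{C}$ onto its first factor. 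The global trivialization is therefore $x\mapsto(\pi_Q(x),\pi_{\epsilon\mathbb{H}_\mathbb{C}}(\text{second component of }\Psi(x)))$, which is linear on each fiber. Thus the restriction is a trivial vector bundle with fiber $\epsilon\mathbb{R}_\mathbb{C}$. This also shows that $\pi_Q$ is continuous and open on this open set.

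\emph{Discontinuity on $\mathbb{R}+\epsilon\mathbb{H}$.} Fix $x=r+\epsilon x_2$ with $r\in\mathbb{R}$, so $\pi_Q(x)=r$. We choose $J_1\in\mathbb{S}_\mathbb{H}$ and $w\in\operatorname{Im}\mathbb{H}$ with $w\perp J_1$, $w\neq0$, and consider the sequence $x^{(n)}=r+J_1/n+\epsilon(x_2+w)$. It lies off $\mathbb{R}+\epsilon\mathbb{H}$ and converges to $r+\epsilon(x_2+w)$, a point of $\mathbb{R}+\epsilon\mathbb{H}$ with $\pi_Q=r$. By Remark~\ref{rmk:Qprojection},
\begin{equation*}
\pi_Q(x^{(n)})=r+J_1/n+\epsilon\big(x_2+w\big)^\perp,
\end{equation*}
whose limit has $\epsilon$-part $(x_2+w)^\perp\neq0$ for a suitable choice. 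Better, we take $y^{(n)}=r+J_1/n+\epsilon x_2+\epsilon w$ converging to $x+\epsilon w$, and then shrink $w$, using the approach through $r+J_1/n+\epsilon(x_2+w)$ with $w\perp J_1$ fixed and $x_2$ adjusted. The cleanest route is the following: the sequence $z^{(n)}=r+J_1/n+\epsilon(x_2^{J_1\text{-}\parallel}+x_2^{\perp})$ converges to $x$, while $\pi_Q(z^{(n)})\to r+\epsilon x_2^\perp$, where $x_2^\perp$ is the component of $x_2$ orthogonal to $\mathbb{C}_{J_1}$. We choose $J_1$ so that $x_2^\perp\ne0$; this is always possible, e.g.\ with $J_1\perp\operatorname{Im}x_2$ when $x_2\notin\mathbb{R}$. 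If $x_2\in\mathbb{R}$, we use instead $z^{(n)}=r+J_1/n+\epsilon(x_2+w_0)$ with $w_0\perp J_1$. Here $\pi_Q(z^{(n)})\to r+\epsilon w_0$, and letting $w_0\to0$ through a diagonal sequence (for example $w_0=w/\sqrt n$, scaled so that $\epsilon w_0$ is still retained: $z^{(n)}=r+J_1/n^2+\epsilon(x_2+w/n)$ with $|w|=1$) gives limit $r$. This case therefore needs more care. We instead note that the fiber of $\pi_Q$ over $r+\epsilon v$ for fixed $v\perp J_1$ contains points arbitrarily close to any $r+\epsilon x_2'$ with $x_2'^{\perp}=v$. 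Consequently $\pi_Q(B(x,\delta))$ contains $r+\epsilon v$ for all small $v$, while it should lie in $B(r,\eta)$ for continuity. We thus obtain a contradiction by taking $v$ of fixed norm: the point $r+J_1\delta'+\epsilon(x_2+v)$ lies in $B(x,\delta)$ only if $|v|<\delta$. Hence the honest witness is $\pi_Q$ failing continuity as a map to $Q_{\mathbb{DH}}$ with respect to the $\epsilon$-component. We expect this step, choosing the right witness sequence so that the limit of $\pi_Q$ differs from $r$, to be the main obstacle. The plan is to use $z^{(n)}=r+J_1/n+\epsilon(x_2+nJ_1^\ast)$? No; the correct witness is $z^{(n)}=r+\tfrac1n J_1+\epsilon(x_2+ K)$ with $K$ a unit imaginary unit orthogonal to $J_1$ and constant. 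But then the limit $r+\epsilon(x_2+K)\neq x$. So we will use a point with nonzero $\operatorname{Im}x_2$ and treat $x_2\in\mathbb{R}$ via the images $r+J_1/n+\epsilon(\cdot)$, showing that the image of every neighborhood of $x$ has $\epsilon$-components filling a ball in $\operatorname{Im}\mathbb{H}$ of radius comparable to the neighborhood. In that case discontinuity would fail, so we must recheck the definition of continuity intended; we expect the statement to hinge on the case $\operatorname{Im}x_2\neq0$, where the witness above works, and on density of such points for the general claim.

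\emph{Quasi-openness.} The image of any nonempty open $U$ meets $\mathbb{DH}\setminus(\mathbb{R}+\epsilon\mathbb{H})$ in a nonempty open set, since $\mathbb{R}+\epsilon\mathbb{H}$ has empty interior. On that set $\pi_Q$ is an open map by the bundle structure, so $\pi_Q(U)$ has nonempty interior in $Q_\mathbb{DH}$.
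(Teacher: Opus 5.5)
Your treatment of surjectivity and of the bundle structure matches the paper: you transport $(\mathrm{id},\pi_{\mathbb{H}_\mathbb{C}})$ through the two homeomorphisms $\Phi$ of Proposition~\ref{prop:decompositionintoDCs}. Your quasi-openness argument also matches: $\mathbb{R}+\epsilon\mathbb{H}$ has empty interior, so $U\setminus(\mathbb{R}+\epsilon\mathbb{H})$ is a nonempty open set on which $\pi_Q$ is open. One slip: it is $U$, not its image, that meets $\mathbb{DH}\setminus(\mathbb{R}+\epsilon\mathbb{H})$.

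The discontinuity paragraph, however, is not a proof. It is a sequence of abandoned attempts ending in an appeal to ``density of such points''. Discontinuity at every point of a set never follows from discontinuity on a dense subset. For $x=r+\epsilon x_2$ with $\operatorname{Im}(x_2)\neq0$, your witness does work. It is the sequential form of the paper's observation that $\frac{r_1r_5+r_2r_6+r_3r_7}{r_1^2+r_2^2+r_3^2}(ir_1+jr_2+kr_3)$ has no limit as $(r_1,r_2,r_3)\to0$. Take $J_1\in\mathbb{S}_\mathbb{H}$ orthogonal to $\operatorname{Im}(x_2)$. Then $x+J_1/n\to x$, while $\pi_Q(x+J_1/n)=r+J_1/n+\epsilon\operatorname{Im}(x_2)\to r+\epsilon\operatorname{Im}(x_2)\neq r=\pi_Q(x)$.

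The case $x_2\in\mathbb{R}$, i.e.\ $x\in\mathbb{DR}$, cannot be completed, because $\pi_Q$ is continuous there; your suspicion was correct. For $z$ near $x$, either $z\in\mathbb{R}+\epsilon\mathbb{H}$ and $\pi_Q(z)=z_1$, or $\pi_Q(z)=z_1+\epsilon z_2^\perp$ with $|z_2^\perp|\le|\operatorname{Im}(z_2)|$. In both cases $|\pi_Q(z)-\pi_Q(x)|\le|z_1-x_1|+|\operatorname{Im}(z_2)|$, which tends to $0$ because $\operatorname{Im}(x_2)=0$.

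So the statement is false as written on the $2$-plane $\mathbb{DR}\subset\mathbb{R}+\epsilon\mathbb{H}$. The paper's own proof has the same hole: its displayed quotient vanishes identically when $r_5=r_6=r_7=0$, so that limit does exist. Rather than hunting for a witness or reinterpreting continuity, state and prove the corrected claim. The map $\pi_Q$ is discontinuous exactly at the points of $\mathbb{R}+\epsilon(\mathbb{H}\setminus\mathbb{R})=(\mathbb{R}+\epsilon\mathbb{H})\setminus\mathbb{DR}$. It is continuous everywhere else: on $\mathbb{DH}\setminus(\mathbb{R}+\epsilon\mathbb{H})$ by the bundle structure, and on $\mathbb{DR}$ by the estimate above. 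The remaining assertions of the proposition are unaffected.
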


\begin{proof}
By direct inspection, the restriction of the map $\pi_Q$ to $Q_{\mathbb{DH}}$ is the identity map. It follows immediately that $\pi_Q$ is surjective. Moreover, $\pi_Q$ is not continuous at any point $r_0+\epsilon(r_4+ir_5+jr_6+kr_7)$ of the vector $5$-space $\mathbb{R}+\epsilon\mathbb{H}$ because
\begin{equation*}
    \lim_{(r_1,r_2,r_3)\to(0,0,0)}\frac{r_1r_5+r_2r_6+r_3r_7}{r_1^2+r_2^2+r_3^2}(ir_1+jr_2+kr_3)
\end{equation*}
does not exist.

Now, we have a commutative diagram
\[\begin{tikzcd}[ampersand replacement=\&,cramped]
	{\mathbb{S}_\mathbb{DH}\times\mathbb{DR}_\mathbb{C}^+} \&\& {\mathbb{S}_\mathbb{DH}\times\mathbb{R}_\mathbb{C}^+} \\
	\& \circlearrowleft \\
	{\mathbb{DH}\setminus(\mathbb{R}+\epsilon\mathbb{H})} \&\& {Q_\mathbb{DH}\setminus\mathbb{R}}
	\arrow["{(id,\pi_{\mathbb{H}_\mathbb{C}})}", from=1-1, to=1-3]
	\arrow["\Phi"', from=1-1, to=3-1]
	\arrow["\Phi", from=1-3, to=3-3]
	\arrow["{\pi_Q}"', from=3-1, to=3-3]
\end{tikzcd}\]
where the arrows marked with $\Phi$ are homeomorphisms. Clearly,
\[(id,\pi_{\mathbb{H}_\mathbb{C}}):\mathbb{S}_\mathbb{DH}\times\mathbb{DR}_\mathbb{C}^+=\mathbb{S}_\mathbb{DH}\times(\mathbb{R}_\mathbb{C}^++\epsilon\mathbb{R}_\mathbb{C})\to\mathbb{S}_\mathbb{DH}\times\mathbb{R}_\mathbb{C}^+\]
is a trivial fiber bundle with fiber $\epsilon\mathbb{R}_\mathbb{C}$, whose trivialization is 
\[((id,\pi_{\mathbb{H}_\mathbb{C}}),\pi_{\epsilon\mathbb{H}_\mathbb{C}}\circ\pi_2):\mathbb{S}_\mathbb{DH}\times\mathbb{DR}_\mathbb{C}^+\to(\mathbb{S}_\mathbb{DH}\times\mathbb{R}_\mathbb{C}^+)\times\epsilon\mathbb{R}_\mathbb{C}\,,\]
where $\pi_2:\mathbb{S}_\mathbb{DH}\times\mathbb{DR}_\mathbb{C}^+\to\mathbb{DR}_\mathbb{C}^+$ is the projection on the second component. Thus $\pi_Q:\mathbb{DH}\setminus(\mathbb{R}+\epsilon\mathbb{H})\to Q_{\mathbb{DH}}\setminus\mathbb{R}$ is a trivial fiber bundle with fiber $\epsilon\mathbb{R}_\mathbb{C}$, whose trivialization is
\[\mathbb{DH}\setminus(\mathbb{R}+\epsilon\mathbb{H})\to(Q_{\mathbb{DH}}\setminus\mathbb{R})\times\epsilon\mathbb{R}_\mathbb{C}\,,\quad x\mapsto(\pi_Q(x),\pi_{\epsilon\mathbb{H}_\mathbb{C}}(\pi_2(\Phi^{-1}(x))))\,.\]

Finally, the unrestricted map $\pi_Q:\mathbb{DH}\to Q_{\mathbb{DH}}$ is quasi-open because, for every open subset $U$ of $\mathbb{DH}$, its image $\Phi(U)$ contains $\Phi(U\setminus(\mathbb{R}+\epsilon\mathbb{H}))$, which is an open subset of $Q_\mathbb{DH}\setminus\mathbb{R}$, whence of $Q_\mathbb{DH}$.
\end{proof}

We conclude this section with a construction that will be particularly useful later in the paper and that generalizes, in a sense, the construction of $\mathbb{S}_y^\mathbb{DH}$ from the case $y\in Q_\mathbb{DH}$ to all $y\in\mathbb{DH}$.

\begin{defn}
    We call two elements $y,y'$ of $\mathbb{DH}$ \emph{equivalent} and write $y\sim y'$ if $t(y)=t(y')$ and $n(y)=n(y')$. The symbol $[y]$ will denote the equivalence class of $y$ in $\mathbb{DH}$.
\end{defn}

\begin{rmk}\label{rmk:classes}
    If $x\in\mathbb{R}+\epsilon\mathbb{H}$, then $[x]=[x_1+\epsilon\operatorname{Re}(x_2)]$. If we decompose $x\in\mathbb{DH}\setminus(\mathbb{R}+\epsilon\mathbb{H})$ as $x=x_1+\epsilon x_2=x_1+\epsilon x_2^\perp+\epsilon x_2^\parallel$, then $[x]=[x_1+\epsilon x_2^\parallel]$. In particular, for $x\in Q_\mathbb{DH}$ we get $[x]=[x_1]$. The second statement is true because $x\sim x_1+\epsilon x_2^\parallel$, as a consequence of the following chains of equalities:
    \begin{align*}
        t(x)&=t(x_1)+\epsilon t(x_2)=t(x_1)+\epsilon t(x_2^\parallel)=t(x_1+\epsilon x_2^\parallel)\,,\\
        n(x)&=n(x_1)+\epsilon t(x_1x_2^c)=n(x_1)+2\epsilon\langle x_1,x_2\rangle=n(x_1)+2\epsilon\langle x_1,x_2^\parallel\rangle\\
        &=n(x_1)+\epsilon\,t\left(x_1(x_2^\parallel)^c\right)=n(x_1+\epsilon x_2^\parallel)\,.
    \end{align*}
    Similar computations prove the first statement.
\end{rmk}

Remark~\ref{rmk:classes} motivates the following definition, which will soon be useful.

\begin{defn}
    For every $x\in\mathbb{DH}\setminus(\mathbb{R}+\epsilon\mathbb{H})$, 
    we decompose $x\in\mathbb{DH}$ as $x=x_1+\epsilon x_2=x_1+\epsilon x_2^\perp+\epsilon x_2^\parallel$ and set
    \[T_x\coloneqq T_{x_1}+\epsilon x_2^\parallel\,.\]
\end{defn}

For all $a,b\in\mathbb{DR},J\in\mathbb{S}_\mathbb{DH}$, with $b\not\in\epsilon\mathbb{R}$ we point out that
\[T_{a+Jb}=a_1+J_1b_1+\epsilon\mathbb{C}_{J_1}^\perp+\epsilon(a_2+J_1b_2)=a+J_1b+\epsilon\mathbb{C}_{J_1}^\perp=a+J_1b+\epsilon b_1\mathbb{C}_{J_1}^\perp=a+bT_{J_1}.\]
In other words, $T_{a+Jb}$ is the set of points of the form $a+Ib$ with $I_1=J_1$. In particular, $T_{a+Jb}=T_{a+J_1b}$. We are now in a position to compute equivalence classes.

\begin{prop}\label{prop:classes}
    Let $a,b\in\mathbb{DR},I\in\mathbb{S}_\mathbb{DH}$ and set $y\coloneqq a+Ib$. We have $t(a+Ib)=2a$ and $n(a+Ib)=a^2+b^2$. If $b\in\epsilon\mathbb{R}$, then $[a+Ib]=a+\epsilon\operatorname{Im}(\mathbb{H})\supset a+b\mathbb{S}_\mathbb{H}=a+b\mathbb{S}_\mathbb{DH}$. If $b\not\in\epsilon\mathbb{R}$, then
    \[[a+Ib]=a+b\mathbb{S}_\mathbb{DH}=\bigcup_{J_1\in\mathbb{S}_\mathbb{H}}T_{a+J_1b}\,.\]
    In particular, $[y]=[y^c]$. Moreover, $\pi_Q([y])=[y_1]=\mathbb{S}_{y_1}^\mathbb{DH}=a_1+b_1\mathbb{S}_\mathbb{DH}$ and $\pi_\mathbb{H}([y])=\mathbb{S}_{y_1}^\mathbb{H}=a_1+b_1\mathbb{S}_\mathbb{H}$.
\end{prop}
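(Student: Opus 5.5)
The first two identities follow from the algebra of Proposition~\ref{prop:decompositionintoDCs}. Since $\phi_I$ restricted to $\mathbb{DR}_\mathbb{C}$ is a $\ast$-isomorphism onto $\mathbb{DC}_I$, and $a,b$ are central with $a^c=a$, $b^c=b$, we get $(a+Ib)^c=a-Ib$. Hence
\[t(a+Ib)=2a,\qquad n(a+Ib)=(a+Ib)(a-Ib)=a^2-I^2b^2=a^2+b^2.\]
So $y'\sim y$ if and only if $t(y')=2a$ and $n(y')=a^2+b^2$. The equality $[y]=[y^c]$ is then immediate, because $t(y^c)=t(y)$ and $n(y^c)=n(y)$ (or because $y^c=a+(-I)b$).

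For the description of $[y]$, I will use the surjectivity of $\Phi$ from Proposition~\ref{prop:decompositionintoDCs}: every $y'\in\mathbb{DH}$ can be written as $y'=a'+Jb'$ with $a',b'\in\mathbb{DR}$ and $J\in\mathbb{S}_\mathbb{DH}$. Then $y'\sim y$ means $a'=a$ and $b'^2=b^2$ in $\mathbb{DR}$. I then solve $b'^2=b^2$ in the dual numbers, writing $b=b_1+\epsilon b_2$ and $b'=b'_1+\epsilon b'_2$. This gives $b_1'^2=b_1^2$ and $b'_1b'_2=b_1b_2$.
\begin{itemize}
\item If $b_1\neq0$, the solutions are exactly $b'=\pm b$. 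Hence $[y]=\{a+Jb:J\in\mathbb{S}_\mathbb{DH}\}=a+b\mathbb{S}_\mathbb{DH}$, where the sign is absorbed because $\mathbb{S}_\mathbb{DH}=-\mathbb{S}_\mathbb{DH}$.
\item If $b_1=0$, the equation gives $b'_1=0$ with $b'_2$ arbitrary. Then $y'=a+J\epsilon b'_2=a+\epsilon J_1b'_2$ ranges over $a+\epsilon\operatorname{Im}(\mathbb{H})$, since $J_1$ runs over $\mathbb{S}_\mathbb{H}$ and $b'_2$ over $\mathbb{R}$.
\end{itemize}
In the second case I also check the inclusion directly: $a+b\mathbb{S}_\mathbb{DH}=a+\epsilon b_2\mathbb{S}_\mathbb{H}=a+b\mathbb{S}_\mathbb{H}$, using $\epsilon J=\epsilon J_1$, and this set is contained in $a+\epsilon\operatorname{Im}(\mathbb{H})$.

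To get the union over $J_1\in\mathbb{S}_\mathbb{H}$ when $b\notin\epsilon\mathbb{R}$, I use the observation just before the proposition: $T_{a+Jb}$ is the set of points $a+I'b$ with $I'_1=J_1$. Since $\mathbb{S}_\mathbb{DH}=\bigcup_{J_1\in\mathbb{S}_\mathbb{H}}T_{J_1}$, partitioning $\mathbb{S}_\mathbb{DH}$ according to the primal part $J_1$ gives $a+b\mathbb{S}_\mathbb{DH}=\bigcup_{J_1}T_{a+J_1b}$.

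The projections come from the definitions. By definition, $\pi_Q(a+Jb)=a_1+Jb_1$, so $\pi_Q([y])=a_1+b_1\mathbb{S}_\mathbb{DH}$ whenever $b_1\neq0$. This set equals $\mathbb{S}^\mathbb{DH}_{y_1}$ because $y_1=a_1+I_1b_1$, and it equals $[y_1]$ by the case $b\notin\epsilon\mathbb{R}$ already proved, applied to the real $b_1$. Applying $\pi_\mathbb{H}$, which sends $J$ to $J_1$ and maps $\mathbb{S}_\mathbb{DH}$ onto $\mathbb{S}_\mathbb{H}$, gives $\pi_\mathbb{H}([y])=a_1+b_1\mathbb{S}_\mathbb{H}=\mathbb{S}^\mathbb{H}_{y_1}$.

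When $b_1=0$, Remark~\ref{rmk:Qprojection} sends every point of $a+\epsilon\operatorname{Im}(\mathbb{H})$ to $a_1$ under $\pi_Q$. This agrees with $[y_1]=\{a_1\}$, since $y_1=a_1$ is real; likewise $\pi_\mathbb{H}([y])=\{a_1\}$. The step needing the most care is solving $b'^2=b^2$ in $\mathbb{DR}$, where the case split on $b_1$ determines which of the two descriptions of $[y]$ holds.
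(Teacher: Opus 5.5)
Most of your argument is correct and follows the paper's route. The paper also gets $t(a+Ib)=2a$ and $n(a+Ib)=a^2+b^2$ from the fact that $\phi_I:\mathbb{DR}_\mathbb{C}\to\mathbb{DC}_I$ is a $\ast$-isomorphism. It obtains the reverse inclusion from the decomposition of Proposition~\ref{prop:decompositionintoDCs}; your explicit solution of $b'^2=b^2$ in $\mathbb{DR}$ just spells out what the paper compresses into ``such an $x$ takes the form $a+Jb$''. It writes $a+b\mathbb{S}_\mathbb{DH}=\bigcup_{J_1}(a+bT_{J_1})=\bigcup_{J_1}T_{a+J_1b}$, and it handles the projections via $\pi_Q(a+Jb)=a_1+Jb_1$ and $\pi_\mathbb{H}\circ\pi_Q=\pi_\mathbb{H}$.

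There is, however, a false step in your last paragraph. For real $y_1=a_1$ you assert $[y_1]=\{a_1\}$, but that set is $\mathbb{S}^{\mathbb{DH}}_{y_1}$, not the equivalence class. Your own case $b\in\epsilon\mathbb{R}$, applied with $a=a_1$ and $b=0$, gives $[a_1]=a_1+\epsilon\operatorname{Im}(\mathbb{H})$; the paper states this explicitly right after defining symmetric subsets. So when $b_1=0$ you do have $\pi_Q([y])=\{a_1\}=\mathbb{S}^{\mathbb{DH}}_{y_1}=a_1+b_1\mathbb{S}_\mathbb{DH}$ and $\pi_\mathbb{H}([y])=\{a_1\}=\mathbb{S}^{\mathbb{H}}_{y_1}$. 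But $[y_1]$ strictly contains $\pi_Q([y])$, so the link $\pi_Q([y])=[y_1]$ in the statement is false in this degenerate case: it cannot be proved, only restricted. The paper's proof computes $\pi_Q([y])$ as $\pi_Q(a+b\mathbb{S}_\mathbb{DH})$, so it implicitly works under $b\notin\epsilon\mathbb{R}$. You should do the same, and for $b_1=0$ claim only the remaining equalities $\pi_Q([y])=\mathbb{S}^{\mathbb{DH}}_{y_1}=a_1+b_1\mathbb{S}_\mathbb{DH}$ and $\pi_\mathbb{H}([y])=\mathbb{S}^{\mathbb{H}}_{y_1}=a_1+b_1\mathbb{S}_\mathbb{H}$.
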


\begin{proof}
    Since $\phi_I:\mathbb{DR}_\mathbb{C}\to\mathbb{DC}_I$ is a $\ast$-isomorphism, a direct computation shows that
    \begin{align*}
        t(a+Ib)&=t(\phi_I(a+e_1b))=\phi_I(a+e_1b)+\phi_I(a+e_1b)^c\\
        &=\phi_I\left(a+e_1b+\overline{a+e_1b}\right)=\phi_I(2a)=2a\,,\\
        n(a+Ib)&=n(\phi_I(a+e_1b))=\phi_I(a+e_1b)\,\phi_I(a+e_1b)^c\\
        &=\phi_I\left((a+e_1b)(\overline{a+e_1b})\right)=\phi_I(a^2+b^2)=a^2+b^2\,.
    \end{align*}
    It follows at once that $[a+Ib]\supseteq a+b\mathbb{S}_\mathbb{DH}$. Conversely, $x\in[a+Ib]$ implies $t(x)=2a$ and $n(x)=a^2+b^2$. If $b\notin\epsilon\mathbb{R}$, then such an $x$ takes the form $a+Jb$ for some $J\in\mathbb{S}_\mathbb{DH}$, whence $x\in a+b\mathbb{S}_\mathbb{DH}$. If, instead, $b\in\epsilon\mathbb{R}$ (whence $b^2=0$), then such an $x$ takes the form $a+\epsilon c$ for any $c\in\operatorname{Im}(\mathbb{H})$.
    Furthermore, when $b\notin\epsilon\mathbb{R}$,
    \begin{align*}
        a+b\mathbb{S}_\mathbb{DH}&=a+b\bigcup_{J_1\in\mathbb{S}_\mathbb{H}}T_{J_1}=\bigcup_{J_1\in\mathbb{S}_\mathbb{H}}(a+bT_{J_1})=\bigcup_{J_1\in\mathbb{S}_\mathbb{H}}T_{a+J_1b}\,.
    \end{align*}
    We remark that $\pi_Q([y])=\pi_Q(a+b\mathbb{S}_\mathbb{DH})=a_1+b_1\mathbb{S}_\mathbb{DH}=\mathbb{S}_{y_1}^\mathbb{DH}=[y_1]$. The last equality in the statement can now be derived using the fact that $\pi_\mathbb{H}\circ\pi_Q=\pi_\mathbb{H}$.
\end{proof}

We make one more remark about equivalence classes.

\begin{rmk}
    Pick $a=a_1+e_1a_2,b=b_1+e_1b_2\in\mathbb{DR}$ and $I\in\mathbb{S}_\mathbb{DH}$. If $b_1>0$, then $\Phi^{-1}([a+Ib])=\Phi^{-1}(a+b\mathbb{S}_\mathbb{DH})=\mathbb{S}_\mathbb{DH}\times\{a\pm e_1b\}$. If $b_1=0$, then $\Phi^{-1}(a+b\mathbb{S}_\mathbb{DH})=\mathbb{S}_\mathbb{DH}\times\{a\pm e_1b\}$ but $\Phi^{-1}([a+Ib])=\mathbb{S}_\mathbb{DH}\times(a+e_1\epsilon\mathbb{R})$.
\end{rmk}

To conclude this section, we define and study a few symmetry notions in $\mathbb{DH}$.

\begin{defn}
    A \emph{symmetric} subset $T$ of $\mathbb{DH}$ is a $T\subseteq\mathbb{DH}$ such that, for every $x\in T$, the whole equivalence class $[x]$ is contained in $T$; i.e., such that $T=\bigcup_{x\in T}[x]$. The \emph{symmetric completion} of $T\subset\mathbb{DH}$ is the smallest symmetric subset containing $T$.
\end{defn}

In the special case when $T\subseteq Q_\mathbb{DH}\setminus\mathbb{R}$, the symmetric completion of $T$ is the smallest circular subset of $Q_\mathbb{DH}$ containing $T$. In particular, if $y\in Q_\mathbb{DH}\setminus\mathbb{R}$, then the symmetric completion of $\{y\}$ is $[y]=\mathbb{S}_y^\mathbb{DH}$. In contrast, if $y=y_1\in\mathbb{R}$, then the symmetric completion of $\{y_1\}$ is $[y_1]=y_1+\epsilon\operatorname{Im}(\mathbb{H})$.

\begin{defn}
    A symmetric subset $T$ of $\mathbb{DH}$ is termed \emph{maximally symmetric} if, for every $x=x_1+\epsilon x_2\in T$, the affine $4$-space $H_{x_1}=x_1+\epsilon\mathbb{H}$ is contained in $T$; in other words, $T=\bigcup_{x\in T}H_{x_1}$.
    The \emph{maximally symmetric completion} $\check{T}$ of $T\subseteq\mathbb{DH}$ is the smallest maximally symmetric subset containing $T$. The symbol $\check{\mathbb{S}}$ will denote the maximally symmetric completion of $\mathbb{S}_\mathbb{DH}$ or, equivalently, of $\{i\}$. For any $y\in\mathbb{DH}$, the symbol $\check{\mathbb{S}}_y$ will denote the maximally symmetric completion of $\{y\}$. 
\end{defn}

\begin{prop}
    For all $T\subseteq\mathbb{DH}$, then
    \[\check{T}\supseteq T+\epsilon\mathbb{H}=\pi_Q(T)+\epsilon\mathbb{H}=\pi_\mathbb{H}(T)+\epsilon\mathbb{H}.\]
    Moreover, each of the following conditions implies the subsequent one:
    \begin{enumerate}
        \item $T$ is symmetric;
        \item $T\setminus(\mathbb{R}+\epsilon\mathbb{H})$ is symmetric;
        \item $\pi_Q(T)$ is a circular subset of $Q_\mathbb{DH}$;
        \item $\pi_\mathbb{H}(T)$ is a circular subset of $\mathbb{H}$;
        \item $T+\epsilon\mathbb{H}$ is symmetric;
        \item $T+\epsilon\mathbb{H}$ is maximally symmetric;
        \item $\check{T}=T+\epsilon\mathbb{H}$.
    \end{enumerate}
    Conditions {\it 4}, {\it 5}, {\it 6}, {\it 7} are, in fact, equivalent.
\end{prop}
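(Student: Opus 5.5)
First I will establish the set equalities. By definition of maximally symmetric, $\check T$ contains $H_{x_1}=x_1+\epsilon\mathbb{H}$ for each $x=x_1+\epsilon x_2\in T$, so $\check T\supseteq \bigcup_{x\in T}H_{x_1}=\pi_\mathbb{H}(T)+\epsilon\mathbb{H}=T+\epsilon\mathbb{H}$. By Remark~\ref{rmk:Qprojection} we have $\pi_\mathbb{H}\circ\pi_Q=\pi_\mathbb{H}$, so $\pi_Q(T)+\epsilon\mathbb{H}=\pi_\mathbb{H}(\pi_Q(T))+\epsilon\mathbb{H}=\pi_\mathbb{H}(T)+\epsilon\mathbb{H}$.

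Next I will prove the chain of implications one step at a time.

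For 1$\Rightarrow$2: the complement $\mathbb{R}+\epsilon\mathbb{H}$ is itself symmetric. Indeed, $t(y)$ and $n(y)$ determine $\operatorname{Re}(y_1)$ and $|\operatorname{Im}(y_1)|$, and $y\in\mathbb{R}+\epsilon\mathbb{H}$ iff $\operatorname{Im}(y_1)=0$. So $T\setminus(\mathbb{R}+\epsilon\mathbb{H})$ is a union of full classes.

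For 2$\Rightarrow$3: by Proposition~\ref{prop:classes}, each class $[y]$ with $y_1\notin\mathbb{R}$ is $a+b\mathbb{S}_\mathbb{DH}$, and $\pi_Q([y])=a_1+b_1\mathbb{S}_\mathbb{DH}=\mathbb{S}^\mathbb{DH}_{y_1}$. Hence $\pi_Q(T\setminus(\mathbb{R}+\epsilon\mathbb{H}))$ is a union of spheres $\alpha+\beta\mathbb{S}_\mathbb{DH}$. Moreover $\pi_Q(T\cap(\mathbb{R}+\epsilon\mathbb{H}))\subseteq\mathbb{R}$, and real points are trivially circular. So $\pi_Q(T)$ is a union of sets $\mathbb{S}_x^\mathbb{DH}$, i.e.\ circular in the sense of being invariant under the spheres. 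If openness is required by the word ``circular'', I will read it as $\pi_Q(T)=\Omega_D$ for some conjugation-invariant $D$, and take $D=\{\alpha\pm e_1\beta\}$ over the spheres involved.

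For 3$\Rightarrow$4: apply $\pi_\mathbb{H}$. By \eqref{eq:domainprojection}, $\pi_\mathbb{H}(\Omega_{D,\mathbb{DH}})=\Omega_{D,\mathbb{H}}$.

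For 4$\Rightarrow$5: if $\pi_\mathbb{H}(T)=\Omega_{D,\mathbb{H}}$, then $T+\epsilon\mathbb{H}=\Omega_{D,\mathbb{H}}+\epsilon\mathbb{H}$. Proposition~\ref{prop:classes} gives $\pi_\mathbb{H}([y])=\mathbb{S}^\mathbb{H}_{y_1}$. So whenever $y_1\in\Omega_{D,\mathbb{H}}$, every element of $[y]$ has first component in $\mathbb{S}^\mathbb{H}_{y_1}\subseteq\Omega_{D,\mathbb{H}}$, hence $[y]\subseteq T+\epsilon\mathbb{H}$.

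For 5$\Rightarrow$6: $T+\epsilon\mathbb{H}$ is automatically a union of $H_{x_1}$'s, so symmetric plus that property is maximally symmetric.

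For 6$\Rightarrow$7: $T+\epsilon\mathbb{H}$ is then a maximally symmetric set containing $T$, so $\check T\subseteq T+\epsilon\mathbb{H}$. Combined with the first inclusion this gives equality.

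For the equivalence of 4--7 I will close the loop with 7$\Rightarrow$4. Since $\check T$ is symmetric, Proposition~\ref{prop:classes} gives that $\pi_\mathbb{H}(\check T)$ is a union of spheres $\mathbb{S}^\mathbb{H}_{y_1}$. Also $\pi_\mathbb{H}(\check T)=\pi_\mathbb{H}(T+\epsilon\mathbb{H})=\pi_\mathbb{H}(T)$. Hence $\pi_\mathbb{H}(T)$ is circular.

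The main obstacle is the meaning of ``circular''. The paper defines circular sets as circularizations of \emph{open} invariant $D$, while $T$ is arbitrary. I expect the intended meaning is sphere-invariance, i.e.\ being of the form $\Omega_D$ for some invariant $D$. With openness imposed, the implications 2$\Rightarrow$3 and 7$\Rightarrow$4 would fail for non-open $T$. I will state this interpretation explicitly. Apart from that, I will check the real-axis cases, namely classes $[x]=x_1+\epsilon\operatorname{Im}\mathbb{H}$, separately in 4$\Rightarrow$5, using $\pi_\mathbb{H}([x])=\{x_1\}$.
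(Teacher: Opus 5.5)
Your proof is correct and follows the paper's argument step by step. Your direct 7$\Rightarrow$4 is the paper's 7$\Rightarrow$6$\Rightarrow$5$\Rightarrow$4 collapsed: the paper's 5$\Rightarrow$4 uses exactly $\pi_\mathbb{H}([y])=\mathbb{S}^\mathbb{H}_{y_1}$ together with $\pi_\mathbb{H}(T+\epsilon\mathbb{H})=\pi_\mathbb{H}(T)$. Your reading of ``circular'' as ``union of spheres $\mathbb{S}_x$'', without openness, is the one the paper's proof tacitly uses, so flagging that ambiguity is appropriate.
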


\begin{proof}
    The first statement follows directly from the previous definition.
    
    We now turn to the second statement. If $T$ is symmetric, then $T\setminus(\mathbb{R}+\epsilon\mathbb{H})$ is automatically symmetric because $x\not\in\mathbb{R}+\epsilon\mathbb{H}$ implies $[x]\cap(\mathbb{R}+\epsilon\mathbb{H})=\emptyset$. Assuming $T\setminus(\mathbb{R}+\epsilon\mathbb{H})$ to be symmetric is the same as assuming $T\setminus(\mathbb{R}+\epsilon\mathbb{H})=\bigcup_{x\in T\setminus(\mathbb{R}+\epsilon\mathbb{H})}[x]$; taking into account that $\pi_Q([x])=[x_1]=\mathbb{S}_{x_1}^\mathbb{DH}$, we derive that  $\pi_Q(T)\setminus\mathbb{R}=\pi_Q(T\setminus(\mathbb{R}+\epsilon\mathbb{H}))=\bigcup_{x\in T\setminus(\mathbb{R}+\epsilon\mathbb{H})}\mathbb{S}_{x_1}^\mathbb{DH}$; it follows that $\pi_Q(T)\setminus\mathbb{R}$ (whence $\pi_Q(T)$) is a circular subset of $Q_\mathbb{DH}$. Assuming $\pi_Q(T)$ to be circular is the same as assuming $\pi_Q(T)=\bigcup_{x\in \pi_Q(T)}\mathbb{S}_x^\mathbb{DH}=\bigcup_{x\in \pi_Q(T)}\mathbb{S}_{x_1}^\mathbb{DH}$; taking into account Remark~\ref{rmk:Qprojection}, we get $\pi_\mathbb{H}(T)=\pi_\mathbb{H}(\pi_Q(T))=\bigcup_{x\in \pi_Q(T)}\pi_\mathbb{H}(\mathbb{S}_{x_1}^\mathbb{DH})=\bigcup_{x\in \pi_Q(T)}\mathbb{S}_{x_1}^\mathbb{H}$, whence $\pi_\mathbb{H}(T)$ is a circular subset of $\mathbb{H}$. If $\pi_\mathbb{H}(T)$ is a circular subset of $\mathbb{H}$, then $T+\epsilon\mathbb{H}=\pi_\mathbb{H}(T)+\epsilon\mathbb{H}$ is symmetric because, for every $x\in \pi_\mathbb{H}(T)+\epsilon\mathbb{H}$, we have $x_1\in\pi_\mathbb{H}(T)$, whence $\mathbb{S}_{x_1}^\mathbb{H}\subseteq\pi_\mathbb{H}(T)$ and $\pi_\mathbb{H}(T)+\epsilon\mathbb{H}\supseteq\mathbb{S}_{x_1}^\mathbb{H}+\epsilon\mathbb{H}\supseteq[x]$. If $T+\epsilon\mathbb{H}$ is symmetric, then it is automatically maximally symmetric. If $T+\epsilon\mathbb{H}$ is maximally symmetric, taking into account that $T\subseteq T+\epsilon\mathbb{H}\subseteq\check{T}$, we conclude that $T+\epsilon\mathbb{H}$ is indeed equal to $\check{T}$, as desired.

    Let us now prove the third statement. If $T+\epsilon\mathbb{H}$ is the maximally symmetric completion of a set, then in particular it is maximally symmetric. If $T+\epsilon\mathbb{H}$ is maximally symmetric, then a fortiori it is symmetric. Finally, if $T+\epsilon\mathbb{H}$ is symmetric, then $\pi_\mathbb{H}(T)$ is a circular subset of $\mathbb{H}$: indeed, for every $x_1\in\pi_\mathbb{H}(T)=\pi_\mathbb{H}(T+\epsilon\mathbb{H})$ there exists $y\in T+\epsilon\mathbb{H}$ with $y_1=x_1$ and the inclusion $[y]\subseteq T+\epsilon\mathbb{H}$ implies that $\mathbb{S}_{x_1}^\mathbb{H}=\pi_\mathbb{H}([y])\subseteq\pi_\mathbb{H}(T+\epsilon\mathbb{H})=\pi_\mathbb{H}(T)$.
\end{proof}

As a consequence of the last proposition, we have $\widecheck{Q}_\mathbb{DH}=\widecheck{\mathbb{H}}=\mathbb{DH}$ and we can make the next remark.

\begin{rmk}
    If $\Omega_D$ denotes $\Omega_{D,\mathbb{DH}}$, then
    \begin{align}\label{eq:scheck}
        &\check{\mathbb{S}}_y=[y]+\epsilon\mathbb{H}=[y_1]+\epsilon\mathbb{H}=\mathbb{S}_{y_1}^\mathbb{DH}+\epsilon\mathbb{H}=\mathbb{S}_{y_1}^\mathbb{H}+\epsilon\mathbb{H}=\Phi(\mathbb{S}_\mathbb{DH}\times(\{a_1+e_1b_1\}+\epsilon\mathbb{R}_\mathbb{C}))\,,\\
        &\widecheck{\Omega}_D=\Omega_D+\epsilon\mathbb{H}=\Omega_{D,\mathbb{H}}+\epsilon\mathbb{H}=\Phi(\mathbb{S}_\mathbb{DH}\times(D+\epsilon\mathbb{R}_\mathbb{C}))\notag
    \end{align}
     for all $y=a+Ib\in\mathbb{DH}$ and all $D\subseteq\mathbb{R}_\mathbb{C}$. The last equality in either chain follows from the fact that $\Phi(\mathbb{S}_\mathbb{DH}\times(D+\epsilon\mathbb{R}_\mathbb{C}))=\{a+Ib:a,b\in\mathbb{DR},I\in\mathbb{S}_\mathbb{DH},a_1+e_1b_1\in D\}=\{a_1+I_1b_1+\epsilon(a_2+I_1b_2+I_2b_1):a_1,b_1,a_2,b_2\in\mathbb{R},I_1\in\mathbb{S}_\mathbb{H},I_2\in\operatorname{Im}(\mathbb{H}),I_1\perp I_2,a_1+I_1b_1\in\Omega_{D,\mathbb{H}}\}=\Omega_{D,\mathbb{H}}+\epsilon\mathbb{H}$.
\end{rmk}

In particular,
    \begin{align*}
        \pi_Q(\widecheck{\Omega}_D)&=\widecheck{\Omega}_D\cap Q_\mathbb{DH}=\Omega_D=\Phi(\mathbb{S}_\mathbb{DH}\times D)\\
        \pi_\mathbb{H}(\widecheck{\Omega}_D)&=\widecheck{\Omega}_D\cap\mathbb{H}=\Omega_{D,\mathbb{H}}=\Phi(\mathbb{S}_\mathbb{H}\times D).
    \end{align*}
We also point out that $\widecheck{\Omega}_{D\setminus\mathbb{R}}=\widecheck{\Omega}_D\setminus(\mathbb{R}+\epsilon\mathbb{H})$. We conclude this section with a few other decompositions of $\check{\mathbb{S}}_y$, which will be useful later.

\begin{lem}\label{lem:decompositionofscheck}
    Let $y\in\mathbb{DH}$. Then
    \[\check{\mathbb{S}}_y=\bigcup_{x_1=y_1}[x]=\bigcup_{x_1\in\mathbb{S}^\mathbb{H}_{y_1}}H_{x_1}\,.\]
    If, moreover, $y\not\in\mathbb{R}+\epsilon\mathbb{H}$, then $H_{y_1}=\bigcup_{x_1=y_1}T_x$ and $\check{\mathbb{S}}_y=\bigcup_{x_1\in\mathbb{S}^\mathbb{H}_{y_1}}T_x$.
\end{lem}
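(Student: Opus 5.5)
The plan is to start from the chain of equalities~\eqref{eq:scheck}, which gives $\check{\mathbb{S}}_y=\mathbb{S}_{y_1}^\mathbb{H}+\epsilon\mathbb{H}$. Since $H_{x_1}=x_1+\epsilon\mathbb{H}$ by definition, this already equals $\bigcup_{x_1\in\mathbb{S}^\mathbb{H}_{y_1}}H_{x_1}$, which is the second equality.

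For the first equality, $\bigcup_{x_1=y_1}[x]$ means the union of the classes $[x]$ over all $x\in H_{y_1}$. I will prove two inclusions.

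\begin{enumerate}
\item For $\subseteq$: the union contains $H_{y_1}$, since each $x\in[x]$. Given $x'_1\in\mathbb{S}^\mathbb{H}_{y_1}$ and any $x'=x'_1+\epsilon x'_2$, I need some $x\in H_{y_1}$ with $x'\sim x$.
\begin{itemize}
\item If $y_1\notin\mathbb{R}$, write $x'=a+Jb$ via Proposition~\ref{prop:decompositionintoDCs}, with $a_1+J_1b_1=x'_1$. Since $x'_1$ and $y_1$ lie on the same sphere, $y_1=a_1+K_1b_1$ for some $K_1\in\mathbb{S}_\mathbb{H}$. Then $x\coloneqq a+K_1b$ lies in $H_{y_1}$, and Proposition~\ref{prop:classes} gives $t(x)=2a=t(x')$ and $n(x)=a^2+b^2=n(x')$.
\item If $y_1\in\mathbb{R}$, then $x'_1=y_1$ and $x'\in H_{y_1}$ trivially.
\end{itemize}
\item For $\supseteq$: for every $x\in H_{y_1}$, Proposition~\ref{prop:classes} gives $\pi_\mathbb{H}([x])=\mathbb{S}^\mathbb{H}_{x_1}=\mathbb{S}^\mathbb{H}_{y_1}$. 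Hence $[x]\subseteq\mathbb{S}^\mathbb{H}_{y_1}+\epsilon\mathbb{H}$.
\end{enumerate}

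For the last statement, assume $y\notin\mathbb{R}+\epsilon\mathbb{H}$, so $y_1\notin\mathbb{R}$. Then every $x\in H_{y_1}$ lies outside $\mathbb{R}+\epsilon\mathbb{H}$, and $T_x=T_{y_1}+\epsilon x_2^\parallel$ is defined. Because $x_2^\perp$ ranges over the whole of $\mathbb{C}_{J_1}^\perp$ in $T_{y_1}$, we get $x\in T_x\subseteq H_{y_1}$. Taking the union over $x\in H_{y_1}$ gives $H_{y_1}=\bigcup_{x_1=y_1}T_x$. Substituting into the second equality, applied with each $x_1\in\mathbb{S}^\mathbb{H}_{y_1}$ in place of $y_1$ (every such $x_1$ is non-real), yields $\check{\mathbb{S}}_y=\bigcup_{x_1\in\mathbb{S}^\mathbb{H}_{y_1}}T_x$.

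The main obstacle is the $\subseteq$ direction of the first equality: choosing $x\in H_{y_1}$ equivalent to a prescribed $x'$. The rotation trick $J\mapsto K_1$ handles it, with the degenerate case $b\in\epsilon\mathbb{R}$ (equivalently $y_1\in\mathbb{R}$) treated separately as above.
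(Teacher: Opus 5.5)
Your proof is correct, but for the first equality $\check{\mathbb{S}}_y=\bigcup_{x_1=y_1}[x]$ you take a different route from the paper.

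You argue by double inclusion. For $\subseteq$, you attach to each $x'=a+Jb\in\check{\mathbb{S}}_y$ the explicit representative $a+K_1b\in H_{y_1}$ of its class, checking equivalence only through $t(a+Ib)=2a$ and $n(a+Ib)=a^2+b^2$. For $\supseteq$, you use $\pi_\mathbb{H}([x])=\mathbb{S}^\mathbb{H}_{x_1}$. This projection fact also holds for real $x_1$, and follows directly from comparing primal parts of $t$ and $n$.

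The paper splits into cases. For $y_1\in\mathbb{R}$, it writes $H_{y_1}$ explicitly as the union of the classes $a+\epsilon\operatorname{Im}(\mathbb{H})$ with $a_1=y_1$. For $y_1\notin\mathbb{R}$, it obtains the first equality as a by-product of the $T_x$ decomposition, in three steps:
\begin{enumerate}
\item It shows $H_{a_1+J_1b_1}=T_{a_1+J_1b_1}+\epsilon\mathbb{C}_{J_1}=\bigcup_{A_1=a_1,B_1=b_1}T_{A+J_1B}$.
\item It deduces $\check{\mathbb{S}}_y=\bigcup_{x_1\in\mathbb{S}^\mathbb{H}_{y_1}}T_x$.
\item It regroups these planes into classes via $[a+Ib]=a+b\mathbb{S}_\mathbb{DH}=\bigcup_{J_1\in\mathbb{S}_\mathbb{H}}T_{a+J_1b}$ from Proposition~\ref{prop:classes}.
\end{enumerate}

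Your argument is more elementary and essentially uniform in $y$. The substitution $J\mapsto K_1$ works just as well when $b_1=0$, so your case split is not even needed. It also makes the first equality independent of the $T_x$ statements.

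The paper's argument costs an extra regrouping step. In return, it shows that the planes $T_x$ give a common refinement of two decompositions of $\check{\mathbb{S}}_y$: into the affine spaces $H_{x_1}$, and into classes.

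Your proof of $H_{y_1}=\bigcup_{x_1=y_1}T_x$ via $x\in T_x\subseteq H_{y_1}$ is essentially the paper's.
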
    

\begin{proof}
    Using~\eqref{eq:scheck}, we compute
    \[\check{\mathbb{S}}_y=\mathbb{S}^\mathbb{H}_{y_1}+\epsilon\mathbb{H}=\bigcup_{x_1\in\mathbb{S}^\mathbb{H}_{y_1}}(x_1+\epsilon\mathbb{H})=\bigcup_{x_1\in\mathbb{S}^\mathbb{H}_{y_1}}H_{x_1}\,.\]
    If $y\in\mathbb{R}+\epsilon\mathbb{H}$, then $\mathbb{S}_{y_1}^\mathbb{H}=\{y_1\}$, whence
        \[\bigcup_{x_1\in\mathbb{S}^\mathbb{H}_{y_1}}H_{x_1}=H_{y_1}=y_1+\epsilon\mathbb{H}=y_1+\epsilon\mathbb{R}+\epsilon\operatorname{Im}(\mathbb{H})=\bigcup_{a\in\mathbb{DR},a_1=y_1}(a+\epsilon\operatorname{Im}(\mathbb{H}))=\bigcup_{x_1=y_1}[x]\,.\]
    Now let us assume, instead, $y\not\in\mathbb{R}+\epsilon\mathbb{H}$. For all $a_1,b_1\in\mathbb{R},J_1\in\mathbb{S}_\mathbb{H}$, we have
    \begin{align*}
        H_{a_1+J_1b_1}&=a_1+J_1b_1+\epsilon\mathbb{H}=T_{a_1+J_1b_1}+\epsilon\mathbb{C}_{J_1}=\bigcup_{A,B\in\mathbb{DR},A_1=a_1,B_1=b_1}T_{A+J_1B}\\
        &=\bigcup_{A,B\in\mathbb{DR},I\in\mathbb{S}_\mathbb{DH},A_1=a_1,B_1=b_1,I_1=J_1}T_{A+IB}=\bigcup_{x_1=a_1+J_1b_1}T_x\,,
    \end{align*}
    as desired. It follows at once that
    \[\check{\mathbb{S}}_y=\bigcup_{x_1\in\mathbb{S}^\mathbb{H}_{y_1}}H_{x_1}=\bigcup_{x_1\in\mathbb{S}^\mathbb{H}_{y_1}}T_x\,.\]
    To conclude the proof, it suffices to remark that
        \begin{align*}
        \bigcup_{x_1\in\mathbb{S}^\mathbb{H}_{y_1}}T_x&=\bigcup_{J\in\mathbb{S}_\mathbb{DH},a,b\in\mathbb{DR},a_1+J_1b_1\in\mathbb{S}^\mathbb{H}_{y_1}}T_{a+Jb}=\bigcup_{J_1\in\mathbb{S}_\mathbb{H},a,b\in\mathbb{DR},a_1+J_1b_1\in\mathbb{S}^\mathbb{H}_{y_1}}T_{a+J_1b}\\
        &=\bigcup_{a,b\in\mathbb{DR},y_1\in a_1+b_1\mathbb{S}_\mathbb{H}}(a+b\mathbb{S}_\mathbb{DH})=\bigcup_{x_1=y_1}[x]\,.
        \end{align*}
    For the last two equalities, we used Proposition~\ref{prop:classes}.
\end{proof}

%%%%%%%%%%%%%%%%%%%%%%%%%%%%%

\section{Extension of stem functions}\label{sec:dualstem}

Our aim is to construct a new function class over $\mathbb{DH}$, containing the class of functions $\mathbb{DH}\to\mathbb{DH}$ associated to dual quaternionic polynomials. As a first step, we appropriately extend stem functions from $\mathbb{R}_\mathbb{C}$ to $\mathbb{DR}_\mathbb{C}$. Throughout this section, we fix an open subset $D$ of $\mathbb{R}_\mathbb{C}$ that is invariant with respect to conjugation.

\begin{defn}\label{def:extendedstem}
    Set $Stem^{h,k}(D,\mathbb{DH}_\mathbb{C})\coloneqq Stem^h(D,\mathbb{H}_\mathbb{C})+\epsilon\, Stem^k(D,\mathbb{H}_\mathbb{C})$ for all $h,k\in\mathbb{N}\cup\{\infty,\omega\}$. Pick $G\in Stem^1(D,\mathbb{H}_\mathbb{C}),H\in Stem^0(D,\mathbb{H}_\mathbb{C})$ and consider the element $F\coloneqq G+\epsilon H$ of $Stem^{1,0}(D,\mathbb{DH}_\mathbb{C})$. The \emph{natural extension} of $F$ to $\check{D}\coloneqq D+\epsilon\mathbb{R}_\mathbb{C}$ is the continuous function $\check{F}:\check{D}\to\mathbb{DH}_\mathbb{C}$ defined by the formula
    \begin{align*}
        \check{F}(z_1+\epsilon z_2)&\coloneqq G(z_1)+\epsilon\left(H(z_1)+z_2\,\frac{\partial G}{\partial z}(z_1)+\bar z_2\,\frac{\partial G}{\partial \overline{z}}(z_1)\right)\\
        &=F(z_1)+\epsilon\left(z_2\,\frac{\partial G}{\partial z}(z_1)+\bar z_2\,\frac{\partial G}{\partial \overline{z}}(z_1)\right)\,,
    \end{align*}
    or, equivalently,
    \[\check{F}(a+e_1b)\coloneqq G(a_1+e_1b_1)+\epsilon\left(H(a_1+e_1b_1)+a_2\,\frac{\partial G}{\partial \alpha}(a_1+e_1b_1)+b_2\,\frac{\partial G}{\partial \beta}(a_1+e_1b_1)\right)\,.\]
    We call all such functions $\check{F}$ \emph{dual stem functions} on $\check{D}$ and denote their set by the symbol $DStem^0(\check{D},\mathbb{DH}_\mathbb{C})$. For $k\in\mathbb{N}^*\cup\{\infty,\omega\}$, we define the sets $DStem^k(\check{D},\mathbb{DH}_\mathbb{C})\coloneqq\{\check{F}:F\in Stem^{k+1,k}(D,\mathbb{DH}_\mathbb{C})\}\subset\mathcal{C}^k(\check{D},\mathbb{DH}_\mathbb{C})$ and $DStem^{pol}(\check{D},\mathbb{DH}_\mathbb{C})\coloneqq\{\check{F}:F\in Stem^{pol}(D,\mathbb{DH}_\mathbb{C})\}$.
\end{defn}

When $F\in Stem^1(D,\mathbb{DH}_\mathbb{C})=Stem^{1,1}(D,\mathbb{DH}_\mathbb{C})$, then the equality
    \[\check{F}(z_1+\epsilon z_2)=F(z_1)+\epsilon\left(z_2\,\frac{\partial F}{\partial z}(z_1)+\bar z_2\,\frac{\partial F}{\partial \overline{z}}(z_1)\right)\]
also holds true for all $z_1+\epsilon z_2\in \check{D}$.

For $z_1\in D$ fixed, the map $\epsilon\mathbb{R}_\mathbb{C}\to\mathbb{DH}_\mathbb{C},\ \epsilon z_2\mapsto\check{F}(z_1+\epsilon z_2)$ is a real affine map whose value at $0$ is $F(z_1)$ and whose differential at $0$ maps $\epsilon\mathbb{R}_\mathbb{C}$ into $\epsilon\mathbb{H}_\mathbb{C}$ in accordance with the nature of the real differential of $G$ at $z_1$. In particular, $\check{F}_{|_D}=F$. We are now ready for the next theorem.

\begin{thm}
    The class $DStem^0(\check{D},\mathbb{DH}_\mathbb{C})$ is both a right $\mathbb{DH}$-module and a real $\ast$-algebra, when endowed with the pointwise operations $(\check{F}+\check{G})(z)\coloneqq\check{F}(z)+\check{G}(z), (\check{F}h)(z)\coloneqq\check{F}(z)h,(\check{F}\check{G})(z)\coloneqq\check{F}(z)\check{G}(z),\check{F}^c(z)\coloneqq\check{F}(z)^c$. Natural extension $Stem^{1,0}(D,\mathbb{DH}_\mathbb{C})\to DStem^0(\check{D},\mathbb{DH}_\mathbb{C}),\ F\mapsto\check{F}$ is both a right $\mathbb{DH}$-module isomorphism and a $\ast$-algebra isomorphism, whose inverse map is restriction to $D$.
\end{thm}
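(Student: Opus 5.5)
The strategy is to transport the algebraic structure of $Stem^{1,0}(D,\mathbb{DH}_\mathbb{C})$ to $DStem^0(\check{D},\mathbb{DH}_\mathbb{C})$ through the natural extension map $E:F\mapsto\check{F}$. Then we check that the transported operations coincide with the pointwise ones. Concretely, we show three things: that $E$ is injective with left inverse given by restriction to $D$; that $E$ is right $\mathbb{DH}$-linear; and that $E$ commutes with products and with $^c$. Surjectivity is by definition, since $DStem^0$ is the image of $E$. Closure of $DStem^0$ under the pointwise operations then follows, because each pointwise operation applied to extensions is again an extension. First we note that $Stem^{1,0}(D,\mathbb{DH}_\mathbb{C})$ is itself a real $\ast$-subalgebra and right $\mathbb{DH}$-submodule of $Stem(D,\mathbb{DH}_\mathbb{C})$. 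Indeed, writing $F=G+\epsilon H$ and $F'=G'+\epsilon H'$, we get $FF'=GG'+\epsilon(GH'+HG')$ with $GG'\in Stem^1$ and $GH'+HG'\in Stem^0$. Also $F^c=G^c+\epsilon H^c$. For $h=h_1+\epsilon h_2\in\mathbb{DH}$ we have $Fh=Gh_1+\epsilon(Gh_2+Hh_1)$.

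Injectivity is immediate from $\check{F}_{|_D}=F$, which was already observed after Definition~\ref{def:extendedstem}. Additivity of $E$ is clear, since the Wirtinger operators are real linear.

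For the right $\mathbb{DH}$-module structure, we compute $\check{F}(z_1+\epsilon z_2)h$ and compare it with $\widecheck{Fh}(z_1+\epsilon z_2)$. The first component of $Fh$ is $Gh_1$, and $\frac{\partial (Gh_1)}{\partial z}=\frac{\partial G}{\partial z}h_1$ because the Wirtinger operators are right $A$-module morphisms. Multiplying $\check F$ on the right by $h$ kills $\epsilon\cdot\epsilon h_2$ terms, so both sides equal
\[Gh_1+\epsilon\Big(Gh_2+Hh_1+z_2\tfrac{\partial G}{\partial z}h_1+\bar z_2\tfrac{\partial G}{\partial\bar z}h_1\Big).\]
Here we use that $z_2\in\mathbb{R}_\mathbb{C}$ is central in $\mathbb{DH}_\mathbb{C}$, so its position relative to $h_1$ is irrelevant.

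The main step is multiplicativity. We expand $\check F\check F'$ using $\epsilon^2=0$. The primal part is $GG'$. The $\epsilon$-part is
\[GH'+HG'+G\Big(z_2\tfrac{\partial G'}{\partial z}+\bar z_2\tfrac{\partial G'}{\partial\bar z}\Big)+\Big(z_2\tfrac{\partial G}{\partial z}+\bar z_2\tfrac{\partial G}{\partial\bar z}\Big)G',\]
all evaluated at $z_1$. By the Leibniz rules $\frac{\partial(GG')}{\partial z}=\frac{\partial G}{\partial z}G'+G\frac{\partial G'}{\partial z}$ (and likewise for $\bar z$), together with the centrality of $z_2$ and $\bar z_2$ in $\mathbb{DH}_\mathbb{C}$, this equals the $\epsilon$-part of $\widecheck{FF'}$. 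Care is only needed in keeping the factors $G,G'$ in order; commuting $z_2$ past $G$ is legitimate because $z_2\in\mathbb{R}_\mathbb{C}$ commutes with everything.

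For conjugation, we use that $^c$ commutes with $\partial/\partial\alpha$ and $\partial/\partial\beta$. Writing $(a+e_1b)^c=a^c+e_1b^c$, it also commutes with multiplication by $e_1$, and hence with the Wirtinger operators. Moreover $(z_2 X)^c=z_2X^c$ for $z_2\in\mathbb{R}_\mathbb{C}$ and $(\epsilon Y)^c=\epsilon Y^c$. Together these give $\check F^c=\widecheck{F^c}$.

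Consequently $DStem^0$ is closed under the pointwise operations, and $E$ is a bijective homomorphism for both structures with inverse restriction to $D$. The ring and module axioms are inherited pointwise from $\mathbb{DH}_\mathbb{C}$, or equivalently transported from $Stem^{1,0}$. The only delicate point is the order bookkeeping in the Leibniz step, and this is harmless because $\mathbb{DH}_\mathbb{C}$ is associative.
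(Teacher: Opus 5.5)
Your proposal is correct and follows essentially the same route as the paper: injectivity and the inverse come from $\check{F}_{|_D}=F$, right $\mathbb{DH}$-linearity is read off the definition, and multiplicativity and compatibility with ${}^c$ follow from expanding with $\epsilon^2=0$. These last two steps also use the Leibniz rule for the Wirtinger operators and the centrality of $\epsilon$ and $\mathbb{R}_\mathbb{C}$ in $\mathbb{DH}_\mathbb{C}$, and you additionally spell out checks the paper leaves implicit (the explicit computation of $\widecheck{Fh}$ and the closure of $Stem^{1,0}(D,\mathbb{DH}_\mathbb{C})$ under the operations).
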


\begin{proof}
    Definition~\ref{def:extendedstem} implies at once that $DStem^0(\check{D},\mathbb{DH}_\mathbb{C})$ is a right $\mathbb{DH}$-module and that natural extension is a right $\mathbb{DH}$-module morphism. Since we already remarked that $\check{F}_{|_D}=F$ for all $F\in Stem^{1,0}(D,\mathbb{DH}_\mathbb{C})$, natural extension is a right $\mathbb{DH}$-module isomorphism whose inverse is restriction to $D$. We are left with proving that natural extension is a $\ast$-algebra morphism. Pick $R,G\in Stem^1(D,\mathbb{H}_\mathbb{C}),S,H\in Stem^0(D,\mathbb{H}_\mathbb{C})$. By applying Definition~\ref{def:extendedstem} to both $P\coloneqq R+\epsilon S$ and $F\coloneqq G+\epsilon H$, we get
    \begin{align*}
        \check{P}(z)\check{F}(z)&=R(z_1)G(z_1)+\epsilon\left(S(z_1)+z_2\,\frac{\partial R}{\partial z}(z_1)+\bar z_2\,\frac{\partial R}{\partial \overline{z}}(z_1)\right)G(z_1)\\
        &\quad+R(z_1)\,\epsilon\left(H(z_1)+z_2\,\frac{\partial G}{\partial z}(z_1)+\bar z_2\,\frac{\partial G}{\partial \overline{z}}(z_1)\right)\\
        &=P(z_1)F(z_1)+\epsilon z_2\left(\frac{\partial R}{\partial z}(z_1)\,G(z_1)+R(z_1)\,\frac{\partial G}{\partial z}(z_1))\right)\\
        &\quad+\epsilon \bar z_2\left(\frac{\partial R}{\partial \overline{z}}(z_1)\,G(z_1)+R(z_1)\,\frac{\partial G}{\partial \overline{z}}(z_1)\right)\\
        &=(PF)(z_1)+\epsilon\left(z_2\,\frac{\partial(RG)}{\partial z}(z_1)+\bar z_2\,\frac{\partial(RG)}{\partial \overline{z}}(z_1)\right)=\widecheck{PF}(z)
    \end{align*}
    and
    \begin{align*}
        \check{F}(z)^c&=G(z_1)^c+\epsilon\left(H(z_1)+z_2\,\frac{\partial G}{\partial z}(z_1)+\bar z_2\,\frac{\partial G}{\partial \overline{z}}(z_1)\right)^c\\
        &=F^c(z_1)+\epsilon\left(z_2\,\frac{\partial G^c}{\partial z}(z_1)+\bar z_2\,\frac{\partial G^c}{\partial \overline{z}}(z_1)\right)=\widecheck{F^c}(z)\,,
    \end{align*}
    as desired. For the second and fourth equalities, we took into account that $PF=RG+\epsilon(SG+RH)$.
\end{proof}

For $k\in\mathbb{N}\cup\{\infty,\omega\}$, by construction, the $\ast$-subalgebras $Stem^{k+1,k}(D,\mathbb{DH}_\mathbb{C}),Stem^{pol}(D,\mathbb{DH}_\mathbb{C})$ are isomorphically mapped by natural extension into $DStem^k(\check{D},\mathbb{DH}_\mathbb{C})$ and $DStem^{pol}(\check{D},\mathbb{DH}_\mathbb{C})$, respectively. 

\begin{ex}\label{ex:REIM}
    Consider the following polynomial stem functions $\mathbb{R}_\mathbb{C}\to\mathbb{R}_\mathbb{C}\subset\mathbb{DH}_\mathbb{C}$:
    \[ID(z_1)=z_1,\quad\overline{ID}(z_1)=\bar z_1,\quad RE(z_1)\coloneqq\frac{z_1+\bar z_1}{2},\quad IM(z_1)\coloneqq\frac{z_1-\bar z_1}{2}\,.\]
    For the last two, we also have the expressions $RE(a_1+e_1b_1)=a_1,IM(a_1+e_1b_1)=e_1b_1$, valid for all $a_1,b_1\in\mathbb{R}$. These stem functions induce relevant slice functions: we have $id_{Q_\mathbb{DH}}=\mathcal{I}(ID),\operatorname{Re}=\mathcal{I}(RE),\operatorname{Im}=\mathcal{I}(IM)\in\mathcal{S}^{pol}(Q_\mathbb{DH})$, while $\mathcal{I}(\overline{ID})$ is the map $Q_\mathbb{DH}\to Q_\mathbb{DH},\ x\mapsto x^c$. Since $\frac{\partial ID}{\partial z}\equiv1\equiv\frac{\partial\overline{ID}}{\partial \overline{z}},\frac{\partial ID}{\partial \overline{z}}\equiv0\equiv\frac{\partial\overline{ID}}{\partial z},$ and $\frac{\partial RE}{\partial z}=\frac{\partial RE}{\partial \overline{z}}=\frac{\partial IM}{\partial z}=-\frac{\partial IM}{\partial \overline{z}}\equiv\frac12$, by definition
    \begin{align*}
        \widecheck{ID}(z)&=z_1+\epsilon(z_2\,1+\bar z_2\,0)=z_1+\epsilon z_2=z\,,\\
        \widecheck{\overline{ID}}(z)&=\bar z_1+\epsilon(z_2\,0+\bar z_2\,1)=\bar z_1+\epsilon\bar z_2=\bar z\,,\\
        \widecheck{RE}(z)&=\widecheck{RE}(z_1+\epsilon z_2)=RE(z_1)+\epsilon\left(\frac{z_2}{2}+\frac{\bar z_2}{2}\right)=\frac{z_1+\bar z_1}{2}+\epsilon\frac{z_2+\bar z_2}{2}=\frac{z+\bar z}{2}\,,\\
        \widecheck{IM}(z)&=\widecheck{IM}(z_1+\epsilon z_2)=IM(z_1)+\epsilon\left(\frac{z_2}{2}-\frac{\bar z_2}{2}\right)=\frac{z_1-\bar z_1}{2}+\epsilon\frac{z_2-\bar z_2}{2}=\frac{z-\bar z}{2}
    \end{align*}
    for all $z\in\check{\mathbb{R}}_\mathbb{C}=\mathbb{DR}_\mathbb{C}$. For all $a,b\in\mathbb{DR}$, we get $\widecheck{RE}(a+e_1b)=a$ and $\widecheck{IM}(a+e_1b)=e_1b$. We point out that $\widecheck{IM}^2(a+e_1b)=-b^2$ is a dual stem function on $\mathbb{DR}_\mathbb{C}$ too, extending the polynomial stem function $IM^2(a_1+e_1b_1)=-b_1^2$. Moreover, if we set $D\coloneqq\mathbb{R}_\mathbb{C}\setminus\mathbb{R}$, then $\widecheck{IM}^{-1}(a+e_1b)=(e_1b)^{-1}=-e_1b^{-1}$ is a dual stem function on $\check{D}$, extending the element $IM^{-1}$ of $Stem^\omega(D,\mathbb{R}_\mathbb{C})$ (where $IM^{-1}(a_1+e_1b_1)=(e_1b_1)^{-1}=-e_1b_1^{-1}$).
\end{ex}

We are now in a position to make the following remark.

\begin{rmk}
    Any element of $DStem^{pol}(\check{D},\mathbb{DH}_\mathbb{C})$ is the natural extension $\check{F}$ of some $F\in Stem^{pol}(D,\mathbb{DH}_\mathbb{C})$, which by Remark~\ref{rmk:stempol} takes the form $F(z_1)=\sum_{\ell,m=0}^nz_1^\ell\bar z_1^ma_{\ell m}$ for some $n\in\mathbb{N}$ and some finite sequence $\{a_{\ell m}\}_{\ell,m=0}^n\subset\mathbb{DH}$. Since natural extension is a right $\mathbb{DH}$-module isomorphism and a $\ast$-algebra isomorphism, the computations made in the last example prove that $\check{F}(z)=\sum_{\ell,m=0}^nz^\ell\bar z^ma_{\ell m}$ for all $z\in\check{\mathbb{R}}_\mathbb{C}=\mathbb{DR}_\mathbb{C}$.
\end{rmk}

Along the same lines, for the natural extension $\check{F}$ of some $F\in Stem^{pol}(\mathbb{R}_\mathbb{C},\mathbb{DH}_\mathbb{C})$, we remark: for any dual numbers $a,b$, we may compute $\check{F}(a+e_1b)$ simply by replacing each $a_1$ or $b_1$ in the expression of $F(a_1+e_1b_1)$ by $a$ or $b$, respectively.

If $F=G+\epsilon H=G_0+\epsilon H_0+e_1(G_1+\epsilon H_1)$ with $G_0,H_0,G_1,H_1:D\to\mathbb{H}$ and if we define functions $\check{F}_0,\check{F}_1:\check{D}\to\mathbb{DH}$ by the formula
\begin{equation}\label{eq:componentsofextendedstem}
        \check{F}_\ell(a+e_1b)\coloneqq G_\ell(a_1+e_1b_1)+\epsilon\left(H_\ell(a_1+e_1b_1)+a_2\,\frac{\partial G_\ell}{\partial \alpha}(a_1+e_1b_1)+b_2\,\frac{\partial G_\ell}{\partial \beta}(a_1+e_1b_1)\right)\,,
\end{equation}
then: $\check{F}=\check{F}_0+e_1\check{F}_1$; $\check{F}_0$ is the natural extension of $G_0+\epsilon H_0$; and $e_1\check{F}_1$ is the natural extension of $e_1(G_1+\epsilon H_1)$. On the other hand, we point out that the natural extension of $G_1+\epsilon H_1$ is undefined because $G_1+\epsilon H_1$ is not a stem function. Moreover, we remark that $\overline{\check{F}}=\check{F}_0-e_1\check{F}_1$ is the natural extension of $G_0+\epsilon H_0-e_1(G_1+\epsilon H_1)=\overline{F}$, i.e., that $\overline{\check{F}}=\widecheck{\overline{F}}$. Finally, since $\check{F}_1\equiv0$ is equivalent to $F_1\equiv0$, the function $\check{F}$ takes values in $\mathbb{DH}$ if, and only if, $F$ does.

\begin{ex}\label{ex:REIMcomponents}
    Consider again the polynomial stem functions $RE,IM,IM^2:\mathbb{R}_\mathbb{C}\to\mathbb{R}_\mathbb{C}$ and the analytic stem function $IM^{-1}:\mathbb{R}_\mathbb{C}\setminus\mathbb{R}\to\mathbb{R}_\mathbb{C}$ of Example~\ref{ex:REIM}. By direct inspection, $RE_0(a_1+e_1b_1)=a_1,RE_1\equiv0,IM_0\equiv0,IM_1(a_1+e_1b_1)=b_1,(IM^2)_0(a_1+e_1b_1)=-b_1^2,(IM^2)_1\equiv0,(IM^{-1})_0\equiv0,(IM^{-1})_1(a_1+e_1b_1)=-b_1^{-1}$. Consistently, we have
    \footnotesize\begin{align*}
    &\widecheck{RE}_0(a+e_1b)=a_1+\epsilon(0+a_2+0)=a\,,& &\widecheck{RE}_1(a+e_1b)\equiv0\,,\\
        &\widecheck{IM}_0(a+e_1b)\equiv0\,,& &\widecheck{IM}_1(a+e_1b)=b_1+\epsilon(0+0+b_2)=b\,,\\
        &(\widecheck{IM}^2)_0(a+e_1b)=-b_1^2+\epsilon(0+0-2b_2b_1)=-b^2\,,& &(\widecheck{IM}^2)_1(a+e_1b)\equiv0\,,\\
        &(\widecheck{IM}^{-1})_0(a+e_1b)\equiv0\,,& &(\widecheck{IM}^{-1})_1(a+e_1b)=-b_1^{-1}+\epsilon(0+0+b_2b_1^{-2})=-b^{-1}\,.
    \end{align*}
\end{ex}

By formula~\eqref{eq:componentsofextendedstem}, the function $\check{F}$ takes values in $\mathbb{DR}_\mathbb{C}$ if, and only if, $F$ does. This motivates us to set
\[DStem^k(\check{D},\mathbb{DR}_\mathbb{C})\coloneqq\{\check{F}:F\in Stem^{k+1,k}(D,\mathbb{DR}_\mathbb{C})\}\subset\mathcal{C}^k(\check{D},\mathbb{DR}_\mathbb{C})\]
for all $k\in\mathbb{N}\cup\{\infty,\omega\}$. On the other hand, if $F$ takes values in $\mathbb{H}_\mathbb{C}$, then $\check{F}$ maps $D=\check{D}\cap\mathbb{R}_\mathbb{C}$ (but not all of $\check{D}$, in general) into $\mathbb{H}_\mathbb{C}$. We add a remark that will be useful later.

\begin{rmk}\label{rmk:specialproductofstem}
    Since $Stem^{1,0}(D,\mathbb{DR}_\mathbb{C})$ is contained in the center of $Stem^{1,0}(D,\mathbb{DH}_\mathbb{C})$, we remark that 
    $DStem^0(\check{D},\mathbb{DR}_\mathbb{C})$ is contained in the center of $DStem^0(\check{D},\mathbb{DH}_\mathbb{C})$.
\end{rmk}

Let us add some further properties of dual stem functions.

\begin{prop}\label{prop:symmetriesofdualstem}
    Let $F=G+\epsilon H\in Stem^{1,0}(D,\mathbb{DH}_\mathbb{C})$ and $a,b\in\mathbb{DR}$. If $a+e_1b\in\check{D}$, then $\check{F}\left(\overline{a+e_1b}\right)=\overline{\check{F}(a+e_1b)}$. If $a+e_1\epsilon b_2\in\check{D}$, then $\check{F}_0(a+e_1\epsilon b_2)=G_0(a_1)+\epsilon\left(H_0(a_1)+a_2\,\frac{\partial G_0}{\partial \alpha}(a_1)\right)$ and $\check{F}_1(a+e_1\epsilon b_2)=\epsilon b_2\,\frac{\partial G_1}{\partial \beta}(a_1)\in\epsilon\mathbb{H}$. In particular, $\check{F}_1(a)=0$ if $a\in\check{D}\cap\mathbb{DR}$.
\end{prop}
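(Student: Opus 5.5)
The plan is to work with the component formula~\eqref{eq:componentsofextendedstem}, writing $F=G+\epsilon H=G_0+\epsilon H_0+e_1(G_1+\epsilon H_1)$, together with the even--odd properties of the stem functions $G$ and $H$ in the variable $\beta$. Since $\overline{\check{F}}=\check{F}_0-e_1\check{F}_1$, the first claim is equivalent to two parity statements for every $a+e_1b\in\check{D}$:
\[
\check{F}_0(a-e_1b)=\check{F}_0(a+e_1b),\qquad \check{F}_1(a-e_1b)=-\check{F}_1(a+e_1b).
\]
Note that $\overline{a+e_1b}=a-e_1b=a_1-e_1b_1+\epsilon(a_2-e_1b_2)$, and $\check{D}=D+\epsilon\mathbb{R}_\mathbb{C}$ is invariant under conjugation because $D$ is.

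First I will record the parities of the derivatives. Because $G_0,H_0$ are even in $\beta$ and $G_1,H_1$ are odd, differentiation in $\alpha$ preserves parity in $\beta$ and differentiation in $\beta$ reverses it. Hence $\partial G_0/\partial\alpha$ and $\partial G_1/\partial\beta$ are even, while $\partial G_0/\partial\beta$ and $\partial G_1/\partial\alpha$ are odd.

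Next I substitute $(a_1,b_1,a_2,b_2)\mapsto(a_1,-b_1,a_2,-b_2)$ into~\eqref{eq:componentsofextendedstem}. For $\ell=0$ the four terms are $G_0$, $H_0$, $a_2\,\partial G_0/\partial\alpha$ and $b_2\,\partial G_0/\partial\beta$. Each is unchanged: in the last one, the sign change of $b_2$ cancels the oddness of $\partial G_0/\partial\beta$. For $\ell=1$, each of the four terms changes sign by the same bookkeeping. This proves the first claim. An alternative route is to observe that $\widecheck{\overline{F}}=\overline{\check{F}}$ has already been noted, and that $\check{F}(\bar z)$ and $\widecheck{\overline{F}}(z)$ agree by the chain rule. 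The direct component check is cleaner, so I will use it.

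For the remaining claims I set $b_1=0$ in~\eqref{eq:componentsofextendedstem}. Then $a_1+e_1b_1=a_1\in D\cap\mathbb{R}$, and by oddness $G_1(a_1)=H_1(a_1)=0$ and $\frac{\partial G_1}{\partial\alpha}(a_1)=0$, while $\frac{\partial G_0}{\partial\beta}(a_1)=0$. Substituting these vanishings gives exactly the stated expressions for $\check{F}_0(a+e_1\epsilon b_2)$ and $\check{F}_1(a+e_1\epsilon b_2)=\epsilon b_2\,\frac{\partial G_1}{\partial\beta}(a_1)\in\epsilon\mathbb{H}$. Finally, taking $b_2=0$ yields $\check{F}_1(a)=0$.

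The only delicate point, which I expect to be the main obstacle, is justifying the vanishing of the odd derivatives at real points. Here $D\cap\mathbb{R}$ lies in the open, conjugation-invariant set $D$, so $\beta\mapsto G_0(a_1+e_1\beta)$ is a $C^1$ even function near $\beta=0$, and its derivative at $0$ vanishes. Similarly $\alpha\mapsto G_1(\alpha)$ vanishes identically on the real interval around $a_1$, so its $\alpha$-derivative vanishes there. Everything else is routine sign bookkeeping.
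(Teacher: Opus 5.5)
Your proof is correct and is essentially the paper's argument. The second part is identical: setting $b_1=0$ and using that $G_1+\epsilon H_1$, $\frac{\partial G_1}{\partial\alpha}$ and $\frac{\partial G_0}{\partial\beta}$ vanish on $D\cap\mathbb{R}$. For the conjugation symmetry, the paper works in the Wirtinger form $\check{F}(z_1+\epsilon z_2)=F(z_1)+\epsilon\bigl(z_2\frac{\partial G}{\partial z}(z_1)+\bar z_2\frac{\partial G}{\partial \bar z}(z_1)\bigr)$, using that $F$, $\frac{\partial G}{\partial z}$, $\frac{\partial G}{\partial \bar z}$ are stem functions and that $\epsilon,z_2,\bar z_2$ are central; this is the complex-notation version of your term-by-term parity check in real components.
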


\begin{proof}
        
    Since $F=G+\epsilon H=G_0+\epsilon H_0+e_1(G_1+\epsilon H_1),\frac{\partial G}{\partial z}$ and $\frac{\partial G}{\partial \overline{z}}$ are stem functions, they commute with the involution $z\mapsto\bar z$. In particular, $G_1+\epsilon H_1$ (whence $\frac{\partial G_1}{\partial\alpha}$) vanishes identically in $D\cap\mathbb{R}$. So does $\frac{\partial G_0}{\partial\beta}$. For all $z=z_1+\epsilon z_2\in\check{D}$,
    \begin{align*}
        \check{F}(\bar z)&=\check{F}(\bar z_1+\epsilon \bar z_2)=F(\bar z_1)+\epsilon\left(\bar z_2\,\frac{\partial G}{\partial z}(\bar z_1)+z_2\,\frac{\partial G}{\partial \overline{z}}(\bar z_1)\right)\\
        &=\overline{F(z_1)}+\epsilon\left(\bar z_2\,\overline{\frac{\partial G}{\partial z}(z_1)}+z_2\,\overline{\frac{\partial G}{\partial \overline{z}}(z_1)}\right)\\
        &=\overline{F(z_1)+\epsilon\left(z_2\,\frac{\partial G}{\partial z}(z_1)+\bar z_2\,\frac{\partial G}{\partial \overline{z}}(z_1)\right)}=\overline{\check{F}(z)}\,,
    \end{align*}
    where we have used that fact that $\epsilon,z_2,\bar z_2$ belong to the center $\mathbb{DR}_\mathbb{C}$ of $\mathbb{DH}_\mathbb{C}$. If $a=a_1+\epsilon a_2\in\mathbb{DR}$ has  $a_1\in D$ and if $b_2\in\mathbb{R}$, then (using~\eqref{eq:componentsofextendedstem})
    \begin{align*}
            \check{F}_0(a+e_1\epsilon b_2)&=G_0(a_1)+\epsilon\left(H_0(a_1)+a_2\,\frac{\partial G_0}{\partial \alpha}(a_1)+b_2\,\frac{\partial G_0}{\partial \beta}(a_1)\right)\\
            &=G_0(a_1)+\epsilon\left(H_0(a_1)+a_2\,\frac{\partial G_0}{\partial \alpha}(a_1)\right)\,,\\
           \check{F}_1(a+e_1\epsilon b_2)&=G_1(a_1)+\epsilon\left(H_1(a_1)+a_2\,\frac{\partial G_1}{\partial \alpha}(a_1)+b_2\,\frac{\partial G_1}{\partial \beta}(a_1)\right)=\epsilon b_2\,\frac{\partial G_1}{\partial \beta}(a_1)\,.
    \end{align*}
    It follows at once that $\check{F}_1(a+e_1\epsilon b_2)\in\epsilon\mathbb{H}$ and that $\check{F}_1(a)=0$, as desired.
\end{proof}

%%%%%%%%%%%%%%%%%%%%%%%%%%%%%

\section{Dual slice functions}\label{sec:dualslice}

We are now ready for the announced construction of a new function class over $\mathbb{DH}$. Again, we fix an open subset $D$ of $\mathbb{R}_\mathbb{C}$ that is invariant with respect to conjugation.

\begin{defn}\label{def:dualslice}
    Let $\check{F}=\check{F}_0+e_1\check{F}_1\in DStem^0(\check{D},\mathbb{DH}_\mathbb{C})$. The \emph{dual slice function induced by $\check{F}$} is the function $\mathcal{I}(\check{F}):\widecheck{\Omega}_D\to\mathbb{DH}$ with
    \[\mathcal{I}(\check{F})(a+Ib)=\check{F}_0(a+e_1b)+I\check{F}_1(a+e_1b)\]
    for all $a+e_1b\in\check{D},I\in\mathbb{S}_\mathbb{DH}$. The function $\mathcal{I}(\check{F})$ is called \emph{circular} if $\check{F}$ takes values in $\mathbb{DH}$, i.e., if $\check{F}_1\equiv0$.
    
    We define the set $\mathcal{DS}(\widecheck{\Omega}_D)\coloneqq\mathcal{I}(DStem^0(\check{D},\mathbb{DH}_\mathbb{C}))$ and call its elements \emph{dual slice functions} on $\widecheck{\Omega}_D$. We also set $\mathcal{DS}^k(\widecheck{\Omega}_D)\coloneqq\mathcal{I}(DStem^k(\check{D},\mathbb{DH}_\mathbb{C}))$ for all $k\in\mathbb{N}\cup\{\infty,\omega\}$. In particular, $\mathcal{DS}^0(\widecheck{\Omega}_D)=\mathcal{DS}(\widecheck{\Omega}_D)$.
\end{defn}

In other words, $\mathcal{I}(\check{F})$ is defined to make the following diagrams commute for every $I\in\mathbb{S}_\mathbb{DH}$:
\begin{center}
    \begin{tikzcd}[ampersand replacement=\&,cramped]
	{\check{D}} \&\& {\mathbb{DH}_\mathbb{C}} \\
	\& \circlearrowleft \\
	{\widecheck{\Omega}_D} \&\& {\mathbb{DH}}
	\arrow["{\check{F}}", from=1-1, to=1-3]
	\arrow["{\phi_I}"', from=1-1, to=3-1]
	\arrow["{\phi_I}", from=1-3, to=3-3]
	\arrow["\mathcal{I}(\check{F})"', from=3-1, to=3-3].
    \end{tikzcd}
    \qquad
    \begin{tikzcd}[ampersand replacement=\&,cramped]
	{\mathbb{S}_\mathbb{DH}\times\check{D}} \&\& {\mathbb{S}_\mathbb{DH}\times\mathbb{DH}_\mathbb{C}} \\
	\& \circlearrowleft \\
	{\widecheck{\Omega}_D} \&\& {\mathbb{DH}}
	\arrow["{(id,\check{F})}", from=1-1, to=1-3]
	\arrow["{\Phi}"', from=1-1, to=3-1]
	\arrow["{\Phi}", from=1-3, to=3-3]
	\arrow["\mathcal{I}(\check{F})"', from=3-1, to=3-3].
    \end{tikzcd}
\end{center}

We can prove that $\mathcal{I}(\check{F})$ is well defined using property {\it 3} in Proposition~\ref{prop:decompositionintoDCs}. Indeed, for $a,b=b_1+e_1b_2\in\mathbb{DR}$ with $b_1>0$ and all $I,J\in\mathbb{S}_\mathbb{DH}$ we have
    \begin{align*}
        \Phi(I,\check{F}(a+e_1b))&=\check{F}_0(a+e_1b)+I\check{F}_1(a+e_1b)=\check{F}_0(a-e_1b)-I\check{F}_1(a-e_1b)\\
        &=\Phi(-I,\check{F}(a-e_1b))\\
        \Phi(I,\check{F}(a+e_1\epsilon b_2))&=\check{F}_0(a+e_1\epsilon b_2)+I\check{F}_1(a+e_1\epsilon b_2)=\check{F}_0(a+e_1\epsilon b_2)+I_1\check{F}_1(a+e_1\epsilon b_2)\\
        &=\Phi(I_1,\check{F}(a+e_1\epsilon b_2))=\Phi(-I_1,\check{F}(a-e_1\epsilon b_2))\\
        \Phi(I,\check{F}(a))&=\check{F}_0(a)+I\check{F}_1(a)=\check{F}_0(a)=\Phi(J,\check{F}(a))
    \end{align*}
whenever these expressions are defined. Here, we have repeatedly used Proposition~\ref{prop:symmetriesofdualstem}.

Dual slice functions $\widecheck{\Omega}_D\to\mathbb{DH}$ can also be seen as extensions of slice functions $\Omega_D\to\mathbb{DH}$. Indeed, for all $a_1+Ib_1\in\Omega_D$ we have $\mathcal{I}(\check{F})(a_1+Ib_1)=\check{F}_0(a_1+e_1b_1)+I\check{F}_1(a_1+e_1b_1)=F_0(a_1+e_1b_1)+IF_1(a_1+e_1b_1)=\mathcal{I}(F)(a_1+Ib_1)$. In other words, $\mathcal{I}(\check{F})_{|_{\Omega_D}}=\mathcal{I}(F)$. This justifies the next definition.

\begin{defn}\label{def:dualslice2}
    Let $F\in Stem^{1,0}(D,\mathbb{DH}_\mathbb{C})$ and $f\coloneqq\mathcal{I}(F)\in\mathcal{S}^{1,0}(\Omega_D,\mathbb{DH})$. Then $\check{f}\coloneqq\mathcal{I}(\check{F})$ is called the \emph{natural extension} of $f$ to $\widecheck{\Omega}_D$.
    
    We define the set $\mathcal{DSR}(\widecheck{\Omega}_D)\coloneqq\{\check{f}:f\in\mathcal{SR}(\Omega_D)\}$ and call its elements \emph{dual slice regular functions} on $\widecheck{\Omega}_D$. The symbols $\mathcal{DSR}_\mathbb{R}(\widecheck{\Omega}_D),\mathcal{DSR}_\mathbb{DR}(\widecheck{\Omega}_D)$ will denote the sets of extensions of elements of $\mathcal{SR}_\mathbb{R}(\Omega_D),\mathcal{SR}_\mathbb{DR}(\Omega_D)$, respectively.
\end{defn}

Let $h,k\in\mathbb{N}\cup\{\infty,\omega\}$. If we set $\mathcal{S}^{h,k}(\Omega_D)\coloneqq\mathcal{I}(Stem^{h,k}(D,\mathbb{DH}_\mathbb{C}))=\mathcal{S}^{h}_\mathbb{H}(\Omega_D)+\epsilon\,\mathcal{S}^{k}_\mathbb{H}(\Omega_D)$, then $\mathcal{DS}^k(\widecheck{\Omega}_D)$ is the set of natural extensions of elements of $\mathcal{S}^{k+1,k}(\Omega_D)=\mathcal{S}^{k+1}_\mathbb{H}(\Omega_D)+\epsilon\,\mathcal{S}^{k}_\mathbb{H}(\Omega_D)$. For future use, we also define $\mathcal{S}_\mathbb{DR}^{h,k}(\Omega_D)\coloneqq\mathcal{I}(Stem^{h,k}(D,\mathbb{DR}_\mathbb{C}))=\mathcal{S}^{h}_\mathbb{R}(\Omega_D)+\epsilon\,\mathcal{S}^{k}_\mathbb{R}(\Omega_D)\supseteq\mathcal{S}^{h}_\mathbb{R}(\Omega_D)$.

\begin{ex}
    Consider the (slice-preserving) polynomial slice functions $\operatorname{Re},\operatorname{Im}\in\mathcal{S}^{pol}(Q_\mathbb{DH})$, mapping every $x=a_1+Ib_1\in Q_\mathbb{DH}$ (with $a_1,b_1\in\mathbb{R},I\in\mathbb{S}_\mathbb{DH}$) into $\operatorname{Re}(x)=\frac{x+x^c}{2}=\frac{t(x)}{2}=a_1$ and $\operatorname{Im}(x)=\frac{x-x^c}{2}=x-\frac{t(x)}{2}=Ib_1$, respectively. They are induced by stem functions $RE,IM:\mathbb{R}_\mathbb{C}\to\mathbb{R}_\mathbb{C}$, whose natural extensions we already treated in Examples~\ref{ex:REIM} and~\ref{ex:REIMcomponents}. Perusing the formulas in the latter example, we find that the slice-preserving dual slice functions $\widecheck{\operatorname{Re}}=\mathcal{I}(\widecheck{RE}),\widecheck{\operatorname{Im}}=\mathcal{I}(\widecheck{IM})$ can be expressed as \[\widecheck{\operatorname{Re}}(a+Ib)=a,\quad\widecheck{\operatorname{Im}}(a+Ib)=Ib\]
    for all $a,b\in\mathbb{DR}$. Taking into account Proposition~\ref{prop:classes}, in each equivalence class $[a+Ib]$ we see that $\widecheck{\operatorname{Re}}$ is constantly equal to $a$ and $\widecheck{\operatorname{Im}}$ coincides with the translation $x\mapsto x-a$. We also have the alternative expressions
    \begin{align*}
        \widecheck{\operatorname{Re}}(x)=\frac{t(x)}{2}=\frac{x+x^c}{2}\,,\quad\widecheck{\operatorname{Im}}(x)=x-\frac{t(x)}{2}=\frac{x-x^c}{2}\,,
    \end{align*}
    valid for all $x\in\mathbb{DH}$. We remark that $\widecheck{\operatorname{Re}}(x_1+\epsilon x_2)=\operatorname{Re}(x_1)+\epsilon\operatorname{Re}(x_2),\widecheck{\operatorname{Im}}(x_1+\epsilon x_2)=\operatorname{Im}(x_1)+\epsilon\operatorname{Im}(x_2)$ for all $x_1,x_2\in\mathbb{H}$. In other words, $\widecheck{\operatorname{Re}}$ is the natural projection $\mathbb{DH}\to\mathbb{DR}$ and $\widecheck{\operatorname{Im}}$ is the natural projection $\mathbb{DH}\to\operatorname{Im}(\mathbb{H})+\epsilon\operatorname{Im}(\mathbb{H})$. In particular, the zero set of $\widecheck{\operatorname{Im}}$ is $\mathbb{DR}$ and the set where $\widecheck{\operatorname{Im}}$ takes values that are not invertible is $\mathbb{R}+\epsilon\mathbb{H}$.
\end{ex}

We point out that circular dual slice functions on $\widecheck{\Omega}_D$ are exactly the natural extensions to $\widecheck{\Omega}_D$ of circular slice functions on $\Omega_D$. Just as the latter were constant on each sphere $\mathbb{S}_x^\mathbb{DH}$ with $x\in\Omega_D$, the former are constant on classes.

\begin{prop}\label{prop:circular}
    If $\check{f}\in\mathcal{DS}(\widecheck{\Omega}_D)$ is circular, then it is constant in $[y]$ for every $y\in\widecheck{\Omega}_D$.
\end{prop}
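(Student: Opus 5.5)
The plan is to combine the explicit description of equivalence classes in Proposition~\ref{prop:classes} with the definition of $\mathcal{I}(\check{F})$. Since $\check{f}=\mathcal{I}(\check{F})$ is circular, $\check{F}_1\equiv0$. Hence $\check{f}(a+Ib)=\check{F}_0(a+e_1b)$ for all $a+e_1b\in\check{D}$ and all $I\in\mathbb{S}_\mathbb{DH}$, and this value does not depend on $I$. Fix $y\in\widecheck{\Omega}_D$ and write $y=a+Ib$ with $a,b\in\mathbb{DR}$, $I\in\mathbb{S}_\mathbb{DH}$, using Proposition~\ref{prop:decompositionintoDCs}. We then split into two cases according to whether $b\notin\epsilon\mathbb{R}$ or $b\in\epsilon\mathbb{R}$.

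\emph{Case $b\notin\epsilon\mathbb{R}$.} Proposition~\ref{prop:classes} gives $[y]=a+b\mathbb{S}_\mathbb{DH}$, so every $x\in[y]$ has the form $a+Jb$ with $J\in\mathbb{S}_\mathbb{DH}$. The point $a+e_1b$ is unchanged and lies in $\check{D}$, so $\check{f}(x)=\check{F}_0(a+e_1b)=\check{f}(y)$. We should also check that $[y]\subseteq\widecheck{\Omega}_D$, but this is immediate from $\widecheck{\Omega}_D=\Phi(\mathbb{S}_\mathbb{DH}\times\check{D})$.

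\emph{Case $b=\epsilon b_2\in\epsilon\mathbb{R}$.} This is the main obstacle, because here $[y]=a+\epsilon\operatorname{Im}(\mathbb{H})$ is larger than $a+b\mathbb{S}_\mathbb{DH}$. A point $x\in[y]$ may be written as $a+Jb'$ with $b'=\epsilon b'_2$ for an arbitrary $b'_2\in\mathbb{R}$, so the second argument of $\check{F}_0$ genuinely changes. To handle this, we use Proposition~\ref{prop:symmetriesofdualstem}, which states that
\[\check{F}_0(a+e_1\epsilon b_2)=G_0(a_1)+\epsilon\left(H_0(a_1)+a_2\,\frac{\partial G_0}{\partial\alpha}(a_1)\right).\]
The right-hand side is independent of $b_2$. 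Therefore $\check{f}(x)=\check{F}_0(a+e_1\epsilon b'_2)=\check{F}_0(a+e_1\epsilon b_2)=\check{f}(y)$. Membership $[y]\subseteq\widecheck{\Omega}_D$ holds because $a_1\in D\cap\mathbb{R}$ and $\widecheck{\Omega}_D=\Omega_{D,\mathbb{H}}+\epsilon\mathbb{H}$.

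Both cases together give the claim. The only delicate point is the second case, and it reduces entirely to the vanishing of $\partial G_0/\partial\beta$ on $D\cap\mathbb{R}$, which follows from the evenness of $G_0$ in $\beta$.
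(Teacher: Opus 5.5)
Your proposal is correct and follows essentially the same route as the paper. In both, the independence of $\check{F}_0(a+e_1b)$ from $I$ handles $y\notin\mathbb{R}+\epsilon\mathbb{H}$ (where $[y]=a+b\mathbb{S}_\mathbb{DH}$), and Proposition~\ref{prop:symmetriesofdualstem} shows that $\check{F}_0(a+e_1\epsilon b_2)$ does not depend on $b_2$, which handles $[y]=a+\epsilon\operatorname{Im}(\mathbb{H})$; your explicit checks that $[y]\subseteq\widecheck{\Omega}_D$ and that every point of $a+\epsilon\operatorname{Im}(\mathbb{H})$ can be written as $a+J\epsilon b_2'$ are details the paper leaves implicit.
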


\begin{proof}
    Let us inspect Definition~\ref{def:dualslice}. Since $\check{f}$ is circular, $\check{F}=\check{F}_0,\check{F}_1\equiv0$ and the value $\check{f}(a+Ib)=\check{F}_0(a+e_1b)$ is independent of the choice of $I$. Thus, $\check{f}$ is constant in every $a+b\mathbb{S}_\mathbb{DH}$. This proves the statement for all $y=a+Ib\in\widecheck{\Omega}_D\setminus(\mathbb{R}+\epsilon\mathbb{H})$, which have $[y]=a+b\mathbb{S}_\mathbb{DH}$ by Proposition~\ref{prop:classes}.
    
    Now assume $y=a+\epsilon Ib_2\in\widecheck{\Omega}_D\cap(\mathbb{R}+\epsilon\mathbb{H})$, whence $[y]=a+\epsilon\operatorname{Im}(\mathbb{H})$ by Proposition~\ref{prop:classes}. By Proposition~\ref{prop:symmetriesofdualstem}, we have
    \begin{align*}
        \check{f}(a+\epsilon Ib_2)&=\check{F}_0(a+e_1\epsilon b_2)=F_0(a_1)+\epsilon a_2\,\frac{\partial G_0}{\partial \alpha}(a_1)\,.
    \end{align*}
    Since this expression does not depend on $b_2$, we conclude that $\check{f}$ is constant in $a+\epsilon\operatorname{Im}(\mathbb{H})$, as desired.
\end{proof}

Circular dual slice functions include the natural extensions $\check{f}^\circ_s,\check{f}'_s$ of $f^\circ_s,f'_s$, defined, respectively, in $\widecheck{\Omega}_D,\widecheck{\Omega}_{D\setminus\mathbb{R}}$ (or both in $\widecheck{\Omega}_D$ if $f\in\mathcal{S}^{2,1}(\Omega_D)$, which guarantees $f'_s\in\mathcal{S}^{1,0}(\Omega_D)$).

\begin{rmk}
    Assume $f=\mathcal{I}(F)\in\mathcal{S}^{1,0}(\Omega_D)$ with $F=F_0+e_1F_1$. Taking into account that $f^\circ_s=\mathcal{I}(F_0)$ and that $f'_s$ is the circular slice function induced by $a_1+e_1b_1\mapsto b_1^{-1}F_1(a_1+e_1b_1)$, we conclude that $\check{f}^\circ_s=\mathcal{I}(\check F_0)$ and that $\check{f}'_s$ is the circular dual slice function induced by $a+e_1b\mapsto b^{-1}\check F_1(a+e_1b)$. Thus, for all $a+e_1b\in\widecheck{D},I\in\mathbb{S}_\mathbb{DH}$, $\check{f}^\circ_s(a+Ib)=\check{F}_0(a+e_1b)$ and, provided $b_1\neq0$, $\check{f}'_s(a+Ib)=b^{-1}\check{F}_1(a+e_1b)$. An inspection in Definition~\ref{def:dualslice} allows us to conclude that $\check{f}^\circ_s(x)=\frac{\check{f}(x)+\check{f}(x^c)}{2}$ for all $x\in\widecheck{\Omega}_D$ and $\check{f}'_s(x)=(x-x^c)^{-1}\left(\check{f}(x)-\check{f}(x^c)\right)$ for all $x\in\widecheck{\Omega}_{D\setminus\mathbb{R}}$. In particular, $\check{f}^\circ_s(x)=\check{f}(x)$ for all $x\in\widecheck{\Omega}_D\cap\mathbb{DR}$.
\end{rmk}

%%%%%%%%%%%%%%%%%%%%%%%%%%%%%

\section{The algebra of dual slice functions}\label{sec:dualslicealgebra}

The set of dual slice functions admits the algebraic structures described in the next result.

\begin{prop}
        The class $\mathcal{DS}(\widecheck{\Omega}_D)$ is a right $\mathbb{DH}$-module and natural extension is a right $\mathbb{DH}$-module isomorphism $\mathcal{S}^{1,0}(\Omega_D)\to\mathcal{DS}(\widecheck{\Omega}_D)$, whose inverse map is restriction to $\Omega_D$. The real vector space $\mathcal{DS}(\widecheck{\Omega}_D)$ admits a unique real $\ast$-algebra structure $(\mathcal{DS}(\widecheck{\Omega}_D),+,\cdot,\,^c)$ making natural extension a real $\ast$-algebra isomorphism. Additionally: for any $f\in\mathcal{S}^{1,0}(\Omega_D)$, the natural extension of the map $x\mapsto f(x^c)$ is the map $x\mapsto\check{f}(x^c)$, which is therefore itself an element of $\mathcal{DS}(\widecheck{\Omega}_D)$.
\end{prop}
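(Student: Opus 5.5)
The plan is to factor natural extension of slice functions as a composition of three maps whose properties are already known:
\[
\mathcal{S}^{1,0}(\Omega_D)\xrightarrow{\ \mathcal{I}^{-1}\ }Stem^{1,0}(D,\mathbb{DH}_\mathbb{C})\xrightarrow{\ F\mapsto\check F\ }DStem^0(\check D,\mathbb{DH}_\mathbb{C})\xrightarrow{\ \mathcal{I}\ }\mathcal{DS}(\widecheck{\Omega}_D).
\]
The first map is a right $\mathbb{DH}$-module isomorphism, because $\mathcal{I}:Stem(D,\mathbb{DH}_\mathbb{C})\to\mathcal{S}(\Omega_D)$ is one and $\mathcal{S}^{1,0}(\Omega_D)$ is by definition the image of $Stem^{1,0}(D,\mathbb{DH}_\mathbb{C})$. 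The second map is a module isomorphism and a $\ast$-algebra isomorphism, by the theorem on natural extension of stem functions in Section~\ref{sec:dualstem}.

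For the third map, $\mathcal{I}$ on dual stem functions, right $\mathbb{DH}$-linearity is immediate from Definition~\ref{def:dualslice}. Indeed, $\check F h=\check F_0h+e_1\check F_1h$, so $\mathcal{I}(\check Fh)(a+Ib)=\check F_0h+I\check F_1h=\mathcal{I}(\check F)(a+Ib)\,h$, and additivity is just as direct. Surjectivity holds by the definition of $\mathcal{DS}(\widecheck{\Omega}_D)$. For injectivity, the key observation is that $\mathcal{I}(\check F)_{|_{\Omega_D}}=\mathcal{I}(F)$, which the paper has already noted. If $\mathcal{I}(\check F)\equiv0$, then $\mathcal{I}(F)\equiv0$ on $\Omega_D$, so $F\equiv0$ by injectivity of $\mathcal{I}$ on stem functions, and therefore $\check F\equiv0$. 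The composite map $f\mapsto\check f$ is thus a right $\mathbb{DH}$-module isomorphism. Since $\check f_{|_{\Omega_D}}=f$, its inverse is restriction to $\Omega_D$. This in particular shows that $\mathcal{DS}(\widecheck{\Omega}_D)$ is a right $\mathbb{DH}$-module.

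For the $\ast$-algebra structure, I would use transport of structure. First, $\mathcal{S}^{1,0}(\Omega_D)$ is a $\ast$-subalgebra of $\mathcal{S}(\Omega_D)$, because $Stem^{1,0}(D,\mathbb{DH}_\mathbb{C})$ is closed under products and conjugation: writing $(R+\epsilon S)(G+\epsilon H)=RG+\epsilon(SG+RH)$ gives a $C^1$ quaternionic part and a $C^0$ $\epsilon$-part. Then define $\check f\cdot\check g\coloneqq\widecheck{f\cdot g}$ and $(\check f)^c\coloneqq\widecheck{f^c}$. These are well defined because natural extension is a bijection. The resulting structure is a real $\ast$-algebra because it is isomorphic to $\mathcal{S}^{1,0}(\Omega_D)$. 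It is also the unique structure making natural extension a $\ast$-isomorphism, since any such structure must satisfy exactly these formulas. Equivalently, $\mathcal{I}(\check F)\cdot\mathcal{I}(\check G)=\mathcal{I}(\check F\check G)$, because $\widecheck{FG}=\check F\check G$.

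For the last claim, recall from Section~\ref{subsec:slicefunctions} that $x\mapsto f(x^c)$ is the slice function induced by $\overline F$, and $\overline F$ lies in $Stem^{1,0}$ when $F$ does. Its natural extension is therefore $\mathcal{I}(\widecheck{\overline F})=\mathcal{I}(\overline{\check F})$, using the identity $\overline{\check F}=\widecheck{\overline F}$ noted after formula~\eqref{eq:componentsofextendedstem}. It remains to check that $\mathcal{I}(\overline{\check F})(x)=\check f(x^c)$. Write $x=a+Ib$. Since $\phi_I$ is a $\ast$-isomorphism $\mathbb{DR}_\mathbb{C}\to\mathbb{DC}_I$, we have $x^c=a-Ib=a+I(-b)$. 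Hence
\[
\check f(x^c)=\check F_0(a-e_1b)+I\check F_1(a-e_1b)=\check F_0(a+e_1b)-I\check F_1(a+e_1b)=\mathcal{I}(\overline{\check F})(a+Ib),
\]
where the middle equality uses the symmetry $\check F(\bar z)=\overline{\check F(z)}$ from Proposition~\ref{prop:symmetriesofdualstem}.

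The main point requiring care is injectivity of $\mathcal{I}$ on dual stem functions. I would avoid a direct argument on $\check D$, where $\Phi$ fails to be injective over $\mathbb{R}+\epsilon\mathbb{H}$, and instead route it through restriction to $\Omega_D$ as described above.
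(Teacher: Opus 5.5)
Your proposal is correct and is essentially the paper's argument: right $\mathbb{DH}$-linearity by inspection of Definition~\ref{def:dualslice}, bijectivity from surjectivity-by-definition combined with $\check f_{|_{\Omega_D}}=f$, transport of structure via $\check f\cdot\check g\coloneqq\widecheck{f\cdot g}$ and $(\check f)^c\coloneqq\widecheck{f^c}$, and the same computation based on $\overline{\check F}=\widecheck{\overline F}$ for the last claim. Factoring through $Stem^{1,0}(D,\mathbb{DH}_\mathbb{C})$ and checking that $\mathcal{S}^{1,0}(\Omega_D)$ is closed under products and conjugation only make explicit steps that the paper leaves implicit.
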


\begin{proof}
    By direct inspection in Definition~\ref{def:dualslice}, the class $\mathcal{DS}(\widecheck{\Omega}_D)$ is a right $\mathbb{DH}$-module and the extension map $\widecheck{\ }:\mathcal{S}^{1,0}(\Omega_D)\to\mathcal{DS}(\widecheck{\Omega}_D)$ is a right $\mathbb{DH}$-module morphism. Since we already established that $\check{f}_{|_{\Omega_D}}=f$ for all $f\in\mathcal{S}^{1,0}(\Omega_D)$, the extension map $\widecheck{\ }$ is a right $\mathbb{DH}$-module isomorphism whose inverse map is restriction to $\Omega_D$. This proves the first statement. The second statement follows if we set $\check{f}\cdot\check{g}\coloneqq\widecheck{f\cdot g}$ and $\left(\check{f}\right)^c\coloneqq \widecheck{f^c}$. To prove the third statement, we recall the following facts: if $f=\mathcal{I}(F)$, then the map $x\mapsto f(x^c)$ is the slice function induced by $\overline{F}$; $\widecheck{\overline{F}}=\overline{\check{F}}$. Using Definition~\ref{def:dualslice}, we get that the natural extension of $x\mapsto f(x^c)$ is the map $\mathcal{I}\left(\widecheck{\overline{F}}\right)=\mathcal{I}\left(\overline{\check{F}}\right)$, where
    \begin{align*}
        \mathcal{I}\left(\overline{\check{F}}\right)(a+Ib)&=\check{F}_0(a+e_1b)-I\check{F}_1(a+e_1b)=\mathcal{I}(\check{F})(a-Ib)=\check{f}(a-Ib)=\check{f}\left((a+Ib)^c\right)
    \end{align*}
    for all $a+Ib\in\widecheck{\Omega}_D$.
\end{proof}

For all $k\in\mathbb{N}\cup\{\infty,\omega\}$, by construction, $\widecheck{\ }$ maps $\mathcal{S}^{k+1,k}(\Omega_D),\mathcal{SR}(\Omega_D)$ isomorphically into $\mathcal{DS}^k(\widecheck{\Omega}_D),\mathcal{DSR}(\widecheck{\Omega}_D)$, respectively. We enrich the structure of $\mathcal{DS}(\widecheck{\Omega}_D)$ with the next definition.

\begin{defn}
    For all $\check{f}\in\mathcal{DS}(\widecheck{\Omega}_D)$, we set $N(\check{f})\coloneqq\check{f}\cdot\check{f}^c$ and $T(\check{f})\coloneqq\check{f}+\check{f}^c$.
\end{defn}

By construction, $N(\check{f})=\widecheck{N(f)}$ and $T(\check{f})=\widecheck{T(f)}$. Since extension $f\mapsto\check{f}$ is a $*$-algebra isomorphism and since $N(f)=N(f^c)$ for all $f\in\mathcal{S}^{1,0}(\Omega_D)$, it is easy to see that $N(\check{f})=N(\check{f}^c)=\check{f}^c\cdot\check{f}$.

We now establish a direct correspondence between stem functions and dual slice functions.

\begin{rmk}
    We can define a right $\mathbb{DH}$-module isomorphism and $\ast$-algebra isomorphism $\mathscr{I}:Stem^{1,0}(D,\mathbb{DH}_\mathbb{C})\to\mathcal{DS}(\widecheck{\Omega}_D)$ by precomposing the extension isomorphism $\widecheck{\ }:\mathcal{S}^{1,0}(\Omega_D)\to\mathcal{DS}(\widecheck{\Omega}_D)$ with the isomorphism $\mathcal{I}:Stem^{1,0}(D,\mathbb{DH}_\mathbb{C})\to\mathcal{S}^{1,0}(\Omega_D)$. In other words, $\mathscr{I}(F)=\widecheck{\mathcal{I}(F)}$. We remark that $\check{f}=\mathscr{I}(F)$ implies that $\mathscr{I}(\overline{F})(x)=\check{f}(x^c)$ for all $x\in\widecheck{\Omega}_D$.
    
    By construction, $\mathscr{I}(F)=\mathcal{I}(\check{F})$. It follows that the isomorphism $\mathscr{I}$ is also the map $\mathcal{I}:DStem^0(D,\mathbb{DH}_\mathbb{C})\to\mathcal{DS}(\widecheck{\Omega}_D)$, precomposed with the isomorphism $\widecheck{\ }:Stem^{1,0}(D,\mathbb{DH}_\mathbb{C})\to DStem^0(\check{D},\mathbb{DH}_\mathbb{C})$. In other words, the diagram
    \[\begin{tikzcd}[ampersand replacement=\&,cramped]
	   {Stem^{1,0}(D,\mathbb{DH}_\mathbb{C})} \&\& {DStem^0(\check{D},\mathbb{DH}_\mathbb{C})} \\
	   \\
	   {\mathcal{S}^{1,0}(\Omega_D)} \&\& {\mathcal{DS}(\widecheck{\Omega}_D)}
	   \arrow["{\widecheck{\phantom{GG}}}", from=1-1, to=1-3]
	   \arrow["{\mathcal{I}}"', from=1-1, to=3-1]
	   \arrow["{\mathscr{I}}", from=1-1, to=3-3]
	   \arrow["{\mathcal{I}}", from=1-3, to=3-3]
	   \arrow["{\widecheck{\phantom{GG}}}"', from=3-1, to=3-3]
    \end{tikzcd}\]
    is commutative. As a consequence, the map $\mathcal{I}:DStem^0(D,\mathbb{DH}_\mathbb{C})\to\mathcal{DS}(\widecheck{\Omega}_D)$ is both a right $\mathbb{DH}$-module isomorphism and $\ast$-algebra isomorphism. It also maps the involution $\check{F}\mapsto\overline{\check{F}}$ into precomposition with $x\mapsto x^c$. In other words, $\mathcal{I}(\check{F}+\check{G})=\mathcal{I}(\check{F})+\mathcal{I}(\check{G}),\mathcal{I}(\check{F}h)=\mathcal{I}(\check{F})h,\mathcal{I}(\check{F}\check{G})=\mathcal{I}(\check{F})\cdot\mathcal{I}(\check{G}),\mathcal{I}(\check{F}^c)=\mathcal{I}(\check{F})^c$ and $\mathcal{I}\left(\overline{\check{F}}\right)(x)=\mathcal{I}(\check{F})(x^c)$ for all $\check{F},\check{G}\in DStem^0(D,\mathbb{DH}_\mathbb{C})$, all $h\in\mathbb{DH}$ and all $x\in\widecheck{\Omega}_D$.
\end{rmk}

We use the isomorphism $\mathscr{I}$ studied in the last remark to give the next definition.

\begin{defn}
    We define $\mathcal{DS}_\mathbb{R}(\widecheck{\Omega}_D)\coloneqq\mathscr{I}(Stem^{1,0}(D,\mathbb{R}_\mathbb{C}))=\widecheck{\mathcal{S}_\mathbb{R}^{1,0}(\Omega_D)}$ and call its elements \emph{slice-preserving} dual slice functions. We define $\mathcal{DS}_\mathbb{DR}(\widecheck{\Omega}_D)\coloneqq\mathscr{I}(Stem^{1,0}(D,\mathbb{DR}_\mathbb{C}))=\widecheck{\mathcal{S}_\mathbb{DR}^{1,0}(\Omega_D)}=\mathcal{I}(DStem^0(\check{D},\mathbb{DR}_\mathbb{C}))$ and call its elements \emph{dual slice-preserving} dual slice functions. We define $\mathcal{DS}_\mathbb{H}(\widecheck{\Omega}_D)\coloneqq\mathscr{I}(Stem^{1,0}(D,\mathbb{H}_\mathbb{C}))=\widecheck{\mathcal{S}_\mathbb{H}^{1,0}(\Omega_D)}$ and call its elements \emph{quaternion-preserving} dual slice functions. For $A'\in\{\mathbb{R},\mathbb{DR},\mathbb{H}\}$ and $k\in\mathbb{N}\cup\{\infty,\omega\}$, we set $\mathcal{DS}^k_{A'}(\widecheck{\Omega}_D)\coloneqq\mathcal{DS}_{A'}(\widecheck{\Omega}_D)\cap\mathcal{DS}^k(\widecheck{\Omega}_D)$ and $\mathcal{DSR}_{A'}(\widecheck{\Omega}_D)\coloneqq\mathcal{DS}_{A'}(\widecheck{\Omega}_D)\cap\mathcal{DSR}(\widecheck{\Omega}_D)$.
\end{defn}

The terminology we chose is motivated by the next remark.

\begin{rmk}
    If $\check{f}$ is slice-preserving, then $\check{f}(\widecheck{\Omega}_D\cap\mathbb{C}_I)\subseteq\mathbb{C}_I$ for all $I\in\mathbb{S}_\mathbb{DH}$. If $\check{f}$ is dual slice-preserving, then $\check{f}(\widecheck{\Omega}_D\cap\mathbb{DC}_I)\subseteq\mathbb{DC}_I$ for all $I\in\mathbb{S}_\mathbb{DH}$. Finally: if $\check{f}$ is quaternion-preserving, then $f$ is quaternion-preserving; taking into account that $\mathbb{H}\subset Q_\mathbb{DH}$, we conclude that $\check f(\widecheck{\Omega}_D\cap\mathbb{H})=f(\Omega_D\cap\mathbb{H})\subseteq\mathbb{H}$. 
\end{rmk}

We have $\mathcal{DS}_\mathbb{R}(\widecheck{\Omega}_D)=\mathcal{DS}_\mathbb{H}(\widecheck{\Omega}_D)\cap\mathcal{DS}_\mathbb{DR}(\widecheck{\Omega}_D)$. In other words, $\check{f}$ is slice-preserving if, and only if, it is both quaternion-preserving and dual slice-preserving. For any $A'\in\{\mathbb{R},\mathbb{DR},\mathbb{H}\}$ and any $k\in\mathbb{N}\cup\{\infty,\omega\}$, the properties of $\mathscr{I}$ yield that $\mathcal{DS}_{A'}(\widecheck{\Omega}_D),\mathcal{DS}^k_{A'}(\widecheck{\Omega}_D),\mathcal{DSR}_{A'}(\widecheck{\Omega}_D)$ are nested real $^\ast$-subalgebras and right $A'$-submodules of $\mathcal{DS}(\widecheck{\Omega}_D)$. As a consequence of \eqref{eq:decomposition of slice regular}, we find that
\begin{align*}
    \mathcal{DS}^k(\widecheck{\Omega}_D)&=\mathcal{DS}^k_\mathbb{H}(\widecheck{\Omega}_D)+\mathcal{DS}^k_\mathbb{H}(\widecheck{\Omega}_D)\epsilon=\mathcal{DS}^k_\mathbb{H}(\widecheck{\Omega}_D)+\epsilon\mathcal{DS}^k_\mathbb{H}(\widecheck{\Omega}_D)\,,\\
    \mathcal{DSR}(\widecheck{\Omega}_D)&=\mathcal{DSR}_\mathbb{H}(\widecheck{\Omega}_D)+\mathcal{DSR}_\mathbb{H}(\widecheck{\Omega}_D)\epsilon=\mathcal{DSR}_\mathbb{H}(\widecheck{\Omega}_D)+\epsilon\mathcal{DSR}_\mathbb{H}(\widecheck{\Omega}_D)
\end{align*}
for any $k\in\mathbb{N}\cup\{\infty,\omega\}$.

\begin{defn}
    Given $\check f\in\mathcal{DS}(\widecheck{\Omega}_D)$, the unique elements $\check g$ and $\check h$ of $\mathcal{DS}_\mathbb{H}
    (\widecheck{\Omega}_D)$ such that $\check f=\check g+\epsilon \check h$ are called the first and second \emph{quaternion-preserving components} of $\check f$.
\end{defn}

Now fix $f=g+\epsilon h$ with $g=\mathcal{I}(G)\in\mathcal{S}^1_\mathbb{H}(\Omega_D),h=\mathcal{I}(H)\in\mathcal{S}^0_\mathbb{H}(\Omega_D)$. By construction, the first quaternion-preserving component $\check{g}$ of $\check{f}$ is the natural extension of the first quaternion-preserving component $g$ of $f=\check f|_{\Omega_D}$. As for the second quaternion-preserving component $\check{h}$, we remark that  $\epsilon\check{h}$ is the natural extension of $\epsilon h$, where $h$ is the second quaternion-preserving component of $f$. We point out that $\check{f}$ is: quaternion-preserving if, and only if, $\check{h}\equiv0$; dual slice-preserving if, and only if, $\check{g},\check{h}$ are slice-preserving; slice-preserving if, and only if, $\check{g}$ is slice-preserving and $\check{h}\equiv0$.

\begin{rmk}
    If $\check f=\check g+\epsilon \check h,\underline{\check f}=\underline{\check g}+\epsilon \underline{\check h}$ with $\check g,\check h,\underline{\check g},\underline{\check h}\in\mathcal{DS}_\mathbb{H}(\widecheck{\Omega}_D)$, then
    \[\check f\cdot\underline{\check f}=\check g\cdot\underline{\check g}+\epsilon(\check g\cdot\underline{\check h}+\check h\cdot\underline{\check g})\,,\]
    where $\check g\cdot\underline{\check g},\check g\cdot\underline{\check h}+\check h\cdot\underline{\check g}$ both belong to $\mathcal{DS}_\mathbb{H}(\widecheck{\Omega}_D)$.
\end{rmk}

The $\cdot$ product of dual slice functions coincides with the pointwise product in some special cases.

\begin{prop}\label{prop:productofdualslicepointwise}
        Let $\check{f},\check{g}\in\mathcal{DS}(\widecheck{\Omega}_D)$ and let $x\in\widecheck{\Omega}_D$. The equality
        \begin{equation}\label{eq:specialproductofdualslice}
            (\check{f}\cdot\check{g})(x)=\check{f}(x)\check{g}(x)\,,
        \end{equation}        
        while false in general, holds true under any of the following additional hypotheses:
        \begin{enumerate}
            \item $\check{f}$ is dual slice-preserving, which also yields $\check{f}\cdot\check{g}=\check{g}\cdot\check{f}$;
            \item $\check{f},\check{g}$ are circular;
            \item $x\in\widecheck{\Omega}_D\cap\mathbb{DR}$.
        \end{enumerate}
\end{prop}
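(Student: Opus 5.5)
We work throughout at the level of dual stem functions, using the isomorphism $\mathcal{I}:DStem^0(\check{D},\mathbb{DH}_\mathbb{C})\to\mathcal{DS}(\widecheck{\Omega}_D)$ from the last remark. Write $\check f=\mathcal{I}(\check F)$ and $\check g=\mathcal{I}(\check G)$, so that $\check f\cdot\check g=\mathcal{I}(\check F\check G)$. Fix $x=a+Ib$ with $a+e_1b\in\check D$ and $I\in\mathbb{S}_\mathbb{DH}$. The pointwise product $\check F\check G$ has components
\[(\check F\check G)_0=\check F_0\check G_0-\check F_1\check G_1,\qquad (\check F\check G)_1=\check F_0\check G_1+\check F_1\check G_0 .\]
Hence
\[(\check f\cdot\check g)(x)=\check F_0\check G_0-\check F_1\check G_1+I(\check F_0\check G_1+\check F_1\check G_0),\]
with all components evaluated at $a+e_1b$. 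On the other hand,
\[\check f(x)\check g(x)=\check F_0\check G_0+\check F_0I\check G_1+I\check F_1\check G_0+I\check F_1I\check G_1 .\]
Comparing the two expressions, equality \eqref{eq:specialproductofdualslice} holds as soon as $\check F_0$ commutes with $I$ and $I\check F_1I=-\check F_1$. The latter condition holds, for instance, when $\check F_1$ commutes with $I$, since $I^2=-1$. Each of the three cases will be reduced to this criterion.

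\textbf{Case 3.} If $x\in\mathbb{DR}$, we may choose $b=0$, so that $x=a$. Proposition~\ref{prop:symmetriesofdualstem} gives $\check F_1(a)=0=\check G_1(a)$, and $(\check F\check G)_1(a)=0$ as well. Both sides therefore reduce to $\check F_0(a)\check G_0(a)$.

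\textbf{Case 2.} If $\check f$ and $\check g$ are circular, then $\check F_1\equiv0\equiv\check G_1$. Both sides again equal $\check F_0\check G_0$.

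\textbf{Case 1.} If $\check f$ is dual slice-preserving, then $\check F$ takes values in $\mathbb{DR}_\mathbb{C}$, as stated in the definition via $DStem^0(\check D,\mathbb{DR}_\mathbb{C})$. Thus $\check F_0$ and $\check F_1$ take values in $\mathbb{DR}$, the center of $\mathbb{DH}$, and the criterion holds. For the commutativity claim $\check f\cdot\check g=\check g\cdot\check f$, Remark~\ref{rmk:specialproductofstem} gives $\check F\check G=\check G\check F$, and we apply $\mathcal{I}$ to both sides.

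\textbf{Failure in general.} To justify ``false in general'', we will produce a counterexample. The idea is to take a constant $\check f\equiv j$ and $\check g=\widecheck{\operatorname{Im}}$ with $x=i$. Then $\check f\cdot\check g$ is induced by $e_1 j$, which gives $(\check f\cdot\check g)(i)=ij$. The pointwise product is instead $\check f(i)\check g(i)=ji=-ij$, so the two differ.

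The only delicate point in the plan is checking that the choice of representation $(I,a+e_1b)$ of $x$ does not matter. This follows from the well-definedness of $\mathcal{I}$ already established after Definition~\ref{def:dualslice}, so the argument presents no real obstacle.
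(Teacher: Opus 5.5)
Your proposal is correct and follows the paper's proof essentially step for step. You expand $\check F\check G$ into its components, use in case 1 that $\check F_0,\check F_1$ are $\mathbb{DR}$-valued (hence central) together with Remark~\ref{rmk:specialproductofstem} for commutativity, and use $\check F_1=\check G_1=0$ at $a+e_1b$ in cases 2 and 3. Your counterexample $\check f\equiv j$, $\check g=\widecheck{\operatorname{Im}}$, $x=i$ is a valid addition that the paper omits; strictly, $\check f\cdot\check g$ is induced by $a+e_1b\mapsto e_1bj$ rather than by the constant $e_1j$, but only its value at $b=1$ matters.
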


\begin{proof}
    Assume $\check{f}=\mathcal{I}(\check{F}),\check{g}=\mathcal{I}(\check{G})$, where $\check{F}=\check{F}_0+e_1\check{F}_1,\check{G}=\check{G}_0+e_1\check{G}_1$. We compute
    \[\check{F}\check{G}=\check{F}_0\check{G}_0-\check{F}_1\check{G}_1+e_1(\check{F}_0\check{G}_1+\check{F}_1\check{G}_0)\,.\]
    We fix $a+Ib\in\widecheck{\Omega}_D$ and compute
    \begin{align*}
        (\check{f}\cdot\check{g})(a+Ib)&=(\mathcal{I}(\check{F})\cdot\mathcal{I}(\check{G}))(a+Ib)=\mathcal{I}(\check{F}\check{G})(a+Ib)=\\
        &=(\check{F}_0\check{G}_0-\check{F}_1\check{G}_1)(a+e_1b)+I(\check{F}_0\check{G}_1+\check{F}_1\check{G}_0)(a+e_1b)\,.
    \end{align*}
    If $\check{f}$ is dual slice-preserving, then $\check{F}$ takes values in $\mathbb{DR}_\mathbb{C}$. On the one hand, this implies (using Remark~\ref{rmk:specialproductofstem}) that $\check{f}$ belongs to the center of $\mathcal{DS}(\widecheck{\Omega}_D)$, whence $\check{f}\cdot\check{g}=\check{g}\cdot\check{f}$. On the other hand, it implies that $\check{F}_0,\check{F}_1$ take values in $\mathbb{DR}$. Under this additional hypothesis, we obtain
    \begin{align*}
        (\check{f}\cdot\check{g})(a+Ib)&=(\check{F}_0(a+e_1b)+I\check{F}_1(a+e_1b))(\check{G}_0(a+e_1b)+I\check{G}_1(a+e_1b))\\
        &=\check{f}(a+Ib)\,\check{g}(a+Ib)\,,
    \end{align*}
    as desired.

    If $\check{f},\check{g}$ are circular or $b=0$, then $\check{F}_1(a+e_1b)=0=\check{G}_1(a+e_1b)$ (by Definition~\ref{def:dualslice} or by Proposition~\ref{prop:symmetriesofdualstem}, respectively). We obtain
    \begin{align*}
        (\check{f}\cdot\check{g})(a+Ib)&=(\check{F}_0\check{G}_0)(a+e_1b)=\check{f}(a+Ib)\,\check{g}(a+Ib)\,,
    \end{align*}
    as desired.
\end{proof}

\begin{ex}\label{ex:Im^2}
    The dual slice function $\widecheck{\operatorname{Im}}:\mathbb{DH}\to\operatorname{Im}(\mathbb{H})+\epsilon\operatorname{Im}(\mathbb{H})$ has $(\widecheck{\operatorname{Im}}\cdot\check{g})(x)=\widecheck{\operatorname{Im}}(x)\,\check{g}(x)$ for any slice function $g$. In particular, $(\widecheck{\operatorname{Im}}\cdot\widecheck{\operatorname{Im}})(a+Ib)=\widecheck{\operatorname{Im}}^2(a+Ib)=-b^2$. Thus, $\widecheck{\operatorname{Im}}^2=\mathcal{I}\left(\widecheck{IM}^2\right)$. We also have $\widecheck{\operatorname{Im}}^2=\widecheck{\operatorname{Im}}\cdot\widecheck{\operatorname{Im}}=\widecheck{(\operatorname{Im}\cdot\operatorname{Im})}=\widecheck{\operatorname{Im}^2}$.
\end{ex}

All ingredients are now ready for the following representation formula of dual slice functions on classes.

\begin{rmk}\label{rmk:representationformula}
    Let $f\in\mathcal{S}^{1,0}(\Omega_D)$. The equality $f=f^\circ_s+\operatorname{Im}\cdot f'_s$, valid in $\Omega_{D\setminus\mathbb{R}}$, yields that
    \[\check{f}=\check{f}^\circ_s+\widecheck{\operatorname{Im}}\cdot\check{f}'_s=\check{f}^\circ_s+\widecheck{\operatorname{Im}}\,\check{f}'_s\]
    in $\widecheck{\Omega}_{D\setminus\mathbb{R}}$. The same is true throughout $\widecheck{\Omega}_D$ if $f\in\mathcal{S}^{2,1}(\Omega_D)$. Now fix $w\in\widecheck{\Omega}_{D\setminus\mathbb{R}}$ (or any $w\in\widecheck{\Omega}_D$ if $f\in\mathcal{S}^{2,1}(\Omega_D)$): the functions $\check{f}^\circ_s,\check{f}'_s$ are constant in $[w]$ and the restriction
        \[[w]\to\mathbb{DH}\quad x\mapsto\check{f}(x)=\check{f}^\circ_s(w)+\widecheck{\operatorname{Im}}(x)\check{f}'_s(w)=\check{f}^\circ_s(w)+\frac{x-x^c}{2}\check{f}'_s(w)\]
    is a real affine map.
\end{rmk}

We also prove the next result.

\begin{prop}\label{prop:dualsliceproductofcomponents}
    For all $f,g\in\mathcal{S}^{1,0}(\Omega_D)$,
    \begin{align*}
        (\check f\cdot\check  g)(x)&=\check f^\circ_s(x)\check g^\circ_s(x)+\widecheck{\operatorname{Im}}(x)^2\check f'_s(x)\check g'_s(x)+\widecheck{\operatorname{Im}}(x)\left(\check f^\circ_s(x)\check g'_s(x)+\check f'_s(x)\check g^\circ_s(x)\right)\,,\\
        \check f^c(x)&=\check f^\circ_s(x)^c+\widecheck{\operatorname{Im}}(x)\,\check f'_s(x)^c\,,\\
        N(f)(x)&=n\left(\check f^\circ_s(x)\right)+\widecheck{\operatorname{Im}}(x)^2\,n\left(\check f'_s(x)\right)+\widecheck{\operatorname{Im}}(x)\,t\left(\check f^\circ_s(x)\check f'_s(x)^c\right)\,,
    \end{align*}
    in $\widecheck{\Omega}_{D\setminus\mathbb{R}}$. The same is true throughout $\widecheck{\Omega}_D$ if $f,g\in\mathcal{S}^{2,1}(\Omega_D)$.
\end{prop}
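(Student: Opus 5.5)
The plan is to lift the identities of Remark~\ref{rmk:sliceproductofcomponents} from $\Omega_D$ to $\widecheck{\Omega}_D$ through the natural extension map. That map is a $\ast$-algebra isomorphism $\mathcal{S}^{1,0}(\Omega_D)\to\mathcal{DS}(\widecheck{\Omega}_D)$. Afterwards, products of circular and dual slice-preserving factors are evaluated pointwise via Proposition~\ref{prop:productofdualslicepointwise}. Throughout, $N(f)$ on the left of the third formula is read as $N(\check f)$.

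The first step handles $\widecheck{\Omega}_{D\setminus\mathbb{R}}$, i.e.\ $D$ replaced by $D\setminus\mathbb{R}$. There $f^\circ_s,f'_s,\operatorname{Im},\operatorname{Im}^2$ are elements of $\mathcal{S}^{1,0}(\Omega_{D\setminus\mathbb{R}})$. Indeed $f'_s$ is induced by $b_1^{-1}F_1$, which has the same regularity as $F$ away from $b_1=0$; $f^\circ_s$ is induced by $F_0$; and $\operatorname{Im},\operatorname{Im}^2$ are polynomial. So the identity
\[f\cdot g=f^\circ_s\cdot g^\circ_s+\operatorname{Im}^2\cdot f'_s\cdot g'_s+\operatorname{Im}\cdot(f^\circ_s\cdot g'_s+f'_s\cdot g^\circ_s)\]
holds in $\mathcal{S}^{1,0}(\Omega_{D\setminus\mathbb{R}})$. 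Applying the extension isomorphism gives the same identity with every symbol checked, as an identity in $\mathcal{DS}(\widecheck{\Omega}_{D\setminus\mathbb{R}})$. The extension of a restriction of $f$ to the smaller domain is the restriction of $\check f$, since natural extension is defined pointwise from $F$ near $z_1$. We also use $\widecheck{\operatorname{Im}^2}=\widecheck{\operatorname{Im}}^2$ (Example~\ref{ex:Im^2}) and the identification of $\check f^\circ_s,\check f'_s$ as the extensions of $f^\circ_s,f'_s$. The conjugation and $N$ identities are treated the same way, using $T$ and $N$ compatibility: $N(\check f)=\widecheck{N(f)}$ and $T(\check f)=\widecheck{T(f)}$.

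The second step converts each $\cdot$ product into a pointwise product at $x$. The functions $\check f^\circ_s,\check g^\circ_s,\check f'_s,\check g'_s$ are circular, so item 2 of Proposition~\ref{prop:productofdualslicepointwise} gives $(\check f^\circ_s\cdot\check g^\circ_s)(x)=\check f^\circ_s(x)\check g^\circ_s(x)$, and similarly for the other products of circular factors. The functions $\widecheck{\operatorname{Im}}$ and $\widecheck{\operatorname{Im}}^2$ are dual slice-preserving, since $IM,IM^2$ are $\mathbb{R}_\mathbb{C}$-valued. So item 1 gives $(\widecheck{\operatorname{Im}}^2\cdot h)(x)=\widecheck{\operatorname{Im}}(x)^2h(x)$, using $\widecheck{\operatorname{Im}}^2(x)=\widecheck{\operatorname{Im}}(x)^2$ from Example~\ref{ex:Im^2}.

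For conjugation, the conjugate of a circular dual slice function is still circular, with value $\check f^\circ_s(x)^c$. This holds because its stem $\check F_0^c$ is $\mathbb{DH}$-valued. For $N$, I need $N(\check f^\circ_s)(x)=n(\check f^\circ_s(x))$, which again follows from item 2, and the analogous fact for $T$. The one point needing care is showing that $T(\check f^\circ_s\cdot(\check f'_s)^c)$ evaluates to $t(\check f^\circ_s(x)\check f'_s(x)^c)$. I expect this bookkeeping, which confirms every intermediate function is circular or dual slice-preserving, to be the main (mild) obstacle.

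Finally, suppose $f,g\in\mathcal{S}^{2,1}(\Omega_D)$. Then $f'_s,g'_s$ extend to elements of $\mathcal{S}^{1,0}(\Omega_D)$, as noted before Remark~\ref{rmk:representationformula}. Remark~\ref{rmk:sliceproductofcomponents} then holds on all of $\Omega_D$, because $\mathcal{S}^{2,1}\subset\mathcal{S}^1$ componentwise. Repeating the two steps on the whole of $\widecheck{\Omega}_D$ gives the claim there.
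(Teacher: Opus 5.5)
Your proposal is correct and follows essentially the same route as the paper: lift the identities of Remark~\ref{rmk:sliceproductofcomponents} through the natural-extension $\ast$-isomorphism, then evaluate pointwise via Example~\ref{ex:Im^2} and Proposition~\ref{prop:productofdualslicepointwise}, applied to the circular functions $\check f^\circ_s,\check f'_s,\check g^\circ_s,\check g'_s$ and to the dual slice-preserving left factors $\widecheck{\operatorname{Im}},\widecheck{\operatorname{Im}}^2$. Your explicit passage through $D\setminus\mathbb{R}$ in the general case, and your reading of $N(f)$ as $N(\check f)$, are slightly more careful than the paper, which states the lifted identities directly on $\widecheck{\Omega}_D$.
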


\begin{proof}
    Remark~\ref{rmk:sliceproductofcomponents} immediately implies that
    \begin{align*}
        \check f\cdot\check g&=\check f^\circ_s\cdot\check g^\circ_s+\widecheck{\operatorname{Im}^2}\cdot \check f'_s\cdot \check g'_s+\widecheck{\operatorname{Im}}\cdot \left(\check f^\circ_s\cdot\check g'_s+\check f'_s\cdot \check g^\circ_s\right)\,,\\
        \check f^c&=(\check f^\circ_s)^c+\widecheck{\operatorname{Im}}\cdot(\check f'_s)^c\,,\\
        N(\check f)&=N\left(\check f^\circ_s\right)+\widecheck{\operatorname{Im}^2}\cdot N\left(\check f'_s\right)+\widecheck{\operatorname{Im}}\cdot T\left(\check f^\circ_s\cdot(\check f'_s)^c\right)
    \end{align*}
    in $\widecheck{\Omega}_D$. The thesis now follows from the considerations made in Example~\ref{ex:Im^2} and from an application of Proposition~\ref{prop:productofdualslicepointwise} to the circular dual slice functions $\check f^\circ_s,\check f'_s,\check g^\circ_s,\check g'_s$.
\end{proof}

We now provide an explicit formula for $\check f\cdot\check g$. To this end, we first recall~\cite[Definition 2.10]{gstdualquaternions}.

\begin{defn}
We set
    \begin{equation*}\label{def:actiongen}
    \mathscr{C} :\mathbb{DH} \times \mathbb{DH} \longrightarrow \mathbb{DH}, \qquad (h,l)\longmapsto \mathscr{C}(h,l)\vcentcolon=
    \begin{cases}
        h^{-1}lh & \text{if\ }h\in\mathbb{DH}\setminus\epsilon\mathbb{H}\\
        h_2^{-1}lh_2 & \text{if\ }h\in\epsilon\mathbb{H}\setminus\{0\}\\
        l & \text{if\ }h=0
    \end{cases}
    \end{equation*}
and, for all $h,l\in\mathbb{DH}$, $\mathscr{C}_h(l):=\mathscr{C}(h,l)$.
\end{defn}

The same work~\cite{gstdualquaternions} proved that
\begin{equation}\label{eq:con}
    h\mathscr{C}_h(l)=lh
\end{equation}
for all $h,l \in \mathbb{DH}$ and that, for $l\in\mathbb{DH}$ fixed, $h_2^{-1}lh_2$ is a limit point of $h^{-1}lh$ as $h_1\to0$ and $l$ is a limit point of $h^{-1}lh$ as $h\to0$. It also stated a special case of the next lemma. We are going to use the commutator $[x_1,y_1]\coloneqq x_1y_1-y_1x_1$ of $x_1,y_1\in\mathbb{H}$, which equals $2\operatorname{Im}(x_1)\wedge\operatorname{Im}(y_1)$.

\begin{lem}\label{lem:C}
    Let $x=x_1+\epsilon x_2\in\mathbb{DH}$ and $h\in\mathbb{DH}$. Then $\mathscr{C}$ maps $\mathbb{DH}\times[x]$ surjectively into $[x]$ and the equalities $\widecheck{\operatorname{Re}}(\mathscr{C}_h(x))=\widecheck{\operatorname{Re}}(x),\widecheck{\operatorname{Im}}(\mathscr{C}_h(x))=\mathscr{C}_h(\widecheck{\operatorname{Im}}(x))$ and $T_{\mathscr{C}_h(x)}=T_{\mathscr{C}_h(x_1+\epsilon x_2^\parallel)}$ hold true. The equalities $\mathscr{C}_h(x)_1=\mathscr{C}_h(x_1),\mathscr{C}_h(x)_2=\mathscr{C}_h(x_2),\mathscr{C}_h(x)_2^\parallel=\mathscr{C}_h(x_2^\parallel),\mathscr{C}_h(x)_2^\perp=\mathscr{C}_h(x_2^\perp)$ hold true when $h_1=0$ or $h_2=0$. If, instead, $h_1\neq0\neq h_2$, then $\mathscr{C}_h(x)_1=\mathscr{C}_{h_1}(x_1),\mathscr{C}_h(x)_2=\mathscr{C}_{h_1}(x_2)+\left[\mathscr{C}_{h_1}(x_1),h_1^{-1}h_2\right],\mathscr{C}_h(x)_2^\parallel=\mathscr{C}_{h_1}(x_2^\parallel),\mathscr{C}_h(x)_2^\perp=\mathscr{C}_{h_1}(x_2^\perp)+\left[\mathscr{C}_{h_1}(x_1),h_1^{-1}h_2\right]$. 
\end{lem}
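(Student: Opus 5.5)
The plan is to reduce everything to the fact that $\mathscr{C}_h$ is a conjugation, hence a real algebra automorphism of $\mathbb{DH}$ commuting with $^c$, and then to handle the three cases of the definition separately. When $h\in\mathbb{DH}\setminus\epsilon\mathbb{H}$, the map $l\mapsto h^{-1}lh$ is an inner automorphism. It fixes $\mathbb{DR}$ pointwise, since $\mathbb{DR}$ is the center. Moreover $(h^{-1}lh)^c=h^cl^c(h^c)^{-1}$, and $h^c=n(h)h^{-1}$ with $n(h)\in\mathbb{DR}$ central and invertible, so $(h^{-1}lh)^c=h^{-1}l^ch$. The cases $h\in\epsilon\mathbb{H}^*$ and $h=0$ are similar: there $\mathscr{C}_h$ is conjugation by the quaternion $h_2$, or the identity. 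In every case we get $t(\mathscr{C}_h(x))=\mathscr{C}_h(t(x))=t(x)$ and $n(\mathscr{C}_h(x))=\mathscr{C}_h(n(x))=n(x)$. Hence $\mathscr{C}_h([x])\subseteq[x]$. Since $\widecheck{\operatorname{Re}}(x)=t(x)/2$ and $\widecheck{\operatorname{Im}}(x)=(x-x^c)/2$, the identities $\widecheck{\operatorname{Re}}(\mathscr{C}_h(x))=\widecheck{\operatorname{Re}}(x)$ and $\widecheck{\operatorname{Im}}(\mathscr{C}_h(x))=\mathscr{C}_h(\widecheck{\operatorname{Im}}(x))$ follow by linearity.

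For surjectivity of $\mathscr{C}:\mathbb{DH}\times[x]\to[x]$, I would take $h=1$: then $\mathscr{C}_1$ is the identity on $[x]$, so every element of $[x]$ is attained.

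Next I would compute the components. If $h_2=0$, or if $h=\epsilon h_2$, then $\mathscr{C}_h$ is conjugation by a single quaternion $q$. Such a conjugation acts componentwise: $(q^{-1}xq)_\ell=q^{-1}x_\ell q$. It also restricts to an orthogonal map of $\mathbb{H}$ fixing $1$, and it maps $\mathbb{C}_{J_1}$ onto $\mathbb{C}_{q^{-1}J_1q}$ and $\mathbb{C}_{J_1}^\perp$ onto $\mathbb{C}_{q^{-1}J_1q}^\perp$. This gives the $\parallel$ and $\perp$ statements. In the general case $h_1\neq0\neq h_2$, we have $h^{-1}=h_1^{-1}-\epsilon h_1^{-1}h_2h_1^{-1}$. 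Expanding $h^{-1}xh$ gives first component $h_1^{-1}x_1h_1$ and second component
\[h_1^{-1}x_2h_1+h_1^{-1}x_1h_2-h_1^{-1}h_2h_1^{-1}x_1h_1.\]
Writing $h_2=h_1(h_1^{-1}h_2)$ turns the last two terms into $\mathscr{C}_{h_1}(x_1)\,h_1^{-1}h_2-h_1^{-1}h_2\,\mathscr{C}_{h_1}(x_1)$, which is the commutator in the statement.

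The main step is the splitting into $\parallel$ and $\perp$ parts, which is taken with respect to $\mathbb{C}_{J_1'}$, where $J_1'=\mathscr{C}_{h_1}(J_1)$ is the unit determined by the new first component $\mathscr{C}_{h_1}(x_1)$. The commutator $[\mathscr{C}_{h_1}(x_1),q]=2\operatorname{Im}(\mathscr{C}_{h_1}(x_1))\wedge\operatorname{Im}(q)$ is a purely imaginary vector orthogonal to $\operatorname{Im}(\mathscr{C}_{h_1}(x_1))$, hence it lies in $\mathbb{C}_{J_1'}^\perp$. The term $\mathscr{C}_{h_1}(x_2)$ splits as $\mathscr{C}_{h_1}(x_2^\parallel)+\mathscr{C}_{h_1}(x_2^\perp)$, by the orthogonality of $\mathscr{C}_{h_1}$ noted above. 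The asserted $\parallel$ and $\perp$ formulas follow.

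For the case $x_1\in\mathbb{R}$, where $\parallel$ and $\perp$ are undefined, the statements are vacuous or trivial, and I would note this explicitly. Finally, $T_{\mathscr{C}_h(x)}$ depends only on the first component and on the $\parallel$ part of the second component. These coincide for $\mathscr{C}_h(x)$ and for $\mathscr{C}_h(x_1+\epsilon x_2^\parallel)$ by the formulas just proved, applied once to $x$ and once to $x_1+\epsilon x_2^\parallel$. Hence the two planes $T_{\mathscr{C}_h(x)}$ and $T_{\mathscr{C}_h(x_1+\epsilon x_2^\parallel)}$ are equal.
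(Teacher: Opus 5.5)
Your proposal is correct and follows essentially the paper's proof: the same expansion of $h^{-1}xh$ via $h^{-1}=h_1^{-1}-\epsilon h_1^{-1}h_2h_1^{-1}$, with the case $h=\epsilon h_2$ reduced to conjugation by the quaternion $h_2$. The differences are only in bookkeeping. The paper writes $x=a+Ib$ and reads the invariance of $\widecheck{\operatorname{Re}}$ and $\widecheck{\operatorname{Im}}$, and the $\parallel$ and $\perp$ parts, off the explicit expansion of $\mathscr{C}_h(a+Ib)$, using $\mathscr{C}_{h_1}(I_1)\perp\mathscr{C}_{h_1}(I_2)$. You instead get them from $\mathscr{C}_h$ being a $\ast$-automorphism fixing $\mathbb{DR}$ plus the cross-product form of the commutator, and you also make explicit the trivial surjectivity via $h=1$, which the paper leaves unstated.
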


\begin{proof}
    First assume $h=h_1+\epsilon h_2$ to be an invertible element of $\mathbb{DH}$. Since $h^{-1}=h_1^{-1}-\epsilon h_1^{-1}h_2h_1^{-1}$, we get
    \begin{align*}
    \mathscr{C}_h(x)&=\mathscr{C}_h(x_1)+\epsilon\mathscr{C}_h(x_2)=h^{-1}x_1h+\epsilon h^{-1}x_2h\\
        &=\left(h_1^{-1}-\epsilon h_1^{-1}h_2h_1^{-1}\right)(x_1h_1+\epsilon x_1h_2)+\epsilon h_1^{-1}x_2h_1\\
        &=h_1^{-1}x_1h_1+\epsilon\left(h_1^{-1}x_2h_1+h_1^{-1}x_1h_2-h_1^{-1}h_2h_1^{-1}x_1h_1\right)\\
        &=\mathscr{C}_{h_1}(x_1)+\epsilon\left(\mathscr{C}_{h_1}(x_2)+\mathscr{C}_{h_1}(x_1)h_1^{-1}h_2-h_1^{-1}h_2\mathscr{C}_{h_1}(x_1)\right)\\
        &=\mathscr{C}_{h_1}(x_1)+\epsilon\left(\mathscr{C}_{h_1}(x_2)+\left[\mathscr{C}_{h_1}(x_1),h_1^{-1}h_2\right]\right)\\
        &=\mathscr{C}_{h_1}(x)+\epsilon\left[\mathscr{C}_{h_1}(x_1),h_1^{-1}h_2\right]\,,
    \end{align*}
    whence $\mathscr{C}_h(x)_1=\mathscr{C}_{h_1}(x_1),\mathscr{C}_h(x)_2=\mathscr{C}_{h_1}(x_2)+\left[\mathscr{C}_{h_1}(x_1),h_1^{-1}h_2\right]$. In the special case when $h_2=0$, we obtain $\mathscr{C}_h(x)_1=\mathscr{C}_h(x_1),\mathscr{C}_h(x)_2=\mathscr{C}_{h_1}(x_2)+0=\mathscr{C}_h(x_2)$. Assuming $x=a+Ib$ with $a,b\in\mathbb{DR}$ and $I=I_1+\epsilon I_2\in\mathbb{S}_\mathbb{DH}$, we get $x_1=a_1+I_1b_1,x_2^\parallel=a_2+I_1b_2,x_2^\perp=I_2b_1$ and
    \begin{align*}
    \mathscr{C}_h(x)
        &=a_1+\mathscr{C}_{h_1}(I_1)b_1+\epsilon\left(a_2+\mathscr{C}_{h_1}(I_1)b_2+\mathscr{C}_{h_1}(I_2)b_1+\left[a_1+\mathscr{C}_{h_1}(I_1)b_1,h_1^{-1}h_2\right]\right)\,.
    \end{align*}
    Here, $\langle\mathscr{C}_{h_1}(I_1),\mathscr{C}_{h_1}(I_2)\rangle=t\left(\mathscr{C}_{h_1}(I_1)\mathscr{C}_{h_1}(I_2)^c\right)=t\left(\mathscr{C}_{h_1}(I_1I_2^c)\right)=t(I_1I_2^c)=\langle I_1,I_2\rangle=0$.    On the one hand, we obtain $\mathscr{C}_h(x)_2^\parallel=\mathscr{C}_{h_1}(x_2^\parallel),\mathscr{C}_h(x)_2^\perp=\mathscr{C}_{h_1}(x_2^\perp)+\left[\mathscr{C}_{h_1}(x_1),h_1^{-1}h_2\right],T_{\mathscr{C}_h(x)}=T_{\mathscr{C}_{h_1}(x_1)+\epsilon\mathscr{C}_{h_1}(x_2^\parallel)}=T_{\mathscr{C}_h(x_1+\epsilon x_2^\parallel)}$, which yield $\mathscr{C}_h(x)_2^\parallel=\mathscr{C}_h(x_2^\parallel),\mathscr{C}_h(x)_2^\perp+0=\mathscr{C}_{h_1}(x_2^\perp)=\mathscr{C}_h(x_2^\perp)$ in the special case when $h_2=0$. On the other hand, we obtain that $\widecheck{\operatorname{Re}}(\mathscr{C}_h(x))=a_1+\epsilon a_2=a=\widecheck{\operatorname{Re}}(x)$ and that $\widecheck{\operatorname{Im}}(\mathscr{C}_h(x))=\mathscr{C}_{h_1}(I_1b_1)+\epsilon\left(\mathscr{C}_{h_1}(I_1b_2+I_2b_1)+\left[\mathscr{C}_{h_1}(I_1b_1),h_1^{-1}h_2\right]\right)=\mathscr{C}_h(Ib)=\mathscr{C}_h(\widecheck{\operatorname{Im}}(x))$.

    Now assume $h=\epsilon h_2$, whence $\mathscr{C}_h=\mathscr{C}_{h_2}$ by~\eqref{def:actiongen}. We may apply the previous part of the proof to $\mathscr{C}_{h_2}$ to get $\mathscr{C}_h(x)_1=\mathscr{C}_{h_2}(x)_1=\mathscr{C}_{h_2}(x_1)=\mathscr{C}_h(x_1),\mathscr{C}_h(x)_2=\mathscr{C}_{h_2}(x)_2=\mathscr{C}_{h_2}(x_2)=\mathscr{C}_h(x_2),\mathscr{C}_h(x)_2^\parallel=\mathscr{C}_{h_2}(x)_2^\parallel=\mathscr{C}_{h_2}(x_2^\parallel)=\mathscr{C}_h(x_2^\parallel),\mathscr{C}_h(x)_2^\perp=\mathscr{C}_{h_2}(x)_2^\perp=\mathscr{C}_{h_2}(x_2^\perp)=\mathscr{C}_h(x_2^\perp),T_{\mathscr{C}_h(x)}=T_{\mathscr{C}_{h_2}(x)}=T_{\mathscr{C}_{h_2}(x_1+\epsilon x_2^\parallel)}=T_{\mathscr{C}_h(x_1+\epsilon x_2^\parallel)}$. Finally, $\widecheck{\operatorname{Re}}(\mathscr{C}_h(x))=\widecheck{\operatorname{Re}}(\mathscr{C}_{h_2}(x))=\widecheck{\operatorname{Re}}(x),\widecheck{\operatorname{Im}}(\mathscr{C}_h(x))=\widecheck{\operatorname{Im}}(\mathscr{C}_{h_2}(x))=\mathscr{C}_{h_2}(\widecheck{\operatorname{Im}}(x))=\mathscr{C}_h(\widecheck{\operatorname{Im}}(x))$. The proof is now complete.
\end{proof}

We are now ready for the announced expression of $\check f\cdot\check g$.

\begin{thm}\label{thm:pointwiseprod}
Let $\check{f},\check{g}\in\mathcal{DS}(\widecheck{\Omega}_D)$ and fix $x\in\widecheck{\Omega}_D$. If $x\in\widecheck{\Omega}_D\cap\mathbb{DR}$, then $(\check{f}\cdot \check{g})(x)=\check{f}(x)\check{g}(x)$. Now assume $x\in \widecheck{\Omega}_{D\setminus\mathbb{R}}$ or $f\in\mathcal{S}^{2,1}(\Omega_D)$. If $y,z\in[x]$ fulfill one of the following (mutually equivalent) conditions:
\begin{enumerate}
\item $y\check{f}(y)-\check{f}(y)z=0$;
\item $z\check{f}^c(z)-\check{f}^c(z)y=0$;
\end{enumerate}
then
\[(\check{f}\cdot \check{g})(y)=\check{f}(y)\check{g}(z)\,.\]
Condition {\it 1} is equivalent to $z=\mathscr{C}(\check{f}(y),y)$, when $\check{f}(y)$ is invertible; it is equivalent to $z\in T_{\mathscr{C}(\check{f}(y),y)}$, when $\check{f}(y)$ is a zero divisor; it is automatically fulfilled when $\check{f}(y)=0$. Similarly, condition {\it 2} is equivalent to $y=\mathscr{C}(\check{f}^c(z),z)$ when $\check{f}^c(z)$ is invertible;  it is equivalent to $y\in T_{\mathscr{C}(\check{f}^c(z),z)}$ when $\check{f}^c(z)$ is a zero divisor;  it is automatically fulfilled when $\check{f}^c(z)=0$.
\end{thm}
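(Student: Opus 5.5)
The real case $x\in\widecheck{\Omega}_D\cap\mathbb{DR}$ is item 3 of Proposition~\ref{prop:productofdualslicepointwise}, so we assume $x\in\widecheck{\Omega}_{D\setminus\mathbb{R}}$ or $f\in\mathcal{S}^{2,1}(\Omega_D)$. Then the representation formulas of Remark~\ref{rmk:representationformula} and Proposition~\ref{prop:dualsliceproductofcomponents} hold on the whole class $[x]$. We write $\alpha\coloneqq\check f^\circ_s(x)$, $\beta\coloneqq\check f'_s(x)$, $\gamma\coloneqq\check g^\circ_s(x)$, $\delta\coloneqq\check g'_s(x)$. These are constant on $[x]$ by Proposition~\ref{prop:circular}. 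Every $w\in[x]$ satisfies $\widecheck{\operatorname{Re}}(w)=a$ and $\widecheck{\operatorname{Im}}(w)=w-a$ with $a\in\mathbb{DR}$ central, and $\widecheck{\operatorname{Im}}(w)^2=-b^2\in\mathbb{DR}$ is the same for all $w\in[x]$ (Proposition~\ref{prop:classes}, Example~\ref{ex:Im^2}). Set $u\coloneqq\widecheck{\operatorname{Im}}(y)$ and $v\coloneqq\widecheck{\operatorname{Im}}(z)$. Then $u^2=v^2=-b^2$ is central, and $\check f(y)=\alpha+u\beta$, $\check f^c(z)=\alpha^c+v\beta^c$, $\check g(z)=\gamma+v\delta$.

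\textbf{Main identity.} Expanding,
\[\check f(y)\check g(z)=\alpha\gamma+\alpha v\delta+u\beta\gamma+u\beta v\delta,\]
while Proposition~\ref{prop:dualsliceproductofcomponents} gives
\[(\check f\cdot\check g)(y)=\alpha\gamma+u^2\beta\delta+u(\alpha\delta+\beta\gamma).\]
Their difference is $(\alpha v-u\alpha+u\beta v-u^2\beta)\delta=(\alpha v-u\alpha+u\beta v-u\,u\beta)\delta$. Since $y-z=u-v$ and $a$ is central, condition 1 reads $u(\alpha+u\beta)=(\alpha+u\beta)v$, that is, $u\alpha+u^2\beta=\alpha v+u\beta v$. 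The bracket therefore vanishes exactly under condition 1, and the product formula follows.

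\textbf{Equivalence of conditions 1 and 2.} Apply the antiautomorphism $^c$ to condition 1. Since $y^c=2a-y$ and $z^c=2a-z$ with $a$ central, we get $\check f(y)^c(2a-y)-(2a-z)\check f(y)^c=0$, i.e. $z\check f(y)^c-\check f(y)^cy=0$. Next we need $\check f(y)^c=\check f^c(z)$ whenever condition 1 holds, and this is the step I expect to be the main obstacle. By construction, $\check f(y)^c=\alpha^c+\beta^cu^c=\alpha^c-\beta^cu$, while $\check f^c(z)=\alpha^c+v\beta^c$. So we must show $v\beta^c=-\beta^cu$, or equivalently (after conjugating) $\beta v=u\beta$ whenever condition 1 holds.

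The plan is to rewrite condition 1 as $u\alpha-\alpha v=u\beta v-u^2\beta=u(\beta v-u\beta)$. This mixes the two unknown quantities, so it does not close the argument directly. A cleaner route is to work with the symmetric form $E\coloneqq z\,\check f^c(z)-\check f^c(z)\,y$ itself. Expanding $E$ with the same central-$u^2$ trick gives a bracket involving $\beta^c$ and $\alpha^c$. One then shows that $E$ and the conjugate of condition 1 differ by a multiple of $(\beta v-u\beta)$ that vanishes. If that manipulation stalls, I would instead prove the equivalence through the case analysis below, which describes both solution sets explicitly via $\mathscr{C}$ and matches them. Note that the product formula itself needs only condition 1, so it is unaffected by how this step is resolved.

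\textbf{Case analysis.} When $h\coloneqq\check f(y)$ is invertible, condition 1 says $h^{-1}yh=z$, i.e. $z=\mathscr{C}(h,y)$ by \eqref{eq:con}; Lemma~\ref{lem:C} confirms that $z\in[x]$. When $h\in\epsilon\mathbb{H}^*$, we write $h=\epsilon h_2$. Then $yh-hz=\epsilon(y_1h_2-h_2z_1)$, so condition 1 only constrains $z_1=h_2^{-1}y_1h_2=\mathscr{C}_h(y)_1$. Combined with $z\in[x]$, Lemma~\ref{lem:decompositionofscheck} and the description $T_{a+Jb}=a+bT_{J_1}$ identify this set with $T_{\mathscr{C}(h,y)}$. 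The case $h=0$ is trivial. The statements for condition 2 follow symmetrically.
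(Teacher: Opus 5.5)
\paragraph{Verdict.} Your proof of the product formula is correct, but the equivalence of conditions~1 and~2 has a genuine gap: the route you propose rests on a false claim.

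\paragraph{What matches the paper.} Under condition~1, the difference $\check f(y)\check g(z)-(\check f\cdot\check g)(y)=(\check f(y)v-u\check f(y))\delta$ vanishes, which is the paper's argument. Your case analysis for condition~1 also matches the paper's.

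\paragraph{The gap.} You plan to show that condition~1 forces $\check f(y)^c=\check f^c(z)$, i.e.\ $v\beta^c=-\beta^cu$. This is false in general. (Conjugating actually gives $\beta v=-u\beta$, not $\beta v=u\beta$, but neither holds.) Here is a counterexample.
\begin{itemize}
\item Take $\check f(x)=2+xj\in\mathbb{DH}[x]$ and $y=i$, so $\check f(y)=2+k$ is invertible.
\item Condition~1 then gives $z=\mathscr{C}(2+k,i)=\frac{3i-4j}{5}\in[i]$.
\item However, $\check f(y)^c=2-k$, whereas $\check f^c(z)=2-zj=\frac35(2-k)$.
\end{itemize}
For the same reason, $E$ and the conjugate of condition~1 do not differ by a multiple of $\beta v-u\beta$.

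Your fallback of ``matching the solution sets'' does not close the gap either. Proving that $z=\mathscr{C}(\check f(y),y)$ implies $y=\mathscr{C}(\check f^c(z),z)$ (or $\check f^c(z)=0$, or the $T$-version) is exactly the equivalence you are missing.

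\paragraph{The missing observation.} You never need to compare $\check f(y)^c$ with $\check f^c(z)$, because the two commutators agree identically. Your conjugation step is correct and yields
\[
v\check f(y)^c-\check f(y)^cu=v\alpha^c-v\beta^cu-\alpha^cu+\beta^cu^2 .
\]
Since $u^2=v^2\in\mathbb{DR}$ is central, $\beta^cu^2=v^2\beta^c$. The right-hand side therefore regroups as
\[
v(\alpha^c+v\beta^c)-(\alpha^c+v\beta^c)u=v\check f^c(z)-\check f^c(z)u .
\]
In your notation, $E$ minus the conjugate of condition~1 is $v^2\beta^c-\beta^cu^2=0$. Hence
\[
\bigl(y\check f(y)-\check f(y)z\bigr)^c=z\check f^c(z)-\check f^c(z)y\qquad\text{for all } y,z\in[x],
\]
and conditions~1 and~2 are equivalent, even though the middle factors $\check f(y)^c$ and $\check f^c(z)$ generally differ. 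This is precisely the chain of equalities in the paper's proof.
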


\begin{proof}
Using Remark~\ref{rmk:representationformula} and Proposition~\ref{prop:dualsliceproductofcomponents}, it is easy to see that
\[(\check{f}\cdot \check{g})(y)=\check{f}(y)\check{g}^\circ_s(x)+\widecheck{\operatorname{Im}}(y)\check{f}(y)\check{g}'_s(x)\]
for all $y,z\in[x]$. The displayed expression coincides with
\[\check{f}(y)\check{g}(z)=\check{f}(y)\check{g}^\circ_s(x)+\check{f}(y)\widecheck{\operatorname{Im}}(z)\check{g}'_s(x)\]
whenever $\widecheck{\operatorname{Im}}(y)\check{f}(y)=\check{f}(y)\widecheck{\operatorname{Im}}(z)$, which is equivalent to condition {\it 1} because $\widecheck{\operatorname{Re}}(y)=\widecheck{\operatorname{Re}}(z)\in\mathbb{DR}$. The latter condition is automatically true if $\check{f}(y)=0$. It is equivalent to $z=\check{f}(y)^{-1}y\check{f}(y)=\mathscr{C}(\check{f}(y),y)$ if $\check{f}(y)$ is invertible. It is equivalent to $\epsilon(y_1 h_2-h_2z_1)$, whence to $z_1=h_2^{-1}y_1h_2=\mathscr{C}(\check{f}(y),y_1)$, in case $\check{f}(y)=\epsilon h_2$ for some $h_2\in\mathbb{H}$. Assuming $y=a+Ib$ with $a,b\in\mathbb{DR},I=I_1+\epsilon I_2\in\mathbb{S}_\mathbb{DH}$ (whence $y_1=a_1+I_1b_1$), the elements $z$ of $[x]=[y]=a+b\mathbb{S}_\mathbb{DH}$ having $z_1=\mathscr{C}(\check{f}(y),y_1)=a_1+\mathscr{C}(\check{f}(y),I_1)b_1$ are exactly those with $z_2^\parallel=a_2+\mathscr{C}(\check{f}(y),I_1)b_2=\mathscr{C}(\check{f}(y),y_2^\parallel)$, i.e., those with $z\in T_{\mathscr{C}(\check{f}(y),y_1+\epsilon y_2^\parallel)}=T_{\mathscr{C}(\check{f}(y),y)}$. The last equality follows from Lemma~\ref{lem:C}.

We now prove that conditions {\it 1} and {\it 2} are equivalent. Since $\widecheck{\operatorname{Re}}(y)=\widecheck{\operatorname{Re}}(z)\in\mathbb{DR}$, condition {\it 1} is equivalent to
\[\widecheck{\operatorname{Im}}(y)\,(\check{f}^\circ_s(x)+\widecheck{\operatorname{Im}}(y)\check{f}'_s(x))-(\check{f}^\circ_s(x)+\widecheck{\operatorname{Im}}(y)\check{f}'_s(x))\,\widecheck{\operatorname{Im}}(z)=0\,.\]
Taking conjugates on both sides, we find that condition {\it 1} is equivalent to each of the following conditions:
\begin{align*}
    &-(\check{f}^\circ_s(x)^c-\check{f}'_s(x)^c\widecheck{\operatorname{Im}}(y))\,\widecheck{\operatorname{Im}}(y)+\widecheck{\operatorname{Im}}(z)\,(\check{f}^\circ_s(x)^c-\check{f}'_s(x)^c\widecheck{\operatorname{Im}}(y))=0\,,\\
    &\check{f}'_s(x)^c\,\widecheck{\operatorname{Im}}(y)^2+\widecheck{\operatorname{Im}}(z)\check{f}^\circ_s(x)^c-(\check{f}^\circ_s(x)^c+\widecheck{\operatorname{Im}}(z)\check{f}'_s(x)^c)\,\widecheck{\operatorname{Im}}(y)=0\,,\\
    &\widecheck{\operatorname{Im}}(z)^2\check{f}'_s(x)^c+\widecheck{\operatorname{Im}}(z)\,\check{f}^\circ_s(x)^c-\check{f}^c(z)\,\widecheck{\operatorname{Im}}(y)=0\,,\\
    &\widecheck{\operatorname{Im}}(z)\,\check{f}^c(z)-\check{f}^c(z)\,\widecheck{\operatorname{Im}}(y)=0\,,
\end{align*}
the latter being equivalent to condition {\it 2}. For the second equivalence, we used the fact that $\widecheck{\operatorname{Im}}(y)^2=\widecheck{\operatorname{Im}}(z)^2\in\mathbb{DR}$.
\end{proof}

%%%%%%%%%%%%%%%%%%%%%%%%%%%%%

\section{Explicit expressions of the extension map}\label{sec:dualsliceexpression}

Let us explicitly compute the extension map $\widecheck{\ }:\mathcal{S}^{1,0}(\Omega_D)\to\mathcal{DS}(\widecheck{\Omega}_D)$.

\begin{prop}
    Let $g\in\mathcal{S}_\mathbb{H}^1(\Omega_D),h\in\mathcal{S}_\mathbb{H}^0(\Omega_D)$ and consider $f\coloneqq g+\epsilon h\in\mathcal{S}^{1,0}(\Omega_D)$. Then
    \begin{align}\label{eq:dualslice}
        \check{f}(x)=f(\pi_Q(x))+(x-\pi_Q(x))\,\partial_cg(\pi_{Q}(x))+(x-\pi_Q(x))^c\,\overline{\partial}_cg(\pi_{Q}(x))
    \end{align}
    for all $x\in\widecheck{\Omega}_D$. If $f\in\mathcal{S}^1(\Omega_D)=\mathcal{S}^{1,1}(\Omega_D)$, then~\eqref{eq:dualslice} holds true even with $f$ in lieu of $g$. Finally, if $f\in\mathcal{SR}(\Omega_D)$, then $
        \check{f}(x)=f(\pi_Q(x))+(x-\pi_Q(x))\,\partial_cf(\pi_{Q}(x))$
    for all $x\in\widecheck{\Omega}_D$.
\end{prop}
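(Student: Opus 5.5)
The plan is to parametrize $x$ through the map $\Phi$ and compare both sides of~\eqref{eq:dualslice} directly with Definition~\ref{def:dualslice}. Write $x=a+Ib\in\widecheck{\Omega}_D$ with $a=a_1+\epsilon a_2$, $b=b_1+\epsilon b_2\in\mathbb{DR}$, $I\in\mathbb{S}_\mathbb{DH}$ and $a_1+e_1b_1\in D$. This is possible by Proposition~\ref{prop:decompositionintoDCs} and formula~\eqref{eq:scheck}. By the definition of $\pi_Q$ we have $\pi_Q(x)=a_1+Ib_1=\phi_I(z_1)$ with $z_1=a_1+e_1b_1$. Hence
\[
x-\pi_Q(x)=\epsilon(a_2+Ib_2)=\phi_I(\epsilon z_2),\qquad z_2=a_2+e_1b_2,
\]
and $(x-\pi_Q(x))^c=\epsilon(a_2-Ib_2)=\phi_I(\epsilon\bar z_2)$. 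The formula does not depend on the chosen decomposition, because $\pi_Q$ is well defined and $\check f$ is well defined.

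Next expand the left-hand side. By Definition~\ref{def:dualslice}, $\check f(x)=\phi_I(\check F(z_1+\epsilon z_2))$, where
\[
\check F(z_1+\epsilon z_2)=F(z_1)+\epsilon\Bigl(z_2\,\tfrac{\partial G}{\partial z}(z_1)+\bar z_2\,\tfrac{\partial G}{\partial\bar z}(z_1)\Bigr).
\]
Apply $\phi_I$ term by term. First, $\phi_I(F(z_1))=f(\phi_I(z_1))=f(\pi_Q(x))$. For the remaining terms we need the multiplicativity rule of Remark~\ref{rmk:phiJ}: $\phi_I(uv)=\phi_I(u)\phi_I(v)$ whenever $u$ has components in the center $\mathbb{DR}$. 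This applies to $u=\epsilon z_2$ and $u=\epsilon\bar z_2$, which lie in $\mathbb{DR}_\mathbb{C}$. Therefore
\[
\phi_I\Bigl(\epsilon z_2\,\tfrac{\partial G}{\partial z}(z_1)\Bigr)=(x-\pi_Q(x))\,\phi_I\Bigl(\tfrac{\partial G}{\partial z}(z_1)\Bigr)=(x-\pi_Q(x))\,\partial_cg(\pi_Q(x)),
\]
using $\partial_c g=\mathcal{I}(\partial G/\partial z)$. The same computation gives the $\overline{\partial}_c$ term with $(x-\pi_Q(x))^c$. This proves~\eqref{eq:dualslice}. The only delicate point is to check carefully that Remark~\ref{rmk:phiJ} is applied with the central factor on the left.

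For the second claim, suppose $f\in\mathcal{S}^{1,1}(\Omega_D)$. Then $\check F$ has the same expression with $F$ in place of $G$, as noted after Definition~\ref{def:extendedstem}. Equivalently, $\epsilon\cdot\epsilon\,\partial H=0$, so the $h$-contributions are annihilated by the factor $x-\pi_Q(x)\in\epsilon\mathbb{H}$. Thus $g$ may be replaced by $f$. For the last claim, take $f\in\mathcal{SR}(\Omega_D)$. Then $F$ is holomorphic, so $\overline{\partial}_cf\equiv0$, and the $\overline{\partial}_c$ term drops out. It remains to note that $\mathcal{SR}(\Omega_D)\subseteq\mathcal{S}^{1,1}$, which holds because holomorphic stem functions are $C^1$.
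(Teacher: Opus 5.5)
Your argument is correct and follows the same route as the paper. You write $x=a+Ib$, identify $\pi_Q(x)=a_1+Ib_1$ and $x-\pi_Q(x)=\epsilon(a_2+Ib_2)$, match the right-hand side of~\eqref{eq:dualslice} with $\check F_0(a+e_1b)+I\check F_1(a+e_1b)$, and dispose of the $\mathcal{S}^{1,1}$ and $\mathcal{SR}$ cases via $\epsilon\cdot\epsilon=0$ and $\overline{\partial}_cf\equiv0$, all exactly as the paper does. The only difference is bookkeeping: you push the Wirtinger form $F(z_1)+\epsilon\bigl(z_2\frac{\partial G}{\partial z}(z_1)+\bar z_2\frac{\partial G}{\partial \bar z}(z_1)\bigr)$ through $\phi_I$ using the multiplicativity of Remark~\ref{rmk:phiJ}, which applies because the left factors $\epsilon z_2,\epsilon\bar z_2$ have components in $\mathbb{DR}$. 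The paper instead expands $\partial_cg,\overline{\partial}_cg$ into the real partials of $G_0,G_1$ and regroups via~\eqref{eq:componentsofextendedstem}; your version is shorter and more structural.
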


\begin{proof}
    Assume $x=a+Ib$ with $a,b\in\mathbb{DR},I\in\mathbb{S}_\mathbb{DH}$, whence $\pi_Q(x)=a_1+Ib_1$ and $x-\pi_Q(x)=\epsilon(a_2+Ib_2)$. The definition $\overline{\partial}_cg\coloneqq\mathcal{I}(\frac{\partial G}{\partial \overline{z}})$ and~\eqref{eq:CR} imply that
    \begin{align*}
        \overline{\partial}_cg(a_1+Ib_1)&=\left(\frac{\partial G}{\partial \overline{z}}\right)_0(a_1+e_1b_1)+I\left(\frac{\partial G}{\partial \overline{z}}\right)_1(a_1+e_1b_1)\\
        &=\frac{1}{2}\left(\frac{\partial G_0}{\partial\alpha}-\frac{\partial G_1}{\partial\beta}\right)(a_1+e_1b_1)+\frac{I}{2}\left(\frac{\partial G_1}{\partial\alpha}+\frac{\partial G_0}{\partial\beta}\right)(a_1+e_1b_1)
    \end{align*}
    Similarly,
        \begin{align*}
        \partial_cg(a_1+Ib_1)
        &=\frac{1}{2}\left(\frac{\partial G_0}{\partial\alpha}+\frac{\partial G_1}{\partial\beta}\right)(a_1+e_1b_1)+\frac{I}{2}\left(\frac{\partial G_1}{\partial\alpha}-\frac{\partial G_0}{\partial\beta}\right)(a_1+e_1b_1)
    \end{align*}
    This allows us to compute
    \begin{align*}
        &f(\pi_Q(x))+(x-\pi_Q(x))\,\partial_cg(\pi_Q(x))+(x-\pi_Q(x))\,\overline{\partial}_cg(\pi_Q(x))\\
        &=f(a_1+Ib_1)+\epsilon(a_2+Ib_2)\,\partial_cg(a_1+Ib_1)+\epsilon(a_2-Ib_2)\,\overline{\partial}_cg(a_1+Ib_1)\\
        &=F_0(a_1+e_1b_1)+IF_1(a_1+e_1b_1)+\epsilon a_2\left(\frac{\partial G_0}{\partial\alpha}+I\frac{\partial G_1}{\partial\alpha}\right)(a_1+e_1b_1)\\
        &\quad+\epsilon I b_2\left(\frac{\partial G_1}{\partial\beta}-I\frac{\partial G_0}{\partial\beta}\right)(a_1+e_1b_1)\\
        &=F_0(a_1+e_1b_1)+\epsilon\left(a_2\,\frac{\partial G_0}{\partial \alpha}(a_1+e_1b_1)+b_2\,\frac{\partial G_0}{\partial \beta}(a_1+e_1b_1)\right)\\
        &\quad+I\left(F_1(a_1+e_1b_1)+\epsilon\left(a_2\,\frac{\partial G_1}{\partial \alpha}(a_1+e_1b_1)+b_2\,\frac{\partial G_1}{\partial \beta}(a_1+e_1b_1)\right)\right)\\
        &=\check{F}_0(a+e_1b)+I\check{F}_1(a+e_1b)=\check{f}(a+Ib)=\check{f}(x)\,.
    \end{align*}
    For the third-before-last equality, we used formula~\eqref{eq:componentsofextendedstem}. The first statement is now proven. For the second statement, it suffices to remark that $\epsilon\partial_cf=\epsilon\partial_cg$ and $\epsilon\overline{\partial}_cf=\epsilon\overline{\partial}_cg$ when $f\in\mathcal{S}^1(\Omega_D)$. To prove the third statement, we simply remark that $f\in\mathcal{SR}(\Omega_D)$ implies both $f\in\mathcal{S}^1(\Omega_D)$ and $\overline{\partial}_cf\equiv0$.
\end{proof}

A very important property of dual slice functions is provided by the next theorem. Let $dg_{x_1}$ denote the real differential of a quaternionic slice function $g$ at a point $x_1$ of its domain. We are going to prove that, for $x_1\in\Omega_D\cap(\mathbb{H}\setminus\mathbb{R})$ fixed, the map $\mathbb{H}\to\mathbb{DH},\ x_2\mapsto\check{f}(x_1+\epsilon x_2)$ is a real affine map whose value at $0$ equals $f(x_1)$ and whose real differential at $0$ acts exactly like $\epsilon \,d\left(^\pi f\right)_{x_1}$.

\begin{thm}\label{thm:dualslice}
    Let $\check f\in\mathcal{DS}(\widecheck{\Omega}_D)$ be the natural extension to $\widecheck{\Omega}_D$ of $f=g+\epsilon h$ with $g\in\mathcal{S}_\mathbb{H}^1(\Omega_D),h\in\mathcal{S}_\mathbb{H}^0(\Omega_D)$. If $x=x_1+\epsilon x_2\in\widecheck{\Omega}_D$ with $x_1\in\mathbb{H}\setminus\mathbb{R}$ and $x_2\in\mathbb{H}$, then
    \[\check{f}(x)= f(x_1)+\epsilon \,d\left(^\pi f\right)_{x_1}\!(x_2)\,.\]
    If $f\in\mathcal{S}^{3,0}(\Omega_D)$ then the previous formula holds true for all $x\in\widecheck{\Omega}_D$. This is automatically true if $\check f\in\mathcal{DS}^2(\widecheck{\Omega}_D)$ and, in particular, when $\check f\in\mathcal{DSR}(\widecheck{\Omega}_D)$.
\end{thm}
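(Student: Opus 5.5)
The plan is to start from formula~\eqref{eq:dualslice} in the preceding proposition and rewrite its right-hand side as $f(x_1)+\epsilon\,d(^\pi f)_{x_1}(x_2)$. Write $x_1=\alpha+J_1\beta$ with $J_1\in\mathbb{S}_\mathbb{H}$ and $\beta\neq0$. Decompose $x_2=x_2^\parallel+x_2^\perp$ with $x_2^\parallel\in\mathbb{C}_{J_1}$ and $x_2^\perp\in\mathbb{C}_{J_1}^\perp$.

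By Remark~\ref{rmk:Qprojection}, $\pi_Q(x)=x_1+\epsilon x_2^\perp\in T_{x_1}$ and $x-\pi_Q(x)=\epsilon x_2^\parallel$. Since $\epsilon$ annihilates the $\epsilon$-parts of dual quaternions, the terms $\epsilon x_2^\parallel\,\partial_cg(\pi_Q(x))$ and $\epsilon (x_2^\parallel)^c\,\overline{\partial}_cg(\pi_Q(x))$ only see the primal parts. These are $\partial_c(^\pi g)(x_1)$ and $\overline{\partial}_c(^\pi g)(x_1)$, by $\pi_\mathbb{H}\circ\varphi={}^\pi\!\varphi\circ\pi_\mathbb{H}$ and $\pi_\mathbb{H}\circ\pi_Q=\pi_\mathbb{H}$. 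Moreover $^\pi g={}^\pi\!f$, and $\partial_c$ commutes with taking primal parts because $\pi_{\mathbb{H}_\mathbb{C}}$ commutes with the Wirtinger operators.

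For the remaining term $f(\pi_Q(x))=f(x_1+\epsilon x_2^\perp)$, we have $x_1+\epsilon x_2^\perp\in\Omega_D\setminus\mathbb{R}$. Lemma~\ref{lem:decompositionusingsphericalderivative} then gives $f(x_1)+\epsilon x_2^\perp(^\pi\!f)'_s(x_1)$. Collecting terms,
\[\check f(x)=f(x_1)+\epsilon\left(x_2^\parallel\,\partial_c(^\pi\!f)(x_1)+(x_2^\parallel)^c\,\overline{\partial}_c(^\pi\!f)(x_1)+x_2^\perp\,\partial_s(^\pi\!f)(x_1)\right).\]
By Remark~\ref{rmk:differential}, applied to the quaternionic slice function $^\pi\!f\in\mathcal{S}^1(\Omega_{D,\mathbb{H}})$ at $p=x_1\notin\mathbb{R}$ with $v=x_2^\parallel$ and $w=x_2^\perp$, the bracket is exactly $d(^\pi\!f)_{x_1}(x_2)$. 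This proves the first claim.

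For real $x_1$ I assume $f\in\mathcal{S}^{3,0}(\Omega_D)$, so $G\in Stem^3$, $^\pi\!f\in\mathcal{S}^3\subset C^1$, and the last paragraph of Remark~\ref{rmk:differential} gives $d(^\pi\!f)_{x_1}(v)=v\,\partial_c(^\pi\!f)(x_1)+v^c\,\overline{\partial}_c(^\pi\!f)(x_1)$ for all $v\in\mathbb{H}$. Here $\pi_Q(x)=x_1$ and $x-\pi_Q(x)=\epsilon x_2$. Formula~\eqref{eq:dualslice} then yields
\[f(x_1)+\epsilon x_2\,\partial_cg(x_1)+\epsilon x_2^c\,\overline{\partial}_cg(x_1),\]
and the same primal-part reduction as before finishes this case.

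\textbf{Main obstacle.} The delicate point is bookkeeping at real points. The decomposition $v=\alpha+I\beta$ in Remark~\ref{rmk:differential} uses an arbitrary $I$, and the differential formula there needs $C^1$ regularity at the real axis, which is exactly why $\mathcal{S}^3$ is needed.

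For the final assertions, $\check f\in\mathcal{DS}^2$ means $f\in\mathcal{S}^{3,2}\subset\mathcal{S}^{3,0}$. If $\check f\in\mathcal{DSR}$, then $F$ is holomorphic, hence real analytic, so $f\in\mathcal{S}^{\omega,\omega}$.
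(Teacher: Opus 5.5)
Your proposal is correct and follows the paper's proof essentially step by step. Both start from formula~\eqref{eq:dualslice} with $\pi_Q(x)=x_1+\epsilon x_2^\perp$, use Lemma~\ref{lem:decompositionusingsphericalderivative} for the $x_2^\perp$-term, and use $\epsilon$-annihilation to replace $\partial_c g,\overline{\partial}_c g$ by those of $^\pi\!f$ at $x_1$. Both then conclude with Remark~\ref{rmk:differential}, invoking its last paragraph for real $x_1$ under $f\in\mathcal{S}^{3,0}(\Omega_D)$. You also make explicit two points the paper leaves implicit: that $\pi_{\mathbb{H}_\mathbb{C}}$ commutes with the Wirtinger operators, and that holomorphic stem functions are real analytic.
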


\begin{proof}
    We first consider the case $x_1\not\in\mathbb{R}$ and use the decomposition $x_2=x_2^\parallel+x_2^\perp$, which implies $\pi_Q(x)=x_1+\epsilon x_2^\perp$. By formula~\eqref{eq:dualslice}, we have $\check f(x)=f(x_1+\epsilon x_2^\perp)+\epsilon x_2^\parallel \partial_cg(x_1+\epsilon x_2^\perp)+\epsilon (x_2^\parallel)^c \overline{\partial}_cg(x_1+\epsilon x_2^\perp)$. On the one hand, Lemma \ref{lem:decompositionusingsphericalderivative} implies $f(x_1+\epsilon x_2^\perp)=f(x_1)+\epsilon x_2^\perp \partial_s {^\pi\!f}(x_1)$. On the other hand, $\epsilon\,\partial_cg(x_1+\epsilon x_2^\perp)=\epsilon\,\partial_c {^\pi\! g}(x_1)=\epsilon\,\partial_c {^\pi\! f}(x_1)$ and $\epsilon\,\overline{\partial}_cg(x_1+\epsilon x_2^\perp)=\epsilon\,\overline{\partial}_c {^\pi\! f}(x_1)$. Thus,
    \[\check f(x)= f(x_1)+\epsilon x_2^\perp \partial_s {^\pi\!f}(x_1)+\epsilon x_2^\parallel\,\partial_c {^\pi\! f}(x_1)+\epsilon(x_2^\parallel)^c\,\overline{\partial}_c {^\pi\! f}(x_1)= f(x_1)+\epsilon \,d\left(^\pi f\right)_{x_1}\!(x_2)\,,\]
    where the last equality follows from Remark~\ref{rmk:differential}. The first statement is now proven. To prove the second statement, under the hypothesis $x_1\in\mathbb{R}$, we apply formula~\eqref{eq:dualslice} to $f\in\mathcal{S}^{3,0}(\Omega_D)$ and use the equality $\pi_Q(x)=x_1$ to get $\check f(x)=f(x_1)+\epsilon x_2\,\partial_c g(x_1)+\epsilon x_2^c\,\overline{\partial}_c g(x_1)=f(x_1)+\epsilon x_2\,\partial_c {^\pi\! f}(x_1)+\epsilon x_2^c\,\overline{\partial}_c {^\pi\! f}(x_1)=f(x_1)+\epsilon \,d\left(^\pi f\right)_{x_1}\!(x_2)$, where the last equality follows from Remark~\ref{rmk:differential}. The fourth statement follows at once from the previous ones if we take into account that
    \[\mathcal{DSR}(\widecheck{\Omega}_D)\subset\mathcal{DS}^2(\widecheck{\Omega}_D)=\widecheck{\mathcal{S}^{3,2}(\Omega_D)}\,.\qedhere\]
\end{proof}

As a consequence of Theorem~\ref{thm:dualslice},
\begin{align}\label{eq:explicitdualslice}
    \check f(x)&= g(x_1)+\epsilon\left(h(x_1)+dg_{x_1}\!(x_2)\right)\,,\\
    \pi_\mathbb{H}(\check f(x))&=g(x_1)\,,\notag\\
    \pi_{\epsilon\mathbb{H}}(\check f(x))&=\epsilon\left(h(x_1)+dg_{x_1}\!(x_2)\right)\notag\\
    &=\epsilon\left(h(x_1)+\operatorname{Re}(x_2^\parallel)(\partial_c+\overline{\partial}_c)g(x_1)+\operatorname{Im}(x_2^\parallel)(\partial_c-\overline{\partial}_c)g(x_1)+x_2^\perp\partial_sg(x_1)\right)\,,\notag
\end{align}
for all $x=x_1+\epsilon x_2\in\widecheck{\Omega}_D$ with $x_1\not\in\mathbb{R}$. If $g\in\mathcal{S}^3_\mathbb{H}(\Omega_D)$, then the first three equalities hold even for $x_1\in\mathbb{R}$. If $g\in\mathcal{SR}_\mathbb{H}(\Omega_D)$, then~\eqref{eq:explicitdualslice} reads as $\check f(x)=g(x_1)+\epsilon\left(h(x_1)+x_2^\parallel g'_c(x_1)+x_2^\perp g'_s(x_1)\right)$ for $x_1\not\in\mathbb{R}$ and as $\check f(x)=g(x_1)+\epsilon\left(h(x_1)+x_2g'_c(x_1)\right)$ for $x_1\in\mathbb{R}$. We point out, for future use, that $\epsilon \check f(x)=\epsilon g(x_1)=\epsilon\, {^\pi\!f(x_1)}=\epsilon f(x_1)$.

%%%%%%%%%%%%%%%%%%%%%%%%%%%%%

\section{Dual quaternionic polynomials}\label{sec:dualslicepolynomials}

Another important property of dual slice functions is that they include right-sided dual quaternionic polynomials, which were our original interest.

\begin{thm}\label{thm:dualslicepolynomials}
    $\mathbb{DH}[x]$ and $\mathbb{DH}[x,x^c]$ are nested real $\ast$-subalgebras (as well as nested right $\mathbb{DH}$-submodules) of $\mathcal{DS}(\mathbb{DH})$. Moreover, $\mathbb{DH}[x]$ is a real $\ast$-subalgebra and a right $\mathbb{DH}$-submodule of $\mathcal{DSR}(\mathbb{DH})$. Namely, each polynomial $\sum_{\ell,m=0}^nx^\ell(x^c)^ma_{\ell m}$ with $\{a_{\ell m}\}_{\ell,m=0}^n\subset\mathbb{DH}$ is the image through $\mathscr{I}$ of the polynomial stem function $\mathbb{R}_\mathbb{C}\to\mathbb{DH}_{\mathbb{C}},\ z\mapsto \sum_{\ell,m=0}^nz^\ell\bar z^ma_{\ell m}$.
\end{thm}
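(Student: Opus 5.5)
The core is to verify the "Namely" clause; the algebraic statements then follow from the properties of $\mathscr{I}$. Fix $n\in\mathbb{N}$ and $\{a_{\ell m}\}\subset\mathbb{DH}$, and set $F(z)\coloneqq\sum_{\ell,m=0}^nz^\ell\bar z^ma_{\ell m}$ on $D=\mathbb{R}_\mathbb{C}$. By Remark~\ref{rmk:stempol}, $F\in Stem^{pol}(\mathbb{R}_\mathbb{C},\mathbb{DH}_\mathbb{C})\subset Stem^{1,0}(\mathbb{R}_\mathbb{C},\mathbb{DH}_\mathbb{C})$. Moreover $\check{D}=\mathbb{DR}_\mathbb{C}$ and $\widecheck{\Omega}_D=\widecheck{Q}_\mathbb{DH}=\mathbb{DH}$, so $\mathscr{I}(F)=\mathcal{I}(\check{F})$ is a function $\mathbb{DH}\to\mathbb{DH}$. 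By the remark following Example~\ref{ex:REIM}, $\check{F}(z)=\sum_{\ell,m}z^\ell\bar z^ma_{\ell m}$ for all $z\in\mathbb{DR}_\mathbb{C}$.

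Next, fix $x\in\mathbb{DH}$ and write $x=a+Ib=\phi_I(w)$ with $w=a+e_1b$, $a,b\in\mathbb{DR}$, $I\in\mathbb{S}_\mathbb{DH}$; this is possible by Proposition~\ref{prop:decompositionintoDCs}. Property 1 there says $\phi_I:\mathbb{DR}_\mathbb{C}\to\mathbb{DC}_I$ is a $\ast$-algebra isomorphism sending $\bar w$ to $x^c$. Hence $\phi_I(w^\ell\bar w^m)=x^\ell(x^c)^m$. Since $w^\ell\bar w^m\in\mathbb{DR}_\mathbb{C}$ has components $c_0,c_1$ in the center $\mathbb{DR}$, we get
\[\phi_I(w^\ell\bar w^m a_{\ell m})=c_0a_{\ell m}+Ic_1a_{\ell m}=\phi_I(w^\ell\bar w^m)\,a_{\ell m}=x^\ell(x^c)^ma_{\ell m}.\]
Summing over $\ell,m$, and noting that by Definition~\ref{def:dualslice} $\mathcal{I}(\check F)(x)=\phi_I(\check F(w))$, we obtain $\mathscr{I}(F)(x)=\sum x^\ell(x^c)^ma_{\ell m}$. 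So every element of $\mathbb{DH}[x,x^c]$, viewed as a function on $\mathbb{DH}$, lies in $\mathcal{DS}(\mathbb{DH})$. The same holds for $\mathbb{DH}[x]$, taking $a_{\ell m}=0$ for $m>0$.

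For the structures, $\mathscr{I}$ is a right $\mathbb{DH}$-module and $\ast$-algebra isomorphism. Example~\ref{exm:Spol} shows that $F\mapsto\sum x^\ell(x^c)^ma_{\ell m}$ intertwines the product and conjugation of $Stem^{pol}$ with those defined on $\mathbb{DH}[x,x^c]$. Therefore the $\cdot$ product and $^c$ on $\mathcal{DS}(\mathbb{DH})$ restrict to the formal operations of $\mathbb{DH}[x,x^c]$, and that set is closed under them. It is a $\ast$-subalgebra and right submodule, with $\mathbb{DH}[x]$ nested inside it.

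One step needs care: distinct formal polynomials must give distinct functions, or "subalgebra" is ambiguous. I would handle this by injectivity of $\mathscr{I}$ together with injectivity of $\{a_{\ell m}\}\mapsto F$. The latter holds because the monomials $z^\ell\bar z^m$ are linearly independent polynomial functions on $\mathbb{R}_\mathbb{C}$, and the module action is componentwise over a basis of $\mathbb{DH}$.

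Finally, for $\mathbb{DH}[x]\subset\mathcal{DSR}(\mathbb{DH})$, Example~\ref{exm:SRpol} gives that $F(z)=\sum z^\ell a_\ell$ is holomorphic, so $f=\mathcal{I}(F)\in\mathcal{SR}(Q_\mathbb{DH})$ and $\check f=\mathscr{I}(F)\in\mathcal{DSR}(\mathbb{DH})$ by Definition~\ref{def:dualslice2}. Closure under $\cdot$, $^c$ and right multiplication follows because $HolStem$ is a $\ast$-subalgebra and right submodule. The main obstacle is the pointwise evaluation step, specifically justifying that $\phi_I$ commutes with right multiplication by the non-central $a_{\ell m}$; the centrality of $\mathbb{DR}$ settles it.
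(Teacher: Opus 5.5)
Your proof is correct, but it follows a different route from the paper's. The paper identifies only two basic dual slice functions. It applies Theorem~\ref{thm:dualslice} to the identity and to the conjugation of $Q_\mathbb{DH}$, finding that their natural extensions are $x\mapsto x$ and $x\mapsto x^c$ on all of $\mathbb{DH}$. It then uses the pointwise-product property of Proposition~\ref{prop:productofdualslicepointwise}, valid because these functions are slice-preserving, to obtain the monomials $x^\ell(x^c)^m$. Finally, the right $\mathbb{DH}$-linearity of $\mathscr{I}$ attaches the coefficients, and compatibility of the $\ast$-structures is read off from Examples~\ref{exm:Spol} and~\ref{exm:SRpol}.

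You instead work at the level of stem functions. You take the closed formula $\check F(z)=\sum z^\ell\bar z^m a_{\ell m}$ on $\mathbb{DR}_\mathbb{C}$ from the remark after Example~\ref{ex:REIM}. You then push it through the $\ast$-isomorphism $\phi_I:\mathbb{DR}_\mathbb{C}\to\mathbb{DC}_I$ of Proposition~\ref{prop:decompositionintoDCs}, evaluating $\mathcal{I}(\check F)$ pointwise.

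Your route is more elementary and self-contained. It needs neither the differential formula of Theorem~\ref{thm:dualslice} nor the pointwise-product proposition. It treats every point of $\mathbb{DH}$ uniformly, including $\mathbb{R}+\epsilon\mathbb{H}$, where the representation $x=a+Ib$ is not unique. You also explicitly check that distinct coefficient families give distinct functions, which the paper leaves implicit. The paper's route is more structural: it reduces everything to two generators and showcases its main Theorem~\ref{thm:dualslice}.

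One small correction to your closing remark: pulling $a_{\ell m}$ out of $\phi_I$ needs no centrality at all. The identity $\phi_I(Fa)=\phi_I(F)\,a$ holds for every $F\in\mathbb{DH}_\mathbb{C}$ and $a\in\mathbb{DH}$ by associativity alone. Centrality of $\mathbb{DR}$ is what you actually use, via Remark~\ref{rmk:phiJ}, to make $\phi_I$ multiplicative on $\mathbb{DR}_\mathbb{C}$.
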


\begin{proof}
    Consider the inclusion $f:Q_\mathbb{DH}\to\mathbb{DH},\ x\mapsto x$ and the conjugation map $g:Q_\mathbb{DH}\to\mathbb{DH},\ x\mapsto x^c$. Now, $^\pi f:\mathbb{H}\to\mathbb{H}$ is the quaternionic identity map and $^\pi g:\mathbb{H}\to\mathbb{H}$ is quaternionic conjugation. By Theorem~\ref{thm:dualslice},
    \begin{align*}
        \check{f}(x_1+\epsilon x_2)&=x_1+\epsilon x_2\,,\\
        \check{g}(x_1+\epsilon x_2)&=x_1^c+\epsilon x_2^c=(x_1+\epsilon x_2)^c\,.
    \end{align*}
    In other words, $\check{f}:\mathbb{DH}\to\mathbb{DH}$ is the identity and $\check{g}:\mathbb{DH}\to\mathbb{DH}$ is conjugation $x\mapsto x^c$. By construction, $\check{f},\check{g}$ are slice-preserving. For all $\ell,m\in\mathbb{N}$, formula~\eqref{eq:specialproductofdualslice} yields that $x^\ell,(x^c)^m$ and their pointwise product $x^\ell(x^c)^m$ are all (slice-preserving) dual slice functions on $\mathbb{DH}$, induced by the stem functions  $z^\ell,(\bar z)^m$ and $z^\ell(\bar z)^m$ on $\mathbb{R}_\mathbb{C}$ (respectively).

 Since $\mathscr{I}$ is a right $\mathbb{DH}$-module isomorphism (mapping holomorphic stem functions into dual slice regular functions), it follows at once that $\mathbb{DH}[x]$ and $\mathbb{DH}[x,x^c]$ are nested right $\mathbb{DH}$-submodules of $\mathcal{DS}(\mathbb{DH})$ and that $\mathbb{DH}[x]$ is a right $\mathbb{DH}$-submodule of $\mathcal{DSR}(\mathbb{DH})$. Moreover, the real $\ast$-algebra structure we defined on $\mathbb{DH}[x]$ and $\mathbb{DH}[x,x^c]$ is compatible with the real $\ast$-algebra structure of $\mathcal{DS}(\mathbb{DH})$ because every dual slice function $\check{f}$ is uniquely determined by $f$ and because of the remarks made in Example~\ref{exm:Spol} and in Example~\ref{exm:SRpol}.
\end{proof}

Let us add a word about the slice-preserving, the dual slice-preserving and the quaternion-preserving cases among dual quaternionic polynomials. Thanks to the properties of $\mathscr{I}$,
\begin{align*}
    &\mathbb{R}[x,x^c]=\mathcal{DS}_{\mathbb{R}}(\mathbb{DH})\cap\mathbb{DH}[x,x^c]\,,&&\mathbb{R}[x]=\mathcal{DS}_{\mathbb{R}}(\mathbb{DH})\cap\mathbb{DH}[x]\,,\\
    &\mathbb{DR}[x,x^c]=\mathcal{DS}_{\mathbb{DR}}(\mathbb{DH})\cap\mathbb{DH}[x,x^c]\,,&&\mathbb{DR}[x]=\mathcal{DS}_{\mathbb{DR}}(\mathbb{DH})\cap\mathbb{DH}[x]\,,\\
    &\mathbb{H}[x,x^c]=\mathcal{DS}_{\mathbb{H}}(\mathbb{DH})\cap\mathbb{DH}[x,x^c]\,,&&\mathbb{H}[x]=\mathcal{DS}_{\mathbb{H}}(\mathbb{DH})\cap\mathbb{DH}[x]\,.
\end{align*}

A remark about quaternion-preserving components is in order.

\begin{rmk}\label{rmk:polynomialdecomposition}
    Consider the polynomial $\check{f}(x)=\sum_{\ell=0}^dx^\ell p_\ell\in\mathbb{DH}[x]$. After decomposing, for each $\ell\in\{0,\ldots,d\}$, the $\ell$-th coefficient as $p_\ell=q_\ell+\epsilon r_\ell$ with $q_\ell,r_\ell\in\mathbb{H}$, the quaternion-preserving components of $\check{f}$ are the elements $\check{g}(x)\coloneqq\sum_{\ell=0}^dx^\ell q_\ell,\check{h}(x)\coloneqq\sum_{\ell=0}^dx^\ell r_\ell$ of $\mathbb{H}[x]$. In particular, $\mathbb{DH}[x]=\mathbb{H}[x]+\epsilon\mathbb{H}[x]=\mathbb{H}[x]+\mathbb{H}[x]\epsilon$. Similar considerations apply to $\mathbb{DH}[x,x^c]$.
\end{rmk}

We conclude with a relevant example.

\begin{ex}\label{ex:decompositionofdeltacheck}
    Fix $y=y_1+\epsilon y_2\in\mathbb{DH}$ and set $\check{f}(x)\coloneqq x-y=x-y_1-\epsilon y_2$. We remark that $\check{f}^\circ_s(x)=\widecheck{\operatorname{Re}}(x)-y$ and $\check{f}'_s\equiv1$. Then $\check{f}^c(x)=x-y^c=x-y_1^c-\epsilon y_2^c$ and
    \begin{align*}
        N(\check{f})&=(x-y)\cdot(x-y^c)=x^2-x(y+y^c)+yy^c=x^2-xt(y)+n(y)\\
        &=x^2-xt(y_1)+n(y_1)+\epsilon(-xt(y_2)+t(y_1y_2^c))\in\mathbb{DR}[x]\,.
    \end{align*}
    We extend to all $y\in\mathbb{DH}$ the notation $\Delta_y(x)\coloneqq x^2-xt(y)+n(y)$ (for all $x\in Q_\mathbb{DH}$) set in Example~\ref{ex:deltaslice} and remark that $N(\check{f})=\check{\Delta}_y$. We also remark that $\check{\Delta}_y=\check{\Delta}_{y'}$ is equivalent to $y\sim y'$ and to $y'\in[y]$. The decomposition of Remark~\ref{rmk:polynomialdecomposition} takes the form
    \[\check{\Delta}_y(x)=\check{\Delta}_{y_1}(x)+\epsilon(-xt(y_2)+t(y_1y_2^c))\,,\]
    whence $^\pi\!\Delta_y(x_1)=\Delta_{y_1}(x_1)$ for all $x_1\in\mathbb{H}$. The following are equivalent:
    \begin{multicols}{3}
    \begin{enumerate}
        \item $y\in (\mathbb{R}+\epsilon\operatorname{Im}(\mathbb{H}))\cup Q_\mathbb{DH}$;
        \item $t(y_2)=0=t(y_1y_2^c)$;
        \item $\check{f}\in\mathcal{DS}_\mathbb{H}(\mathbb{DH})$;
        \item $\check{f}\in\mathcal{DS}_\mathbb{R}(\mathbb{DH})$;
        \item $\check{f}=\check{\Delta}_{y_1}$.
    \end{enumerate}
    \end{multicols}
    Condition {1} is the Study condition mentioned in Section~\ref{sec:introduction}. By Proposition~\ref{prop:dualsliceproductofcomponents}, 
    \begin{align*}
    (\check{\Delta}_y)^\circ_s(x)&=n\left(\check f^\circ_s(x)\right)+\widecheck{\operatorname{Im}}(x)^2\,n\left(\check f'_s(x)\right)=n\left(\widecheck{\operatorname{Re}}(x)-y\right)+\widecheck{\operatorname{Im}}(x)^2n(1)\\
    &=\widecheck{\operatorname{Re}}(x)^2+\widecheck{\operatorname{Im}}(x)^2-\widecheck{\operatorname{Re}}(x)t(y)+n(y)\,,\\
    (\check{\Delta}_y)'_s(x)&=t\left(\check f^\circ_s(x)\check f'_s(x)^c\right)=t\left(\widecheck{\operatorname{Re}}(x)-y\right)=2\widecheck{\operatorname{Re}}(x)-t(y)\,.
    \end{align*}
    Now, $\check{\Delta}_y=\mathscr{I}(F)=\mathcal{I}(\check{F})$ with $\check{F}(z)\coloneqq z^2-zt(y)+n(y)$. Furthermore, $\check{F}=\check{F}_0+e_1\check{F}_1$, where $\check{F}_0(a+e_1b)=a^2-b^2-at(y)+n(y)$ and $\check{F}_1(a+e_1b)=2ab-bt(y)$. We also have $(\Delta_{y_1})'_s(x_1)=2\operatorname{Re}(x_1)-t(y_1)$ and $(\Delta_{y_1})'_c(x_1)=2x_1-t(y_1)$. Thus, according to Theorem~\ref{thm:dualslice},
    \[\check{\Delta}_y(x)=
    \begin{cases}
            \Delta_{y}(x_1)+\epsilon\left(x_2^\parallel(2x_1-t(y_1))+x_2^\perp(2\operatorname{Re}(x_1)-t(y_1))\right)& \text{if\ }x_1\not\in\mathbb{R}\\
            \Delta_{y}(x_1)+\epsilon x_2(2x_1-t(y_1))& \text{if\ }x_1\in\mathbb{R}
        \end{cases}\,,\]
        while~\eqref{eq:explicitdualslice} gives
    \begin{align*}
        \pi_\mathbb{H}(\check{\Delta}_y(x))&=\Delta_{y_1}(x_1)\,,\\
        \pi_{\epsilon\mathbb{H}}(\check{\Delta}_y(x))&=
        \begin{cases}
            \epsilon\left(-x_1t(y_2)+t(y_1y_2^c)+x_2^\parallel(2x_1-t(y_1))+x_2^\perp(2\operatorname{Re}(x_1)-t(y_1))\right)& \text{if\ }x_1\not\in\mathbb{R}\\
            \epsilon\left(-x_1t(y_2)+t(y_1y_2^c)+x_2(2x_1-t(y_1))\right)& \text{if\ }x_1\in\mathbb{R}
        \end{cases}
    \end{align*}
    for all $x\in\mathbb{DH}$. We are going to use the last formulas to determine the zero set $V(\check{\Delta}_y)$. Specifically, we will prove that
    \[V(\check{\Delta}_y)=\begin{cases}
        [y]&\text{if }y\not\in\mathbb{R}+\epsilon\mathbb{H}\\
            H_{y_1}\supset[y]&\text{if }y\in\mathbb{R}+\epsilon\mathbb{H}
    \end{cases}\,.\]
    The equality $\pi_\mathbb{H}(\check{\Delta}_y(x))=\Delta_{y_1}(x_1)$ yields that $\check{\Delta}_y(x)$ can only vanish if $x_1\in V(\Delta_{y_1})=\mathbb{S}_{y_1}^\mathbb{H}$. If $y_1\in\mathbb{R}$, then $x_1\in\mathbb{S}_{y_1}^\mathbb{H}=\{y_1\}$ yields
    \begin{align*}
        \pi_{\epsilon\mathbb{H}}(\check{\Delta}_y(x))&=\epsilon\left(-y_1t(y_2)+t(y_1y_2^c)+x_2(2y_1-t(y_1))\right)\\
        &=\epsilon\left(y_1(-t(y_2)+t(y_2^c))+x_2(2y_1-2y_1)\right)=0\,,
    \end{align*}
    as desired. Now assume $x=A+IB,y=a+Jb$ for some $a,b,A,B\in\mathbb{DR},I,J\in\mathbb{S}_\mathbb{DH}$ (whence $t(y_2)=2a_2,t(y_1y_2^c)=2a_1a_2+2b_1b_2,x_2^\parallel=A_2+I_1B_2$), assume $y_1\not\in\mathbb{R}$ (i.e., $b_1\neq0$) and assume $x_1\in\mathbb{S}_{y_1}^\mathbb{H}$ (whence $A_1=a_1,B_1=b_1,x_1=a_1+I_1b_1,2\operatorname{Re}(x_1)=2a_1=t(y_1),\operatorname{Im}(x_1)=I_1b_1$): then
    \begin{align*}
        \pi_{\epsilon\mathbb{H}}(\check{\Delta}_y(x))&=\epsilon\left(-x_1t(y_2)+t(y_1y_2^c)+2x_2^\parallel\operatorname{Im}(x_1)\right)\\
        &=\epsilon\left(-2(a_1+I_1b_1))a_2+2a_1a_2+2b_1b_2+2(A_2+I_1B_2)I_1b_1\right)\\
        &=2\epsilon b_1\left(b_2-B_2+I_1(A_2-a_2)\right)
    \end{align*}
    vanishes if, and only if, $A_2=a_2,B_2=b_2$. This is the same as $x\in a+b\mathbb{S}_\mathbb{DH}=[y]$, as desired.
\end{ex}

%%%%%%%%%%%%%%%%%%%%%%%%%%%%%

\section{Concluding remarks}

In this work, we constructed a new class of functions from an open domain $\widecheck{\Omega}_D$ in the $\ast$-algebra $\mathbb{DH}$ of dual quaternions to $\mathbb{DH}$ itself: the class of dual slice functions $\mathcal{DS}(\widecheck{\Omega}_D)$. We provided this class with a real $\ast$-algebra structure and showed that it includes, as a $\ast$-subalgebra, the class $\mathbb{DH}[x]$ of dual quaternionic polynomials. The restriction of any element $\check{f}$ of $\mathcal{DS}(\widecheck{\Omega}_D)$ to the intersection $\Omega_D$ of its domain with the quadratic cone $Q_\mathbb{DH}$ is an element $f$ of $\mathcal{S}(\Omega_D)$, i.e., a dual quaternionic slice function according to~\cite{perotti}. The two respective $\ast$-algebra structures are compatible. A link to quaternionic slice functions was also established. The well-known properties of quaternionic and dual quaternionic slice functions (see~\cite{librospringer2,gstdualquaternions}) and the new results proven for dual slice functions thus become tools available to study $\mathbb{DH}[x]$.

We plan to develop in a forthcoming paper the study of zeros of dual slice functions and the related study of factorization in $\mathbb{DH}[x]$. Besides their intrinsic interest, dual quaternionic polynomials and their possible factorizations are important for their applications in mechanism science, developed in~\cite{hegedus,lischarlerschrocker,lischichoschrocker1,lischichoschrocker2,lischroeckerskopenkovscharler,pfurnerschroeckerhusty,schicho,siegelepfurnerschroecker,siegelescharlerschroecker} and overviewed in Section~\ref{sec:introduction}.

%%%%%%%%%%%%%%%%%%%%%%%%%%%%%

\section*{Acknowledgements}

The authors are supported by: INdAM, through GNSAGA, Finanziamento Premiale ``Splines for accUrate NumeRics: adaptIve models for Simulation Environments'', and Progetto GNSAGA ``Metodi reali in analisi e geometria ipercomplessa''; Universit\`a di Firenze, through Progetto ``Teoria delle funzioni ipercomplesse e applicazioni''; and MIUR, through PRIN 2022 ``Real and complex manifolds: geometry and holomorphic dynamics'' (2022AP8HZ9).\\\\
The authors gratefully acknowledge the anonymous reviewer's contribution to shaping the presentation of this work.

%%%%%%%%%%%%%%%%%%%%%%%%%%%%%

%% BIBLIOGRAPHY

%%%%%%%%%%%%%%%%%%%%%%%%%%%%%

\end{document}